\definecolor{darkred}{rgb}{0.5,0,0}
\definecolor{darkgreen}{rgb}{0,0.5,0}
\definecolor{darkblue}{rgb}{0,0,0.5}
\def\namedlabel#1#2{\begingroup
    #2%
    \def\@currentlabel{#2}%
    \phantomsection\label{#1}\endgroup
}
\numberwithin{equation}{section}
\setlist{nosep}
\setlist{noitemsep}
\newtheorem{theorem}{Theorem}
\newtheorem{proposition}{Proposition}[section]
\newtheorem{lemma}[proposition]{Lemma}
\newtheorem{corollary}[proposition]{Corollary}
\newtheorem{claim}[proposition]{Claim}
\theoremstyle{definition}
\newtheorem{definition}[proposition]{Definition}
\newtheorem{remark}[proposition]{Remark}
\newcommand{\R}{\mathbb{R}}
\DeclareMathOperator{\supp}{supp}
\DeclareMathOperator{\Id}{Id}
\DeclareMathOperator{\dist}{dist}
\DeclareMathOperator{\dive}{div}
\newcommand{\0}{\mathsf{0}}
\newcommand{\1}{\mathsf{1}}
\newcommand{\2}{\mathsf{2}}
\newcommand{\hal}{\frac{1}{2}}
\newcommand{\Var}{\mathrm{Var}}
\renewcommand{\u}{\mathsf{1}}
\newcommand{\dd}{\mathtt{d}}
\newcommand{\DD}{\mathfrak{D}}
\newcommand{\RN}{\mathrm{R}_N}
\newcommand{\Conf}{\mathrm{Conf}}
\newcommand{\Fluct}{\mathrm{Fluct}}
\newcommand{\LN}{\Sigma_N}
\newcommand{\XN}{\mathrm{X}_N}
\newcommand{\bXN}{\mathbf{X}_N}
\newcommand{\bX}{\mathbf{X}}
\newcommand{\Di}{\mathsf{Discr}}
\newcommand{\Points}{\mathsf{Pts}}
\newcommand{\feta}{\mathsf{f}_{\eta}}
\newcommand{\HH}{\mathfrak{h}}
\newcommand{\nHH}{\nabla \HH}
\newcommand{\KNbeta}{\mathrm{K}^{\beta}_{N}}
\newcommand{\PNbeta}{\mathbb{P}^{\beta}_{N}}
\newcommand{\E}{\mathbb{E}}
\newcommand{\EN}{\mathbb{E}^{\beta}_N}
\renewcommand{\Pr}{\mathcal{P}}
\newcommand{\F}{\mathsf{F}}
\newcommand{\FN}{\F_N}
\newcommand{\Cc}{\mathtt{C}}
\newcommand{\kk}{\mathsf{k}}
\renewcommand{\O}{\mathcal{O}}
\newcommand{\vv}{\mathsf{v}}
\newcommand{\vu}{\vec{v}}
\newcommand{\bLa}{\bar{\La}}
\newcommand{\FbL}{\mathsf{F}_{\bLa}}
\newcommand{\FL}{\mathsf{F}_{\Lambda}}
\newcommand{\KLbeta}{\mathrm{K}^{\beta}_{\Lambda}}
\newcommand{\KLbetap}{\mathrm{K}^{\beta,p}_{\Lambda}}
\newcommand{\x}{\mathsf{x}}
\newcommand{\PpNx}{\mathsf{P}^{\beta}_{N,\x}}
\newcommand{\PI}{\mathsf{P}^{\beta}_{\infty}}
\newcommand{\bXNx}{\mathbf{X}_{N,\x}}
\newcommand{\Pp}{\mathsf{P}}
\newcommand{\fL}{f_\Lambda}
\newcommand{\fNL}{f_{\Lambda, N}}
\newcommand{\fLp}{f_\Lambda^p}
\newcommand{\normf}{\Vert f \Vert_\infty}
\newcommand{\Bin}{\mathsf{Bin}}
\newcommand{\La}{\Lambda}
\newcommand{\ind}{\mathtt{1}}
\newcommand{\Leb}{\mathrm{Leb}}
\renewcommand{\epsilon}{\varepsilon}
\newcommand{\ff}{\mathsf{f}}
\newcommand{\veta}{\vec{\eta}}
\newcommand{\Ka}{\mathcal{K}}
\newcommand{\rr}{\mathsf{r}}
\newcommand{\nabxy}{\partial^2_{12}}
\newcommand{\Error}{\mathrm{Error}}
\newcommand{\EnerPts}{\mathsf{EnerPts}}
\newcommand{\Ani}{\mathsf{Ani}}
\newcommand{\pirk}{\frac{1}{(\pi r^2)^k}}
\begin{document}
\author{\large{Thomas Leblé\thanks{ Université de Paris-Cité, CNRS, MAP5 UMR 8145, F-75006 Paris, France \texttt{thomas.leble@math.cnrs.fr}}}}
\title{\LARGE{DLR equations, number-rigidity and translation-invariance for infinite-volume limit points of the 2DOCP}}
\date{\small{\today}}

\maketitle

\begin{abstract}
We prove that, at arbitrary positive temperature, every infinite-volume local limit point of the two-dimensional one-component plasma (2DOCP, also known as Coulomb or log-gas, or jellium) satisfies a system of Dobrushin-Lanford-Ruelle (DLR) equations - in particular, we explain how to rigorously make sense of those despite the long-range  interaction.

We also show number-rigidity and translation-invariance of the limiting processes. This extends results known for the infinite Ginibre ensemble. 

The proofs combine recent results on finite 2DOCP's and classical infinite-volume techniques. 
\end{abstract}

\section{Introduction}
Let $N \geq 1$ be an integer, let $\LN \subset \R^2$ be the disk of center $0$ and radius $\RN := \sqrt{\frac{N}{\pi}}$. Denote by $\XN := (x_1, \dots, x_N)$ a $N$-tuple of points in $\LN$ and let $\bXN := \sum_{i=1}^N \delta_{x_i}$ be the associated atomic measure. We define the \textit{logarithmic interaction energy} $\FN(\bXN)$ as:
\begin{equation}
\label{def:FN}
\FN(\bXN) := \hal \iint_{(x,y) \in \LN \times \LN, x \neq y} - \log |x-y| \dd \left(\bXN- \Leb\right)(x) \dd \left(\bXN- \Leb\right)(y),
\end{equation}
where $\Leb$ denotes the  Lebesgue measure on $\R^2$. We then define a probability density on the space of $N$-tuples of points in $\LN$ by setting, for $\beta > 0$:
\begin{equation}
\label{def:PNbeta}
\dd \PNbeta(\XN) := \frac{1}{\KNbeta} \exp\left( - \beta \FN(\bXN)\right) \dd \XN,
\end{equation}
where $\KNbeta$ is the \emph{partition function}, namely the normalizing constant:
\begin{equation}
\label{def:KNbeta}
\KNbeta := \int_{\LN \times \dots \times \LN}  \exp\left( - \beta \FN(\bXN) \right)  \dd \XN.
\end{equation}
Here and below, $\dd \XN$ denotes the product Lebesgue measure $\dd x_1 \dots \dd x_N$ on $(\R^2)^N$, which serves as the reference measure. The probability measure $\PNbeta$ is called the \textit{canonical Gibbs measure of the two-dimensional one-component plasma} (2DOCP) at \textit{inverse temperature} $\beta$. We denote the expectation under $\PNbeta$ by $\EN$.

For $\x$ in $\LN$, and if $\XN$ is sampled from $\PNbeta$, define $\bXNx$ (the local point configuration \emph{seen from $\x$}) as:
\begin{equation}
\label{def:bXNx}
\bXNx := \sum_{i=1}^N \delta_{x_i -\x}.
\end{equation}
In \cite[Corollary 1.1]{Armstrong_2021}, it is proven that for all (possibly $N$-dependent) choices of $\x$ \emph{in the bulk} of $\LN$, the law $\PpNx$ of $\bXNx$ converges as $N \to \infty$ \emph{up to extraction of a subsequence} to the law $\PI$ of some random infinite point configuration (we recall in Section \ref{sec:RecallAS} what is meant by “the bulk of $\LN$” and the sense in which this convergence holds). Our results consist in a description of those limit points.
\newcommand{\bXNz}{\bX_{N, \mathsf{0}}}
\subsubsection*{A quick introduction to the DLR framework.}
Note that, focusing on point \emph{configurations} instead of $N$-tuples, we can see the Gibbs measure of the 2DOCP as the law of $\bXN = \bXNz$ with density:
\begin{equation*}
\dd \Pp_{N,\mathsf{0}}^\beta(\bXNz) := \frac{1}{\KNbeta} \exp\left( - \beta \FN(\bXNz)\right) \dd \Bin_{\LN,N}(\bXNz),
\end{equation*}
where $\Bin$ is the reference binomial point process, sampling $N$ i.i.d. uniform points in $\LN$.

It is natural to wonder whether processes $\Pp$ arising in the large-$N$ limit can be described as Gibbs measures in their own right, with densities of the form (cf. \eqref{def:PNbeta}):
\begin{equation*}
\dd \Pp(\bX) \overset{??}{=} \frac{1}{\mathrm{Normalization}} \exp\left( - \beta \mathrm{Interaction\ Energy \ of \ } \bX \right) \dd \mathrm{ReferenceMeasure}(\bX).
\end{equation*}
For several reasons - among other things, because the total interaction energy of an infinite system is usually infinite - this is in general impossible to do. The correct framework, known as Dobrushin-Lanford-Ruelle (DLR) equations, consists in describing the local, conditional laws of $\Pp$ - i.e. the law, under $\Pp$, for each bounded set $\La$, of the local configuration $\bX_\La$ in $\La$ given the configuration $\bX_{\bLa}$ \emph{outside} $\La$ - as a bona fide finite-volume Gibbs measure. We refer to \cite[Chap.~6]{MR3752129} for a pedagogical presentation, or to \cite{MR2807681} for a thorough exposition of this theory. Heuristically speaking, proving DLR equations means showing that for all bounded Borel subsets $\La \subset \R^2$, using $\bLa$ to denote the complement $\bLa := \R^2 \setminus \La$,
\begin{multline}
\label{DLR_informal}
\text{The local conditional density } \dd \Pp(\bX_\La | \bX_{\bLa}) \\
\overset{?}{=} \frac{1}{\mathrm{Normalization}} \exp\left( - \beta \mathrm{Interaction \ of \ } \bX_\La \text{ with itself \emph{and} $\bX_{\bLa}$}  \right) \dd \mathrm{ReferenceMeasure}(\bX_\La).
\end{multline}
Obtaining \eqref{DLR_informal} requires first to give a proper meaning to the right-hand side, and in particular to the interaction of a given finite configuration $\bX_\La$, living in some bounded set $\La$, with the \emph{infinite} configuration $\bX_{\bLa}$ lying outside $\La$. For finite- or short-range interactions, this is usually not an issue, and the theory is well-developed in those cases. However, \emph{the Coulomb (here log-) interaction is long-range}, indeed:
\begin{equation*}
\int_{|x| \geq 1} \log |x| \dd x = + \infty, \quad \int_{|x| \geq 1} \| \nabla \log |x| \| \dd x = + \infty, \text{ and even } \int_{|x| \geq 1} \| \mathrm{Hess} \log |x| \| \dd x = + \infty,
\end{equation*}
which poses serious difficulties, both analytical and conceptual. As a matter of fact, the mere definition of DLR equations for the 2DOCP remained an open problem (see e.g. \cite[Sec. 12]{ghosh2017fluctuations}).

\subsection{Main results}
For $N \geq 2$, let $\x = \x(N) \in \LN$ be a sequence of zooming/centering points “in the bulk”, in the sense that:
\begin{equation}
\label{def:bulk}
\liminf_{N \to \infty} \frac{1}{\sqrt{N}} \dist(\x(N), \partial \LN) > 0,
\end{equation}
and \emph{let $\PI$ be any limit point for the law $\PpNx$ of $\bXNx$}, the process seen from $\x$ as defined in \eqref{def:bXNx}.
\setcounter{theorem}{-1}
\newcommand{\Lac}{\bar{\La}}
\newcommand{\FLa}{\mathsf{F}_\La}
\newcommand{\MLa}{\mathsf{M}_\La}

\paragraph{Existence of move functions - definition of DLR equations.}
If $\La \subset \R^2$ is a bounded subset, and $\bX'$ is a finite point configuration in $\La$, we define the \emph{local interaction energy} of $\bX'$ with itself (in $\La$) as the natural extension of \eqref{def:FN}, namely:
\begin{equation}
\label{def:FLa}
\FLa(\bX') := \hal \iint_{(x,y) \in \La \times \La, x \neq y} -\log|x-y| \dd \left( \bX' -  \Leb\right)(x) \dd \left(\bX' - \Leb \right)(y),
\end{equation}
which is well-defined and takes values in $(-\infty, + \infty]$.

In order to make sense of DLR equations, we establish the following theorem, which gives a meaning to the interaction of $\bX'$ (in $\La$) with an exterior configuration $\bX$ (in $\bLa$).
\begin{theorem}
\label{theo:zero}
Let $\chi = (\chi_i)_{i \geq 0}$ be some fixed smooth partition of unity in $\R^2$ living on dyadic scales (see Section \ref{sec:notation} for details).

For all bounded Borel subset $\La \subset \R^2$, for $\PI$-a.e. $\bX$ and for all configuration $\bX'$ in $\La$ \emph{such that $\bX'$ has the same number of points as $\bX$ in $\La$}, the following quantity
\begin{equation}
\label{eq:MLa}
\MLa(\bX', \bX) := \iint_{\La \times \Lac} -\log|x-y| \dd  \left(\bX' - \bX_\La \right)(x) \dd \left( \bX_{\Lac} - \Leb \right)(y)
\end{equation}
exists and is finite in the sense that:
\begin{equation}
\label{eq:MLa2}
\iint_{\La \times \Lac} -\log|x-y| \dd  \left(\bX' - \bX_\La \right)(x) \sum_{i=0}^p \chi_i(y) \dd \left(\bX_{\Lac} - \Leb \right)(y)
\end{equation}
converges as $p \to \infty$ to a finite quantity, which we denote by $\MLa(\bX', \bX)$.
\end{theorem}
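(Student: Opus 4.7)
The plan is to reduce the statement to a summable estimate on dyadic annuli. Write $\mu := \bX' - \bX_\La$: the hypothesis $|\bX'| = |\bX_\La|$ makes $\mu$ a signed measure with \emph{zero total mass}, finitely supported in $\La$. Setting $g(y) := \int_\La -\log|x-y|\, d\mu(x)$ and $\Psi_i(y) := g(y)\chi_i(y)$, the claim reduces to absolute convergence of the series $\sum_i \int \Psi_i\, d(\bX_{\Lac} - \Leb)$.

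\textbf{Step 1 (decay of $g$).} For $y$ outside a bounded neighborhood of $\La$, a Taylor expansion of $x \mapsto -\log|x-y|$ around $x=0$ yields $|g(y)| \leq C/|y|$ and $|\nabla g(y)| \leq C/|y|^2$, the monopole term vanishing by $\int d\mu = 0$ and the constants depending only on $\La$ and $n := |\bX_\La|$. Combining with the dyadic structure of $\chi_i$, each $\Psi_i$ is smooth, essentially supported on $\{|y| \asymp 2^i\}$, and satisfies $\|\Psi_i\|_\infty = O(2^{-i})$ together with $\|\nabla \Psi_i\|_\infty = O(2^{-2i})$.

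\textbf{Step 2 (fluctuation bound on each annulus).} A naive triangle inequality replacing $\Psi_i$ by $\|\Psi_i\|_\infty$ and summing over $\bX$ and $\Leb$ separately gives $O(2^i)$ per annulus, which is not summable: the near-cancellation in $\bX - \Leb$ must be exploited. My plan is to invoke the local law behind $\CLL$ (Lemma~\ref{lem:points}), either as a pointwise discrepancy bound $|\bX(B_R) - \pi R^2| = O(R^{1-\delta})$ or, in its smoothed form, $|\int \varphi\, d(\bX - \Leb)| \lesssim R^{1-\delta}(\|\varphi\|_\infty + R\|\nabla\varphi\|_\infty)$ for $\varphi$ supported in $B_R$. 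Applied with $R = 2^i$ and $\varphi = \Psi_i$, this yields $|\int \Psi_i\, d(\bX_{\Lac} - \Leb)| = O(2^{-i\delta})$, hence summability.

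\textbf{Step 3 and main obstacle.} The above produces, for each fixed $\bX'$, a $\PI$-full set on which the series converges. Since the estimates are locally uniform in the positions of $\bX'$, approximation by a countable dense family of configurations of cardinality $n$ extends the conclusion to \emph{all} admissible $\bX'$ outside a single $\PI$-null set. The crux is establishing the fluctuation bound of Step 2 $\PI$-almost surely: since $\PI$ is only known as a subsequential weak limit of $\PpNx$, this requires transferring a sufficiently strong finite-$N$ discrepancy estimate from \cite{Armstrong_2021} to the limit, most likely via a Borel--Cantelli argument applied to uniform-in-$N$ tail bounds.
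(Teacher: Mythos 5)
There is a genuine gap in Step 2: the postulated ``local law'' bound
\begin{equation*}
\left|\int \varphi\, \dd(\bX - \Leb)\right| \lesssim R^{1-\delta}\left(\|\varphi\|_\infty + R\|\nabla\varphi\|_\infty\right)
\end{equation*}
is not available, and is in fact strictly stronger than what the finite-$N$ estimates provide. The discrepancy control from \cite{Armstrong_2021} (Lemma~\ref{lem:points}, or \eqref{AS119}) gives typical $|\Di(\bX,\DD_\ell)| = O(\ell)$ (Poisson scaling), not $O(\ell^{1-\delta})$, and the corresponding Lipschitz fluctuation bound (Lemma~\ref{lem:fluct_Lipschitz}) gives $\E|\Fluct[\varphi]| = O(\ell)$ for $\ell^{-1}$-Lipschitz $\varphi$ supported at scale $\ell$ — again no power saving. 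Hyperuniformity of the form $\Di = o(\ell)$ is a deep result of \cite{leble2021two}, stated with unspecified rate and essentially for indicator functions of disks; the paper resorts to it only once, for the single ``last layer'' hitting the hard edge (Lemma~\ref{lem:SmoothHardWall}), and explicitly does not rely on any uniform $O(R^{1-\delta})$ gain. Plugging your proposed bound into Step 2 with $R=2^i$ does produce $O(2^{-i\delta})$ per annulus, but the input you are invoking does not exist.

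What you are missing is that the summability does not come from discrepancy/hyperuniformity estimates for Lipschitz statistics — it comes from the sharp $O(1)$ control of fluctuations for \emph{$C^3$-smooth} statistics (Lemma~\ref{lem:fluct_smooth} / \cite{serfaty2020gaussian}). The functions $\Psi_i = g\chi_i$ you constructed are not merely Lipschitz; they have derivatives of all orders up to $3$ decaying as $|\Psi_i|_\kk = O(2^{-i(\kk+1)})$, so the smooth CLT-type bound gives $\E|\Fluct[\Psi_i]| = O(2^{-i})$ directly, which is summable. The subtlety the paper then has to handle (and which motivates the decomposition $\phiix(y) = \langle x, J_i(y)\rangle + \Rem_i(x,y)$) is that the smooth bound is stated function-by-function, not uniformly over a class: the leading ``dipole'' term $J_i$ is independent of $\bX'$ so one can apply the smooth bound once, while the remainder $\Rem_i(x,\cdot)$ depends on $x$ but carries an extra prefactor $2^{-2i}$, so the cruder \emph{uniform} Lipschitz bound $O(2^i)$ suffices there. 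Your density argument in Step 3 is a plausible alternative way to obtain uniformity in $\bX'$, and your Step 1 Taylor expansion exploiting the neutrality of $\mu$ is exactly the right cancellation; but without recognizing that the $C^3$ smoothness is what must be exploited (and without the $J_i$/$\Rem_i$ split or something playing the same role), the argument cannot close.
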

We refer to $\MLa$ as the “total move function” (following the terminology of \cite{dereudre2021dlr}), as it can be thought of as the energy cost corresponding to moving the points from $\bX$ to $\bX'$ in $\La$. Note that, for a reason that will appear later, the first integration in \eqref{eq:MLa}, \eqref{eq:MLa2} is done against the signed measure $\bX' - \bX_\La$ (which by assumption is “neutral” because $\bX'$ and $\bX$ are supposed to have the same number of points in $\La$) instead of the signed measure $\bX' - \Leb$ as e.g. in \eqref{def:FLa}.

\textbf{The DLR equations for the 2DOCP.} We define $\Conf$ as the set of (locally finite) point configurations on $\R^2$ (see Section \ref{sec:pointconfig} for details). The DLR equations for the 2DOCP interaction are then phrased as follows: we say that a point process $\Pp$ satisfies \eqref{DLR} when: 
\begin{multline}
\tag{DLR}
\label{DLR}
\text{For all bounded Borel $\La \subset \R^2$ and for all bounded function $f$ on $\Conf$,} \\
\E_{\Pp}[f] = \E_{\Pp} \left[\frac{1}{\KLbeta(\bX)} \int_{\bX' \in \Conf(\La, \Points(\bX, \La))} f\left(\bX' \cup \bX_{\bLa} \right) e^{- \beta\left( \FLa(\bX') + \MLa(\bX', \bX)  \right) } \dd \Bin_{\La, \Points(\bX,\La)}(\bX') \right],
\end{multline}
where:
\begin{itemize}
    \item $\bX$ is the random variable sampled under $\Pp$, $\Points(\bX, \La)$ is its number of points in $\La$.
    \item $\Conf(\La, n)$ is the set of point configurations in $\La$ which have exactly $n$ points (here we choose $n = \Points(\bX, \La)$).
    \item $\bX' \cup \bX_{\bLa}$ denotes the point configuration obtained by combining the points of $\bX'$ in $\La$ and the points of $\bX$ in $\bLa$.
    \item $\FLa$ (the $\La$-$\La$ interaction) is as in \eqref{def:FLa} and $\MLa$ (the $\La$-$\bLa$ interaction) is as in \eqref{eq:MLa}.
    \item $\Bin_{\La, n}$ is the \emph{binomial point process}, sampling $n$ i.i.d. uniformly distributed points in $\La$ (here we choose $n = \Points(\bX, \La)$).
    \item $\KLbeta(\bX)$ is the relevant partition function, namely:
\begin{equation*}
\KLbeta(\bX) := \int_{\bX' \in \Conf(\La, \Points(\bX, \La))} e^{- \beta\left( \FLa(\bX') + \MLa(\bX', \bX)  \right) } \dd \Bin_{\La, \Points(\bX, \La)}(\bX') .
\end{equation*}
\end{itemize}
Point processes that satisfy \eqref{DLR} are called \emph{infinite Gibbs measures} or \emph{Gibbs states} (relative to the “specification” i.e. the data of $\FLa, \MLa$... see \cite{MR2807681}).

\paragraph{Proving DLR equations for the limit points.}
The main result of this paper is the fact that weak limit points of the 2DOCP are solutions to the DLR equations.
\begin{theorem}
\label{theo:DLR}
$\PI$ satisfies \eqref{DLR}.
\end{theorem}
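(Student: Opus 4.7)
The strategy is the standard finite-to-infinite-volume argument combined with a careful truncation of the long-range logarithmic tail. I would begin by writing down the finite-$N$ analog of \eqref{DLR}, which is satisfied by $\PNbeta$ \emph{by construction}. Fix a bounded Borel $\La$ and take $N$ large enough that $\La$ sits inside the bulk of $\LN$. Decomposing $\FN(\bXN)$ into the three natural pieces — the $\La$-$\La$ self-interaction, the $(\LN\setminus\La)$-$(\LN\setminus\La)$ self-interaction, and the cross term — and conditioning on the exterior points together with the number $n$ of points in $\La$, the exterior self-interaction cancels against the partition function. The neutrality $\int_\La \dd(\bX' - \bX_{N,\La}) = 0$ available under this conditioning lets one rewrite the cross term in the $\MLa$-form of \eqref{eq:MLa}, restricted to $\La \times (\LN\setminus\La)$ (call it $\MLa^{(N)}$), up to a $\bX'$-independent constant that again drops out. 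The resulting finite-$N$ DLR identity is exact.

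The next step is to pass to the limit $N \to \infty$ by inserting the partition of unity $\chi = (\chi_i)_{i\geq 0}$. Set
\[
\MLa^{(p)}(\bX', \bX) := \iint_{\La \times \bLa} -\log|x-y| \dd(\bX' - \bX_\La)(x) \sum_{i=0}^p \chi_i(y) \dd(\bX_{\bLa} - \Leb)(y).
\]
For fixed $p$ the map $(\bX', \bX) \mapsto \MLa^{(p)}(\bX', \bX)$ depends on $\bX$ only through its restriction to a bounded neighborhood of $\La$, and is continuous in the topology of local convergence of configurations; the same holds for the full integrand of the right-hand side of \eqref{DLR} with $\MLa$ replaced by $\MLa^{(p)}$. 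For $N$ so large that $\supp(\sum_{i=0}^p \chi_i) \subset \LN \setminus \La$, one has $\MLa^{(N)} = \MLa^{(p)} + \Error^{(p,N)}$, where $\Error^{(p,N)}$ gathers the contributions from scales $> 2^p$. Plugging this into the finite-$N$ DLR identity, testing against an arbitrary bounded measurable $f$ depending on $\bX$ through finitely many scales, and applying the weak convergence $\PpNx \to \PI$ yields, for each fixed $p$, the \eqref{DLR} identity for $\PI$ with $\MLa^{(p)}$ in place of $\MLa$, modulo the residual produced by $\Error^{(p,N)}$.

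The main obstacle — and the technical heart of the argument — is controlling $\Error^{(p,N)}$ uniformly in $N$ and in the integration variable $\bX'$ (which ranges over configurations in $\La$ with exactly $n = \Points(\bXN,\La)$ points). The neutrality of $\bX' - \bX_\La$ is crucial here: integrating by parts in the $x$ variable converts $-\log|x-y|$ into an electric-field type kernel acting on the exterior discrepancy $\bX_{\bLa} - \Leb$ restricted to dyadic annuli of inner radius $\sim 2^p$. Quantitative control of such electric-field contributions under $\PpNx$, uniform in $N$, is precisely what is provided by the screening and rigidity estimates of \cite{Armstrong_2021}, and they transfer to $\PI$ through lower semicontinuity of the relevant energies. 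These bounds show that $\Error^{(p,N)} \to 0$ as $p \to \infty$ in $L^1(\PpNx)$ uniformly in $N$, and also in $L^1(\PI)$. Finally, Theorem \ref{theo:zero} provides the $\PI$-a.s. convergence $\MLa^{(p)} \to \MLa$, and a dominated convergence argument (again powered by the same field estimates, which simultaneously control the associated partition-function ratios) allows one to send $p \to \infty$ in the truncated DLR identity for $\PI$, recovering \eqref{DLR}.
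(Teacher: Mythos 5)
Your outline tracks the paper's proof at every structural step: exact finite-$N$ canonical DLR equations from the decomposition of $\FN$ and neutrality of $\bX'-\bX_\La$; truncation of $\MLa$ to dyadic scales $\leq 2^p$; passage to the limit for fixed $p$ using locality of the truncated density; and control of the truncation error to send $p\to\infty$. The paper organizes this as a telescoping of $\E_{\PI}[f]-\E_{\PI}[f_\La]$ into differences handled by the truncation lemma, the finite-$N$ DLR identity, and the (local, exp-tame) convergence of Section~\ref{sec:prelimINF}, with a final monotone class argument to pass from local $f$ to all bounded measurable $f$.

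There is, however, a genuine gap in the step where you control $\Error^{(p,N)}$ uniformly. You claim this is ``precisely what is provided by the screening and rigidity estimates of \cite{Armstrong_2021}.'' That is true for the intermediate dyadic layers: the paper Taylor-expands the kernel $-\log|x-y|+\log|y|$ into a smooth term $\langle x, J_i\rangle$ (controlled by the smooth CLT, fluctuation $\O(2^{-i})$) plus a Lipschitz remainder $\Rem_i$ with a $2^{-2i}$ prefactor (controlled by the Lipschitz local laws). But the \emph{last} dyadic layer, where $\supp\chi_i$ intersects $\partial\LN$, is not covered by these estimates: the hard wall forces the test function $\varphi_{i,x}\,\1_{\LN}$, which is no longer smooth, so the smooth CLT does not apply, and the Lipschitz estimate only gives a fluctuation of size $\O(\sqrt{N})$, which after multiplying by the prefactor $N^{-1/2}$ yields $\O(1)$ rather than the needed $o(1)$. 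The paper closes this gap with Lemma~\ref{lem:SmoothHardWall}, a refinement valid only for \emph{radially symmetric} Lipschitz functions hitting the hard edge, whose proof relies on the difficult hyperuniformity estimate of \cite{leble2021two} (and the radial symmetry of $J_i$ is exactly why this refinement applies). Without that additional input — or an alternative such as replacing the hard wall by a soft confinement — the argument does not close; see the discussion in Section~\ref{sec:LastLayer}. Your claim that the error tends to $0$ ``in $L^1(\PpNx)$ uniformly in $N$'' is in fact stronger than what the paper proves (an iterated limit $\lim_p\lim_N$ suffices and is what is shown), and the stronger form is not justified.

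A minor point: ``applying the weak convergence'' and ``continuity in the topology of local convergence of configurations'' glosses over the fact that weak convergence on $\Pr(\Conf)$ is not enough — the truncated conditional density $f_\La^p$ is local but discontinuous for the vague topology, and one must upgrade to the exp-local topology as done in Section~\ref{sec:PrecisionAS}, using the correlation-function bounds of \cite{thoma2022overcrowding}.
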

This gives a “physical” meaning to these processes, whose existence was obtained in \cite{Armstrong_2021} by an abstract compactness argument. It also opens the possibility to use techniques related to DLR equations in order to study those processes, see for example Theorem \ref{theo:TI}.

\paragraph{Number-rigidity.}
Our \eqref{DLR} equations are slightly unusual because we implicitly impose that the number of points inside $\La$ be determined by the outside configuration $\bX_{\bLa}$. One speaks of “canonical” equations, in the sense of the “canonical ensemble” of statistical mechanics, by opposition to “grand-canonical” (which is the usual setting for DLR equations) see e.g. \cite{georgii1976canonical}. Another way to phrase this is to observe that \eqref{DLR} equations involve a binomial point process (with a \emph{fixed} number of points) instead of a Poisson point process, which is more common.

Number-rigidity (which, under this name, has been introduced recently in \cite{Ghosh_2017}) is the property of a point process such that for every bounded Borel set $\La$, the number of points \emph{inside} $\La$ is a.s. equal to a certain function of the configuration \emph{outside} $\La$. 

Obtaining canonical DLR equations is very natural for Gibbs measures that are number-rigid. And as a matter of fact, we prove:
\begin{theorem}
\label{theo:NR}
$\PI$ is \emph{number-rigid}. 
\end{theorem}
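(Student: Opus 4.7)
The plan is to apply the classical Ghosh–Peres criterion for number-rigidity: it suffices to exhibit, for every bounded Borel $\La \subset \R^2$, a sequence of compactly supported smooth functions $\varphi_k$ with $\varphi_k \equiv 1$ on $\La$ and $\Var_{\PI}(\sum_i \varphi_k(x_i)) \to 0$. Indeed, decomposing
\begin{equation*}
\sum_i \varphi_k(x_i) = \Points(\bX,\La) + \sum_{x_i \in \bLa} \varphi_k(x_i),
\end{equation*}
the second sum is $\sigma(\bX_{\bLa})$-measurable, so vanishing variance forces $\Points(\bX,\La)$ to be $L^2$-close, and in the limit $\PI$-almost surely equal, to a function of $\bX_{\bLa}$.

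The key input is the Coulomb (here logarithmic) variance bound for smooth linear statistics: for all smooth $\varphi$ of compact support,
\begin{equation*}
\Var_{\PI}\!\left(\sum_i \varphi(x_i)\right) \leq C(\beta)\, \int_{\R^2} |\nabla \varphi|^2 \, dx.
\end{equation*}
Such estimates are well-established at finite $N$ for $\PNbeta$ by several methods (loop equations and transport \emph{à la} Leblé–Serfaty, or a Helffer–Sjöstrand/gradient-flow approach), uniformly in $N$ large enough that $\x + \supp \varphi$ lies well inside $\LN$, which is guaranteed by the bulk condition \eqref{def:bulk}. The bound then transfers to the subsequential local limit $\PI$ provided the squared linear statistic is uniformly integrable along the extracting subsequence, which in turn follows from standard second-factorial-moment control on $\PNbeta$.

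Given the variance bound, the construction of $\varphi_k$ is the classical two-dimensional logarithmic cutoff. Fix $r > 0$ with $\La \subset B(0,r)$ and, for $R > r$, take a smooth mollification of
\begin{equation*}
\varphi_R(x) = \min\!\left(1,\, \max\!\left(0,\, \frac{\log(R/|x|)}{\log(R/r)}\right)\right),
\end{equation*}
so that $\varphi_R \equiv 1$ on $\La$, $\varphi_R \equiv 0$ outside $B(0,R)$, and a direct polar-coordinates computation yields $\int_{\R^2} |\nabla \varphi_R|^2 \, dx = \frac{2\pi}{\log(R/r)} \to 0$ as $R \to \infty$. Combined with the variance estimate this gives $\Var_{\PI}(\sum_i \varphi_R(x_i)) \to 0$, and the Ghosh–Peres criterion closes the argument.

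The main obstacle is not the cutoff construction, which is explicit, but the clean transfer of the variance bound from $\PNbeta$ to $\PI$: one needs an estimate whose constant is uniform in $N$ and does not deteriorate as $\supp \varphi$ is translated by $\x$ (controlled by \eqref{def:bulk}), together with enough moment control on $\sum_i \varphi(x_i)$ to upgrade local weak convergence into convergence of variances. I expect this part to reduce to a routine application of the quantitative energy-based estimates already developed in the paper to define the move function $\MLa$ (Theorem \ref{theo:zero}), supplemented by the factorial-moment bounds available for the 2DOCP.
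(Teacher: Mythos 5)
Your approach is the same as the paper's: the Ghosh--Peres criterion plus a logarithmic cutoff. But there is a genuine gap in the key step, and it is one the paper explicitly warns about.

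You assert a uniform variance bound $\Var_{\PNbeta}(\Fluct[\varphi]) \leq C(\beta)\int|\nabla\varphi|^2$ for all smooth compactly supported $\varphi$, attributing it to the Lebl\'e--Serfaty transport/loop-equation results. This bound is \emph{not} what those results give. The available CLTs (e.g.\ \cite{serfaty2020gaussian}) state that the Laplace transform of $\Fluct[\varphi]$ approaches $\exp(\frac{\tau^2}{4\pi\beta}\int|\nabla\varphi|^2)$, but with an error term controlled by $W^{1,\infty}$ and $W^{2,\infty}$ norms of $\varphi$ --- not by $\int|\nabla\varphi|^2$. For your cutoff $\varphi_R$ with the inner radius $r$ fixed and $R\to\infty$, the relevant sup-norms behave like $|\varphi_R|_\kk \sim \frac{1}{r^\kk \log(R/r)}$, which does \emph{not} decay like $R^{-\kk}$; plugging into the available error terms produces a bound that blows up, not one that vanishes. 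So the assertion ``well-established by several methods'' is not accurate: a $\Var \lesssim \|\nabla\varphi\|_{L^2}^2$ estimate for the 2DOCP is believed but not proven, and the paper notes precisely this in a footnote (``the error term is evaluated in terms of global Sobolev norms \ldots instead of something more local'').

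The fix, which is what the paper actually does, is to exploit the flexibility of Ghosh--Peres with a two-parameter cutoff: first build $\phiep$ with $\int|\nabla\phiep|^2 = O(\epsilon)$ and the pointwise decay $|\phiep|_\kk(x) \lesssim \epsilon/|x|^\kk$, \emph{then} rescale by $\ell$ and send $\ell\to\infty$ first (at fixed $\epsilon$). After rescaling, $|\phiepl|_\kk \leq \Cc\epsilon\,\ell^{-\kk}$ with a uniformly small prefactor, which is exactly what the CLT of \cite{serfaty2020gaussian} requires. The variance then converges, as $N\to\infty$ and then $\ell\to\infty$, to $\frac{1}{4\pi\beta}\int|\nabla\phiep|^2 = O(\epsilon)$; at that point one passes to the (exp-tame) infinite-volume limit and takes $\Phi_\epsilon = \phiepl$ for $\ell$ large. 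Your single-parameter cutoff (fixed $r$, growing $R$) forecloses this route: the sup-norms of the derivatives are pinned by the fixed inner scale $r$, so you cannot make the CLT error small. You need to let both scales grow, which is why the paper parameterizes by $(\epsilon,\ell)$ rather than by $R/r$ alone.

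Everything else --- the Ghosh--Peres decomposition, the passage from the finite-$N$ bound to $\PI$ via uniform integrability / factorial moment control --- matches the paper and is fine.
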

This result is new for $\beta \neq 2$. It provides a natural one-parameter family of two-dimensional number-rigid point processes which (conjecturally) interpolate between a Poisson point process ($\beta \to 0$) and a lattice ($\beta \to \infty$).

\paragraph{Translation-invariance.}
Our final result belongs to a line of questions about continuous symmetry breaking (or the absence thereof) in two-dimensional systems. 

In Section \ref{sec:good} we introduce a set of “good” point processes that enjoy the same good properties as the weak limit points (see Section \ref{sec:prelimINF}), so that in particular the functions $\MLa$ (see \eqref{eq:MLa}) are well-defined and \eqref{DLR} makes sense. We also include in the definition of “good” processes a couple of technical requirements. We then prove:
\begin{theorem}
\label{theo:TI}
If $\Pp$ is a “good” point process which is solution of \eqref{DLR}, then $\Pp$ is \emph{translation-invariant\footnote{$\Pp$ is translation-invariant when for all $u \in \R^2$ the push-forward of $\Pp$ by $\bX \mapsto \sum_{x \in \bX} \delta_{x+u}$ is equal to $\Pp$.}}.
\end{theorem}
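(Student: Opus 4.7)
I follow the classical Mermin-Wagner / Fröhlich-Pfister scheme, adapted to the long-range 2D log-gas through the \eqref{DLR} equations provided by Theorem \ref{theo:DLR}. Fix $u \in \R^2$ and a bounded local observable $f$ depending only on the configuration in a ball $B_{R_0}$; the goal is $\E_\Pp[f(\bX + u)] = \E_\Pp[f(\bX)]$. For each large parameter $T$, I build a smooth compactly supported vector field $\vv_T$ on $\R^2$ equal to $u$ on $B_{\sqrt{T}}$, vanishing outside $B_T$, and interpolated logarithmically on the annulus $\Ann_T := B_T \setminus B_{\sqrt{T}}$ (the classical optimal 2D cutoff), yielding $\int_{\R^2} |\nabla \vv_T|^2 \dd x \leq \Cc |u|^2 / \log T$. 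Let $\Phi^T$ be the time-$1$ map of the flow generated by $\vv_T$; this is a diffeomorphism of $\R^2$ coinciding with translation-by-$u$ on $B_{\sqrt{T}}$ and with the identity outside $B_T$. Since $f(\Phi^T \bX) = f(\bX + u)$ as soon as $T \geq R_0^2$, it is enough to prove $\E_\Pp[f(\Phi^T \bX)] - \E_\Pp[f(\bX)] \to 0$.

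\textbf{DLR-based comparison.} Apply \eqref{DLR} to $\Pp$ with $\La := B_T$. Since $\Phi^T$ preserves $\La$, restricts to the identity on $\bLa$, and preserves the point count $n := \Points(\bX, \La)$, pushing it inside the $f$-integrand of \eqref{DLR} and performing the change of variables $\bX' \mapsto \Phi^T(\bX')$ in the binomial integral gives
\begin{equation*}
\E_\Pp[f(\Phi^T \bX)] = \E_\Pp\!\left[\frac{1}{\KLbeta(\bX)}\!\int f(\bX' \cup \bX_{\bLa}) e^{-\beta\left(\FLa((\Phi^T)^{-1}\bX') + \MLa((\Phi^T)^{-1}\bX',\bX)\right)} \mathrm{Jac}((\Phi^T)^{-1})(\bX') \dd \Bin_{\La, n}(\bX')\right].
\end{equation*}
Subtracting the \eqref{DLR} representation of $\E_\Pp[f(\bX)]$, the discrepancy is controlled by the random ``energy cost''
\begin{equation*}
\Delta_T(\bX',\bX) := \bigl[\FLa((\Phi^T)^{-1}\bX') + \MLa((\Phi^T)^{-1}\bX',\bX)\bigr] - \bigl[\FLa(\bX') + \MLa(\bX',\bX)\bigr]
\end{equation*}
together with the log-Jacobian $\log \mathrm{Jac}(\Phi^T)$, which is uniformly $O(\|\dive \vv_T\|_\infty) = o(1)$.

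\textbf{The key estimate, where the difficulty lies.} The crux is to show that $|\Delta_T| \to 0$ in $L^1$ with respect to the joint law of $(\bX', \bX)$ under $\Pp$ and the Gibbs specification. Using the electrostatic formulation of the log-gas energy developed in the preliminaries (truncated field $\nHe$ at scale $\vec{\eta}$, neutralized potential $\HHe$) and differentiating $\FLa(\Phi^T_t \bX') + \MLa(\Phi^T_t \bX', \bX)$ along the time-$t$ flow generated by $\vv_T$, one obtains the Maxwell stress-tensor identity
\begin{equation*}
\frac{d}{dt}\bigg|_{t=0} \bigl[\FLa(\Phi^T_t \bX') + \MLa(\Phi^T_t \bX', \bX)\bigr] = \int_{\R^2} \bigl(\nHe \otimes \nHe - \hal |\nHe|^2 \Id\bigr) : \nabla \vv_T \, \dd x,
\end{equation*}
the integrand being supported on $\Ann_T$. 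The main obstacle is that the electrostatic energy is \emph{extensive}: $\E_\Pp \int_{\Ann_T}|\nHe|^2 \dd x$ is of order $\Leb(\Ann_T) = O(T^2)$, so a naive Cauchy-Schwarz against $\|\nabla \vv_T\|_{L^2}^2 = O(1/\log T)$ falls short by a factor $T^2/\log T$. The expected resolution combines two ingredients: (i) a cancellation argument killing the linear-in-$t$ contribution by symmetry, either by averaging $\vv_T$ with its sign-reverse or by importing a first-variation identity from the finite-$N$ level where $\PNbeta$ is rotation-invariant, so that only the quadratic-in-$t$ piece survives; (ii) the quantitative, \emph{intensive} electric-field estimates baked into the definition of a ``good'' process (Section \ref{sec:good}), bounding the quadratic piece by $\|\nabla \vv_T\|_{L^2}^2$ times a per-unit-area energy density rather than the extensive total. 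The logarithmic cutoff then produces $o(1)$. Substituting back yields $\E_\Pp[f(\Phi^T \bX)] - \E_\Pp[f(\bX)] \to 0$, whence $\E_\Pp[f(\bX + u)] = \E_\Pp[f(\bX)]$; as $u$ and $f$ were arbitrary, $\Pp$ is translation-invariant.
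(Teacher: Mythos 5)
Your strategy is the right family — localized translations in the Fröhlich-Pfister / Mermin-Wagner spirit, pushed through the \eqref{DLR} equations with a change of variables in the binomial integral, and averaging of two opposite translations to kill the linear anisotropy term — but there is a genuine gap in what you claim the estimate should be, and as a consequence the conclusion step is missing.

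The central issue is the claim that $|\Delta_T| \to 0$ in $L^1$. This is false, and the paper does not prove it. The analogue of your $\Delta_T$ is $\Diff^1_\La + \Diff^2_\La$ (definitions \eqref{eq:Diff_1}--\eqref{eq:Diff_2}), and Lemmas \ref{lem:controlDiff1}--\ref{lem:controlDiff2} establish only that the exponential moment $\E_{\Pp}[e^{2\beta|\Diff^1 + \Diff^2|}]$ is \emph{bounded uniformly in $L$} — not that the quantity vanishes. Indeed, after averaging $\TLp$ and $\TLm$, the surviving anisotropy contribution is $\Ani[\Rem_L]$ with $|\Rem_L|_\kk = O(L^{-\kk-1})$, and the estimate \eqref{eq:AniErrorExpMom} gives $\log \EN[\exp(\beta|\Ani[\Rem_L]|)] \lesssim L^{-1} \cdot L = O(1)$, which cannot be improved to $o(1)$. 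The logarithmic cutoff you invoke (interpolating from $\sqrt{T}$ to $T$ to get Dirichlet norm $O(1/\log T)$) is the right heuristic in Gaussian-free-field settings, but here the relevant control is on exponential moments of anisotropy and remainder terms which are size $O(1)$ regardless of how gently the vector field decays; the paper uses instead a linear-scale cutoff (from $L$ to $2L$) with $|\psiLpm|_\kk = O(L^{-\kk})$, built as a time-$\pm 1$ Hamiltonian flow so that the maps are \emph{exactly} area-preserving (your log-Jacobian term does not arise).

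Since the energy shift is only bounded, not small, the direct limit $\E_\Pp[f\circ \Phi^T] \to \E_\Pp[f]$ does not follow, and a second ingredient is indispensable: the Bricmont-type extremal decomposition argument. Concretely, one first shows (via Cauchy-Schwarz, convexity, and the change of variables in \eqref{DLR}, exactly along the lines you outline) the Radon-Nikodym inequality
\begin{equation*}
\Pp(A)^2 \leq \Cc \cdot \tfrac{1}{2}\bigl(\Ppp(A) + \Ppm(A)\bigr)
\end{equation*}
for all local events $A$, with $\Cc$ independent of $A$. This yields absolute continuity of $\Pp$ with respect to $\tfrac{1}{2}(\Ppp + \Ppm)$. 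One then observes that $\Ppp, \Ppm$ are also good solutions of \eqref{DLR}, that extremal solutions are tail-trivial and hence mutually singular, and that absolute continuity of an extremal $\Pp$ with respect to a mixture of two other extremals forces $\Pp$ to coincide with one of them — whence $\Pp = \Ppp = \Ppm$ for extremal $\Pp$, and the general case follows by mixture. Without this step, bounded (but nonvanishing) exponential moments of $\Diff^1, \Diff^2$ give you nothing directly about $\E_\Pp[f\circ\Phi^T]$. Finally, a side remark: the uniform exponential moment bounds on $\Diff^1, \Diff^2$ are precisely item 4 in the definition of a ``good'' process (Section \ref{sec:good}) and are proven for weak limit points via a delicate finite-volume / dyadic-layer analysis (Claim \ref{claim:FiniteSize2}), not by an intensive-vs-extensive Maxwell-stress-tensor heuristic as you suggest.
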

By design, all weak limit points are “good” processes, and thus, combining Theorem \ref{theo:DLR} and Theorem \ref{theo:TI}:
\begin{corollary}
$\PI$ is translation-invariant.
\end{corollary}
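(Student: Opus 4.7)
The plan is to derive the corollary as an immediate consequence of Theorem \ref{theo:DLR} and Theorem \ref{theo:TI}. Theorem \ref{theo:TI} has two hypotheses: that $\PI$ solves \eqref{DLR}, and that $\PI$ is a "good" point process in the sense of Section \ref{sec:good}. The first is exactly Theorem \ref{theo:DLR}, so nothing is to be done there. The only substantive step is therefore to verify "goodness" of $\PI$.

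For this, I would unpack the definition from Section \ref{sec:good} condition by condition. Since $\PI$ arises by construction as a weak limit of the laws $\PpNx$ of the bulk-centered finite 2DOCP, the natural strategy is to show that each ingredient of the "good" property is either: (a) directly preserved under the weak-convergence topology used to extract $\PI$; or (b) uniformly controlled at the finite-$N$ level and transferred to the limit by Fatou-type semicontinuity. The expected ingredients are moment/tail bounds on the number of points in bounded sets, control on the local logarithmic energy strong enough to make the conclusion of Theorem \ref{theo:zero} applicable $\PI$-a.s., and the additional technical requirements stated in Section \ref{sec:good}. Each of these should follow from the corresponding finite-$N$ 2DOCP estimates, many of which are available in \cite{Armstrong_2021} and in the preliminaries gathered in Section \ref{sec:prelimINF}. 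The author's remark that "by design, all weak limit points are 'good' processes" signals that the definition in Section \ref{sec:good} is calibrated precisely to make this transfer routine.

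The main obstacle is therefore a bookkeeping one: ensuring that the precise form of each "good" condition is compatible with weak convergence, so that no uniform-in-$N$ bound is lost in the limit. Once this is confirmed, the corollary is immediate: $\PI$ is "good" (by the verification above) and satisfies \eqref{DLR} (by Theorem \ref{theo:DLR}), hence Theorem \ref{theo:TI} applies and yields translation-invariance of $\PI$. No new ideas beyond those packaged into Theorems \ref{theo:DLR} and \ref{theo:TI} are required.
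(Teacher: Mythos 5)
Your proposal is correct and takes essentially the same route as the paper: the corollary is obtained precisely by combining Theorem \ref{theo:DLR} and Theorem \ref{theo:TI}, together with the observation (which the paper makes explicitly, immediately before the corollary) that every weak limit point $\PI$ is a ``good'' process by design. The verification of goodness for $\PI$ is carried out in the body of the paper — the number/correlation/fluctuation controls in Section \ref{sec:prelimINF}, and the exponential moment bounds on $\Diff^1,\Diff^2$ in Lemmas \ref{lem:controlDiff1} and \ref{lem:controlDiff2} — so at the point the corollary is stated nothing further is needed.
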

The fact that this statement holds for $\beta$ arbitrarily large  is physically significant. Indeed, it is expected (see e.g. \cite[Sec. VI]{lewin2022coulomb} for a survey) that around $\beta \approx 140$, the system becomes a “crystal”, and that as $\beta \to \infty$ it becomes a lattice. Theorem \ref{theo:TI} shows that even in its hypothetical “crystal” phase, and despite the long-range nature of the interaction, the 2DOCP does not break translation-invariance.

\subsection{The 2DOCP}
The 2DOCP (or Coulomb gas, or log-gas) is a well-studied model of statistical physics - we refer to the recent survey \cite[Section III]{lewin2022coulomb} for an overview and many references.  Its finite-volume properties have been the object of many recent studies in the mathematical literature (see the monograph \cite{serfaty2024lectures}). 

The interaction energy \eqref{def:FN} should be thought of as the logarithmic/Coulomb pairwise interaction of a system made of $N$ point charges placed at $x_1, \dots, x_N$ together with a “uniform neutralizing background” (to quote from the physics literature) of density $-1$ on $\LN$.

Some important physics papers about this model (for example \cite{jancovici1993large}) consider “infinitely extended” 2DOCP's, however their mere existence was until recently unclear from a mathematical perspective. The present paper goes relies on and goes beyond the abstract existence result of \cite{Armstrong_2021} (which consists in a compactness argument) by giving an interpretation of the infinite-volume system in line with the classical theory of infinite Gibbs measures used in mathematical statistical mechanics.

\paragraph{Hard versus soft confinement.} We have chosen here to work with a 2DOCP whose particles are forced to live in a given “box”, namely the disk $\LN$, as this is the usual choice in the physics literature (though that box might sometimes be a square). Another possibility consists in lifting the constraint $\XN \in (\LN)^N$ and allowing for any $\XN \in (\R^2)^N$, while adding to the interaction energy a term of the form $\sum_{i=1}^N \zeta_N(x_i)$, where the “effective confinement” $\zeta_N$ vanishes on $\LN$ and grows fast enough at infinity in order to force “most” of the particles to be “inside or close to” $\LN$ (see \cite[Sec. 5.1]{serfaty2024lectures}).

In the first case (strict confinement), there is a “hard edge along $\partial \LN$, while in the second case (soft confinement), inspired by models of Random Matrix Theory, some outliers may live outside $\LN$. As a rule of thumb, when studying phenomena that take place “in the bulk” (away from $\partial \LN$), this choice does not matter very much. In particular, we could run most of the present analysis in the case of a soft confinement with no changes. There is however one technical point in the proof of Theorem \ref{theo:DLR} (DLR equations) which seems to be more delicate with a hard edge (i.e. in the setting of the present paper), we discuss this in Section \ref{sec:LastLayer}.

\paragraph{The Ginibre ensemble.} The value $\beta = 2$ of the inverse temperature parameter stands out because the system is then “integrable” and many explicit computations become tractable. In particular, for $\beta = 2$:
\begin{enumerate}
    \item Existence of an infinite-volume limit (and not only of limit \emph{points}) for the local point processes $\PpNx$'s is known since the work of Ginibre \cite{ginibre1965statistical}. It is called the Ginibre point process, infinite Ginibre ensemble, Ginibre random point field... Moreover:
\begin{itemize}
    \item The limit does not depend on $\x$ as long as it is chosen in the bulk of $\LN$. 
    \item The limit process is determinantal (see \cite{hough2006determinantal}). 
    \item The limit process is translation-invariant (which is not obvious).
\end{itemize}
    \item Number-rigidity has been proven in \cite{Ghosh_2017} among the first examples of rigid point processes.
    \item DLR equations have been established in \cite{bufetov2017conditional}.
\end{enumerate}
\begin{remark}
Strictly speaking, the Ginibre ensemble correspond to a 2DOCP at inverse temperature $\beta = 2$ \emph{and with a certain “soft” confinement}. The “hard edge” case has however also been studied for a long time, see \cite{jancovici1982classical} for an early occurence in the physics literature and \cite{ameur2024random} (and the references therein) for a recent mathematical treatment.
\end{remark}

\paragraph{Scaling convention.} In this paper, the relevant lengthscales vary from order $1$ (inter-particle distance) to order $\sqrt{N}$ (diameter of the finite-$N$ system). This is appropriate in order to take $N \to \infty$ and study the infinite-volume limit. Some papers choose the “random matrix” convention and scale everything down by $\sqrt{N}$, so the inter-particle distance is $\sim N^{-\hal}$ and the global lengthscale is $\sim 1$. This means that some back-and-forth is sometimes required when importing results.

\subsection{Connection with the literature}
\paragraph{The one-dimensional log-gas.} The one-dimensional log-gas is a closely related model, whose definition looks very similar to \eqref{def:FN}, \eqref{def:PNbeta} except that the $N$ particles are now constrained to live on the real line (either strictly confined within a line segment of length $N$ or with some “soft” confinement). This model (which is also  called a “$\beta$-ensemble”) has received a lot of attention, in part because of its link with Random Matrix Theory, as first observed by Dyson \cite{dyson1962statistical}. The value $\beta = 2$ is again remarkable and has been studied thoroughly, as well (to a lesser extent) as the cases $\beta = 1, 4$. Recent works \cite{valko2009continuum} (see also \cite{killip2009eigenvalue}) building upon the celebrated “tridiagonal model” of \cite{dumitriu2002matrix} gave some understanding of the general $\beta > 0$ case regarding infinite-volume limits. 

In summary:
\begin{itemize}
     \item For $\beta = 1, 2, 4$, limiting processes exist and have a remarkable algebraic structure (determinantal for $\beta = 2$, Pfaffian for $\beta = 1, 4$). For $\beta = 2$, number-rigidity follows from \cite{bufetov2017quasi} and DLR equations were proven in \cite{bufetov2020conditional} (see also \cite{kuijlaars2019universality}). For $\beta = 1, 4$, number-rigidity is proven in \cite{bufetov2019number}. Those processes are known to be translation-invariant. We are not aware of ad hoc DLR equations.
     \item For $\beta > 0$ arbitrary, by \cite{valko2009continuum}:
     \begin{enumerate}
         \item There is existence of an infinite-volume limit (and not only of limit \emph{points}) for $\PpNx$. It is named the Sine-$\beta$ point process.
         \item The limit does not depend on $\x$, up to a density scaling, as long as $\x$ is chosen in the bulk of the system.
         \item The Sine-$\beta$ process is translation-invariant (which is not obvious).
     \end{enumerate}
     \item Moreover, closely related to the results of the present paper, it was recently proven that:
     \begin{enumerate}
         \item Sine-$\beta$ is number-rigid for all $\beta > 0$ (\cite{chhaibi2018rigidity}, \cite{dereudre2021dlr}).
         \item DLR equations hold \cite{dereudre2021dlr}. Those equations have then been put to use in \cite{leble2021clt} in order to study the fluctuations of linear statistics for Sine-$\beta$, which shows that the “physical” description of the limiting process given by the DLR formalism can be fruitful.
     \end{enumerate}
 \end{itemize} 
The present work originates from an attempt at adapting the methods of \cite{dereudre2021dlr} to the two-dimensional log-gas.

\begin{remark}[Two- versus one-dimensional log-gas]
Giving a sense to the move functions (Theorem~\ref{theo:zero}) is a major issue when trying to obtain DLR equations for systems with such long-range interactions. A similar problem arose in \cite{dereudre2021dlr} concerning the $1d$ log-gas, but the problem of convergence was easier to solve there, due to the fact that the $\log$ potential is less long-range in dimension $1$ than in dimension $2$.

At a technical level, the difference between the two situations can be seen as follows: for the $1d$ log-gas, one could use a decomposition of the space (the real line) into line segments of length $1$ (namely, the $\chi_i$'s in Theorem \ref{theo:zero} can be chosen as indicator functions of small segments instead of a smooth partition of unity associated to dyadic annuli). Using fairly rough controls (called discrepancy estimates, and not even stated in a sharp form) on the contribution coming from each segment was then enough to deduce almost sure convergence as $p \to \infty$ in \eqref{eq:MLa2}. Here for the 2DOCP, one needs to cut the space quite differently, using smooth cut-offs living at dyadic scale, and to import fine estimates controlling the size of fluctuations for linear statistics of smooth functions. 

In that regard, the key technical input that was needed for the present work is the result of \cite{Armstrong_2021} stating local laws that are valid \emph{down to the microscopic scale}.
\end{remark}

\paragraph{Empirical field and free energy minimizers.}
\newcommand{\Fbeta}{\mathcal{F}_\beta}
\newcommand{\Emp}{\mathrm{Emp}}
Some understanding about the infinite-volume behavior of the 2DOCP at microscopic/local scale can be found in \cite{MR3735628} thanks to a large deviation principle for a microscopic observable $\Emp_N$ called the \emph{empirical field}. In short, $\Emp_N$ is obtained by considering the local point configuration $\bXNx$ (as introduced above in \eqref{def:bXNx}) and adding another layer of randomness by \emph{choosing $\x$ uniformly at random within (the bulk of) $\LN$}. This has the effect of “averaging” over the values of $\x$ and thus only speaks about the \emph{average} local behavior. It is proven in \cite{MR3735628} that $\Emp_N$ concentrates as $N \to \infty$ around minimizers of a certain free energy functional $\Fbeta$ defined on the space of translation-invariant point processes.

It is important to keep in mind that a priori, the limit (points) of $\Emp_N$ and the limit (points) of $\PpNx$ \emph{might not coincide}, unless e.g. all the $\PpNx$'s have the same limit (as is the case for $\beta =2$). For example, limit points of $\Emp_N$ are always translation-invariant “for free” because of the averaging, but proving it for the “non-averaged” local process is significantly more subtle.

\paragraph{(Non)-number rigidity.} As mentioned above, related number-rigidity processes include:
\begin{itemize}
    \item The “classical” Sine-process and its Pfaffian friends (log-gas, dimension $1$, $\beta = 1, 2, 4$), by the works of Bufetov et al.
    \item The Sine-$\beta$ process (log-gas, dimension $1$, $\beta > 0$) by \cite{chhaibi2018rigidity,dereudre2021dlr}.
    \item The infinite Ginibre ensemble (log-gas, dimension $2$, $\beta =2$), by \cite{Ghosh_2017}.
\end{itemize}
Some non-rigidity results have recently been proven concerning Riesz gases, i.e. systems of particles where the logarithmic interaction $-\log|x-y|$ is replaced by a power-law $\frac{1}{|x-y|^s}$ (we refer again to the survey \cite{lewin2022coulomb}). Among those:
\begin{itemize}
    \item For $d \geq 2$ and $s \in (d-1, d)$, the existence of non-rigid Gibbs states at arbitrary temperature has been found in \cite{dereudre2023number}, using the DLR framework. 
    \item Non-rigidity of higher-dimensional Coulomb gases (dimension $d \geq 3$, $s = d-2$, $\beta > 0$) is proven in \cite{thoma2023non} with a new, DLR-free argument.
\end{itemize}

It has recently been observed that number-rigidity of a point process plays a central role in certain properties of the dynamics associated to this process, see \cite{osada2024vanishing} (“dynamical rigidity” for the Ginibre ensemble) and \cite{suzuki2024ergodicity} (ergodicity for Ginibre and other rigid point processes). One can expect that our proof of number-rigidity for the 2DOCP at all $\beta > 0$ will have consequences for the study of the associated dynamics. Note that these dynamics remain to be defined - in fact, we suspect that our Theorem \ref{theo:zero} might be helpful in that regard, as it gives a way to make sense of the interaction of a particle with an infinite system.

\paragraph{Translation-invariance.}
Note that from the mere definition \eqref{def:bXNx} of $\bXNx$ (choose e.g. $\x = 0$ for all $N$) it is not obvious at all that the limiting process should have a translation-invariant distribution. Indeed, although the interaction $-\log|x-y|$ depends only on the relative position of $x$ and $y$, the finite-volume Gibbs measure $\PNbeta$ itself cannot be translation-invariant because of the finite domain (this would of course be different if one had chosen periodic boundary conditions or averaged over choices of $\x$ by considering empirical fields). 

As put in \cite{MR1886241}: \textit{“This leads us to the question of whether translation invariance necessarily holds, i.e., under what conditions all Gibbs measures are translation invariant. (...) For point particle systems, translation is a continuous symmetry. So our question leads us into the realm of Mermin-Wagner\footnote{We emphasize that the Mermin-Wagner theorem is... a theorem, with certain assumptions, and that continuous symmetries \emph{can} (quite obviously) be broken if the interactions are too strong.} resp. Dobrushin-Shlosman theory of conservation of continuous symmetries in two-dimensional systems.”} There are general, powerful results due to \cite{frohlich1981absence,frohlich1986absence} and \cite{MR1886241}, extended e.g. in \cite{richthammer2005two,richthammer2007translation,richthammer2009translation}, showing that under certain conditions all Gibbs states (i.e. all solutions to the DLR equations) must be translation-invariant. However the existing assumptions on regularity and decay of the interaction potential do not apply to the Coulomb setting. We show that those methods can nonetheless be used (in combination with recent results on 2DOCP's) to prove Theorem \ref{theo:TI}. 

\begin{remark}
There exists a finite-volume approximate translation-invariance estimate for the 2DOCP, which appears as a key ingredient in \cite{leble2021two}. This implies translation-invariance of the infinite-volume limit points. Although it relies on similar ideas, it is fair to say that the argument in \cite{leble2021two} is significantly more involved than Theorem \ref{theo:TI}, due to a puzzling simplification occurring in infinite volume. We choose here to present the simpler proof obtained by directly working with infinite systems, which illustrates both the benefit of having DLR equations and the robustness of classical infinite-volume arguments.
\end{remark}

\subsection{Open questions}
\label{sec:open}
Let us mention three natural important open questions:
\begin{itemize}
\item Does $\PpNx$ admit a limit (and not only limit points) as $N \to \infty$? 
\item Does \eqref{DLR} admit a unique solution?
\item Do solutions of \eqref{DLR} and minimizers of the free energy $\Fbeta$ of \cite{MR3735628} coincide?
\end{itemize}
Of course, in view of our results, a positive answer to the second question would also bring a positive answer to the first one. A positive answer for $\beta$ \emph{small enough} would already be very interesting - there are general results hinting at it, like Dobrushin's uniqueness criterion (see \cite[Chap. 8]{MR2807681}) for lattice systems with short-range interactions, but those are not proven for long-range continuous systems.

\paragraph{Edge behavior.}
The result of \cite[Cor. 1.3]{thoma2022overcrowding}, extending \cite[Cor. 1.1]{Armstrong_2021}, shows (at least in the case of a soft confinement) the existence of limit points for $\PpNx$ even if $\x$ is near or on the boundary of $\LN$. Understanding the limit points near the boundary, proving DLR equations etc. would be an interesting task. Note that for the Ginibre ensemble ($\beta =2$), the edge process is known (\cite{forrester1999exact}), but we are not aware of any DLR equations for it.

% In contrast, the edge behavior of the one-dimensional log-gas is fairly well-understood for all $\beta >0$ (the limit is called Airy-$\beta$, see \cite{ramirez2011beta}). 

\paragraph{Riesz gases.} It would be natural to seek to derive DLR equations for the entire family of Riesz gases in dimension $d \geq 1$, with $s \in [d-2, d)$ - and in particular for all Coulomb gases in dimension $d \geq 3$. However, the proof presented here relies on a fine understanding of the 2DOCP (local laws, good controls of linear statistics and of discrepancies) that are not available for the full Riesz family yet.

\subsection{Plan of the paper}
\begin{itemize}
    \item We gather some definitions in Section \ref{sec:preliminaries}. 
    \item In Section \ref{sec:ResFinite}, we recall recent results on the finite-volume 2DOCP.
    \item In Section \ref{sec:prelimINF} we show that the convergence result of \cite{Armstrong_2021} can be upgraded to a much stronger topology. We use this to pass finite-volume results to the infinite-volume limit.
    \item We prove number-rigidity (Theorem \ref{theo:NR}) in Section \ref{sec:numberRigProof}. It does not rely on DLR equations.
    \item In Section \ref{sec:ProofMaking} we prove that \eqref{DLR} equations \emph{make sense} by proving Theorem \ref{theo:zero}.
    \item In Section \ref{sec:DLR_expression} we prove Theorem \ref{theo:DLR}: weak limit points of the 2DOCP are solutions to \eqref{DLR}.
    \item In Section \ref{sec:ProofTI} we show the translation-invariance result stated in Theorem \ref{theo:TI}.
    \item Finally, an appendix contains the proof of some auxiliary results.
\end{itemize}

\subsubsection*{Acknowledgments.}
The author acknowledges the support of JCJC grant ANR-21-CE40-0009 from Agence Nationale de la Recherche. We thank Thibaut Vasseur for insightful comments.

\section{Preliminaries}
\label{sec:preliminaries}

\subsection{Notation}
\label{sec:notation}

\begin{itemize}
    \item We let $\DD(x,r)$ be the disk of center $x$ and radius $r$. We simply write $\DD(r)$ for $\DD(0,r)$.
    \item We denote indicator functions by $\ind$.
    \item We denote “universal” constants by $\Cc$, constants depending only on $\beta$ by $\Cc_\beta$, and indicate the dependencies otherwise. Those constants are always chosen “as large as needed” and may change from line to line.
\end{itemize}

\paragraph{Notation for the size of derivatives.} For $\varphi : \R^2 \to \R$, we write:
    \begin{equation}
    \label{notDeriv}
|\varphi|_\0 := \sup_{x \in \R^2} |\varphi(x)|, \quad |\varphi|_\1 := \sup_{x \in \R^2} \|\nabla \varphi(x)\|, \quad |\varphi|_\2 := \sup_{x \in \R^2} \|\mathrm{Hess} \varphi(x)\| \text{ etc.}
    \end{equation}
    We also use $|\varphi|_\1$ to denote the Lipschitz semi-norm of $\varphi$.

If $\Ka : \R^2 \times \R^2 \to \R$ is a function of two variables $x = (x_1, x_2)$ and $y = (y_1, y_2)$, we denote by $\nabxy \Ka(x,y)$ the matrix:
    \begin{equation}
    \label{def:nabxy}
        \nabxy \Ka(x,y) := 
            \begin{pmatrix}
                \partial_{x_1 y_1} \Ka(x,y) & \partial_{x_1 y_2} \Ka(x,y) \\
                \partial_{x_2 y_1} \Ka(x,y) & \partial_{x_2 y_2} \Ka(x,y)
            \end{pmatrix}.
    \end{equation}
    (note that these are all second derivatives with respect to $x$ \emph{and} $y$)
and we let $|\Ka|_{\1 + \1}$ be the supremum of the mixed second derivatives of $\Ka$:
    \begin{equation}
    \label{Ka1p1}
    |\Ka|_{\1 + \1} := \sup_{x,y \in \R^2 \times \R^2} \| \nabxy \Ka(x,y) \|. 
    \end{equation}

\paragraph{A dyadic partition of unity.}
Throughout the paper, we fix a family $\chi := (\chi_i)_{i \geq 0}$ of smooth non-negative functions on $\R^2$ such that:
\begin{itemize}
    \item The $\chi_i$'s form a partition of unity, i.e. $\sum_{i \geq 0} \chi_i(x) = 1$ for all $x \in \R^2$.
    \item $\chi_0$ is supported on $\DD(2)$ and for all $i \geq 1$, $\chi_i$ is supported on the dyadic annulus $\DD(2^{i+1}) \setminus \DD(2^{i-1}) $.
    \item There exists a constant $\Cc_\chi$ such that for all $i \geq 0$ and with the notation of \eqref{notDeriv}:
    \begin{equation}
    \label{assumchii}
    |\chi_i|_{\kk} \leq \Cc_\chi 2^{-i \kk}, \ \kk = 0, 1, 2, 3.
    \end{equation}
\end{itemize}

\subsection{Point configurations and point processes}
\label{sec:pointconfig}
\paragraph{Generalities.}
We denote by $\Conf$ the space of locally finite point configurations in $\R^2$. We usually denote such configurations with a bold typeface and we see them either as a collection of points or as a sum of Dirac masses $\bX = \sum_{x \in \bX} \delta_x$.

If $\bX$ is a point configuration and $\La \subset \R^2$ is Borel measurable, we denote by $\Points(\bX, \La)$ the number of points of $\bX$ in $\La$.

The space $\Conf$ is endowed with the \emph{vague topology} (the topology of “$w^{\#}$-convergence” in \cite[Sec.~9.1]{daley2007introduction}), namely the coarsest topology that makes the linear statistics $\bX \mapsto \int f(x) \dd \bX(x)$ continuous for all test functions $f : \R^2 \to \R$ that are \emph{continuous and compactly supported}. It is well-known (see e.g. \cite[Proposition 9.1.IV. i)]{daley2007introduction}) that $\Conf$ endowed with this topology is Polish.

The Borel $\sigma$-algebra on $\Conf$ coincides with the one generated by all the counting functions $\bX \mapsto \Points(\bX, \La)$, where $\La$ is any bounded Borel subset of $\R^2$ (see \cite[Proposition 9.1.IV. ii)]{daley2007introduction}).

The space of (laws of) \emph{point processes} $\Pr(\Conf)$ is the set of probability measures on $\Conf$. We will consider several topologies on $\Pr(\Conf)$. The most natural for the moment is the usual “weak topology” or topology of \emph{weak convergence of measures}, for which a sequence $(\Pp_n)_{n \geq 0}$ converges to $\Pp$ if and only if
\begin{equation}
\label{WeakA}
\lim_{n \to \infty} \E_{\Pp_n}[f] = \E_{\Pp}[f] \text{ for all } f : \Conf \to \R \text{ bounded and continuous for the vague topology.}   
\end{equation}

\paragraph{Local functions and events.}
We say that a function $f : \Conf \to \R$ is \emph{local} when there exists a bounded Borel set $\La$ such that $f(\bX) = f(\bX')$ whenever $\bX = \bX'$ on $\La$. We say that an event $A \subset \Conf$ is local when $\1_A$ is local. It is worth emphasizing that local events generate the Borel $\sigma$-algebra on $\Conf$.

\paragraph{Discrepancies and fluctuations.}
If $\La$ is bounded and $\bX \in \Conf$, we write $\Di(\bX, \La)$ for the difference 
\begin{equation*}
\Di(\bX, \La) := \Points(\bX, \La) - \Leb(\La).
\end{equation*} 
More generally, if $\varphi : \R^2 \to \R$ is a piecewise continuous, compactly supported function, we write:
\begin{equation}
\label{def:Fluct}
\Fluct[\varphi](\bX) := \int_{\R^2} \varphi(x) \dd \left(\bX - \Leb  \right)(x),
\end{equation}
called the “fluctuation of the linear statistics” associated to $\varphi$. Note that  we retrieve $\Di$ when $\varphi = \ind_{\La}$.

\subsection{The convergence result of \texorpdfstring{\cite{Armstrong_2021}}{AS21}}
\label{sec:RecallAS}
The result of \cite[Corollary 1.1]{Armstrong_2021} is the following: if for each $N \geq 1$ we choose a point $\x = \x(N)$ in $\LN$ such that $\dist(\x, \partial \LN) \geq \Cc_\beta N^{1/4}$ (where $\Cc_\beta$ is some constant, depending only on $\beta$, defined in \cite{Armstrong_2021}), then the law $\PpNx$ of $\bXNx$ converges, as $N \to \infty$, to some point process $\PI$:
\begin{enumerate}
\item up to extraction of a subsequence 
\item in the weak topology of $\Pr(\Conf)$ (see \eqref{WeakA}).
\end{enumerate}
Their compactness argument relies on \emph{local laws}, here in the form of a bound on the moment of the number of points in large boxes, valid \emph{down to the microscopic} scale (a major improvement upon previous local laws given in \cite{leble2017local,bauerschmidt2017local} which dealt with mesoscopic scales) and the tightness of .

We do not know how to dispense with extracting a subsequence in order to prove existence of a limiting point process (see the open questions in Section \ref{sec:open}), but working with limit points is enough for our purposes. The basic idea will be (of course) to pass certain good properties of the finite-volume 2DOCP to the limit $N \to \infty$. To do so, we will need a strong enough notion of convergence - an enhancement over the weak topology used in \cite{Armstrong_2021} - which we present in Section~\ref{sec:PrecisionAS}.

\section{Results in finite volume}
\label{sec:ResFinite}
We collect here some recent results in the analysis of finite-volume 2DOCP's.

For $x$ a point in $\LN$ and $\ell > 0$ a lengthscale, we say that “$(x, \ell)$ satisfies \eqref{condi:LL}” when: 
\begin{equation}
\label{condi:LL}
\ell \geq \rho_\beta, \quad \dist(\DD(x, \ell), \partial \LN) \geq \Cc_\beta N^{1/4},
\end{equation}
where $\rho_\beta, \Cc_\beta$ are some large constants, depending only on $\beta$, introduced in \cite[(1.15) and (1.16)]{Armstrong_2021}.

Note that the definition \eqref{def:bulk} of “bulk” that we chose to state our results clearly implies that the points $\x = \x(N)$ in the sequence satisfy \eqref{condi:LL} for all $\ell \geq \rho_\beta$ fixed and $N$ large enough (depending on $\ell$ and on the sequence $\x$).

\subsubsection*{Bounds on the number of points}
\newcommand{\Ccb}{\Cc_\beta}
\begin{lemma}
There exists $\Ccb$ depending only on $\beta$ such that, if \eqref{condi:LL} is satisfied:
\label{lem:points}
\begin{equation}
\label{eq:bound_points_finite}
\log \EN \left[ \exp\left(\frac{1}{\Ccb} \Points^2(\bXN, \DD(x, \ell))\right)  \right] \leq \Ccb \ell^4, \quad \log \EN \left[ \exp\left(\frac{1}{\Ccb} \Points(\bXN, \DD(x, \ell))\right)  \right] \leq \Ccb \ell^2 .
\end{equation}
\end{lemma}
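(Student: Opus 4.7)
The plan is to reduce both estimates to the Armstrong-Serfaty local law \cite{Armstrong_2021} applied to the \emph{discrepancy}
\[
\Di(\bXN, \DD(x,\ell)) = \Points(\bXN, \DD(x,\ell)) - \pi \ell^2,
\]
where $\pi \ell^2$ is the Lebesgue measure of $\DD(x,\ell)$, i.e.\ the deterministic contribution of the neutralizing background. Under the condition \eqref{condi:LL}, the key input from \cite{Armstrong_2021} is a sub-Gaussian-type exponential moment bound of the form
\[
\log \EN\!\left[\exp\!\left(\tfrac{1}{\Ccb}\,\Di(\bXN, \DD(x,\ell))^2\right)\right] \leq \Ccb,
\]
uniform in $\ell$ satisfying \eqref{condi:LL}. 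The crucial point is that this is valid all the way down to the microscopic scale $\ell \geq \rho_\beta$, which is precisely the novelty of \cite{Armstrong_2021} compared to earlier mesoscopic-scale local laws.

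Granting this input, the quadratic bound is essentially an algebraic manipulation. Using $\Points^2 \leq 2\pi^2 \ell^4 + 2 \Di^2$ one obtains
\[
\exp\!\left(\tfrac{1}{\Ccb}\Points^2\right) \leq \exp\!\left(\tfrac{2\pi^2 \ell^4}{\Ccb}\right) \exp\!\left(\tfrac{2\Di^2}{\Ccb}\right).
\]
Enlarging $\Ccb$ so as to absorb the factor $2$ into the Armstrong-Serfaty input, then taking $\EN$ and $\log$, yields an upper bound of the form $\Ccb \ell^4 + O(1)$; since $\ell \geq \rho_\beta$ by \eqref{condi:LL}, the $O(1)$ term is absorbed by a final enlargement of $\Ccb$. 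The linear estimate then follows immediately from $\Points \leq \pi \ell^2 + |\Di|$ together with the fact that $\EN[\exp(|\Di|/\Ccb)]$ is uniformly bounded (by Cauchy-Schwarz from the quadratic input, or directly from the linear version of the local law).

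Since the lemma is essentially a convenient repackaging of \cite[Corollary 1.2]{Armstrong_2021}, I do not expect any genuine obstacle. The only mildly delicate point is extracting the exponential moment of $\Di^2$ from the form in which local laws are stated in \cite{Armstrong_2021}, where the natural quantity is a weighted energy on small balls rather than the discrepancy itself; the conversion is by now standard.
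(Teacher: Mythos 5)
You claim as the key input a uniform sub-Gaussian bound
\[
\log \EN\!\left[\exp\!\left(\tfrac{1}{\Ccb}\,\Di(\bXN, \DD(x,\ell))^2\right)\right] \leq \Ccb \quad \text{uniformly in } \ell,
\]
but this is false, and the paper does not use it. For the 2DOCP the discrepancy in a disk of radius $\ell$ has fluctuations of order $\sqrt{\ell}$ (surface-area law, $\Var\, \Di \sim \ell$), so for fixed $\Ccb$ the expectation $\EN[\exp(\Di^2/\Ccb)]$ blows up — indeed becomes infinite — once $\ell$ is large enough. No amount of enlarging $\Ccb$ independent of $\ell$ can save this. The actual input available from \cite[(1.19)]{Armstrong_2021} is the weaker, $\ell$-dependent statement
\[
\log \EN \left[  \exp\left(\frac{1}{\Ccb}\, \Di^2 \min\!\left(1, \frac{|\Di|}{\ell^2} \right)\right) \right] \leq \Ccb \ell^2,
\]
whose exponent behaves like $\Di^3/\ell^2$ in the typical regime $|\Di| \ll \ell^2$ and only equals $\Di^2$ in the large-deviation regime $|\Di| \geq \ell^2$. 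You gesture at this distinction when you say the conversion from the ``weighted energy'' form is ``by now standard,'' but the conversion does not deliver what you want.

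The algebraic step $\Points^2 \leq 2\pi^2\ell^4 + 2\Di^2$ is fine, and the decomposition into deterministic background plus discrepancy is the same as the paper's. What is needed to close the gap is the case split of the paper: on the event $\{|\Di| \leq 10\ell^2\}$ one has $\Points \leq \Cc\ell^2$ so $\exp(\Points^2/\Ccb) \leq \exp(\Cc^2\ell^4/\Ccb)$ deterministically; on the complementary event $\{|\Di| > 10\ell^2\}$ the $\min$ in \eqref{AS119} equals $1$ and $\Points^2 \leq 2\Di^2$, so \eqref{AS119} applies directly and contributes $\exp(\Ccb\ell^2)$. Summing the two contributions and taking logs gives $\Ccb\ell^4$, which is what the lemma asserts — but not because $\Di$ has uniformly bounded Gaussian tails; rather because either the discrepancy is small relative to $\ell^2$ (in which case $\Points$ is order $\ell^2$ deterministically) or the full strength of the Armstrong--Serfaty exponential bound kicks in. If you replace the false uniform input by this case split, your argument becomes essentially the paper's.
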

\begin{proof}[Proof of Lemma \ref{lem:points}]
From \cite[(1.19)]{Armstrong_2021} we know that, writing for short $\Di = \Di(\bXN, \DD(\x, \ell))$:
\begin{equation}
\label{AS119}
\log \EN \left[  \exp\left(\frac{1}{\Ccb} \Di^2 \min\left(1, \frac{|\Di|}{\ell^2} \right)\right) \right] \leq \Ccb \ell^2.
\end{equation}
Now, if $|\Di| \leq 10 \ell^2$ then the number of points in $\DD(\x, \ell)$ is bounded by $\Cc \ell^2$. Otherwise, there is a large excess and $\Points^2 \leq 2 \Di^2$. We thus have:
\begin{multline*}
\EN \left[ \exp\left(\frac{1}{\Ccb} \Points^2(\bXN, \DD(x, \ell))\right)  \right] \\
= \EN \left[ \exp\left(\frac{1}{\Ccb} \Points^2(\bXN, \DD(x, \ell))\right) \ind_{|\Di| \leq 10 \ell^2} \right] + \EN \left[ \exp\left(\frac{1}{\Ccb} \Points^2(\bXN, \DD(x, \ell))\right) \ind_{|\Di| > 10 \ell^2} \right] \\
\leq
 \exp\left(\frac{\Cc^2}{\Ccb} \ell^4 \right) + \EN \left[ \exp\left(\frac{2}{\Ccb} \Di^2(\bXN, \DD(x, \ell))\right) \ind_{|\Di| > 10 \ell^2} \right] \\
 \leq  \exp\left(\frac{\Cc^2}{\Ccb} \ell^4\right)  + \EN \left[ \exp\left(\frac{2}{\Ccb} \Di^2 \min\left(1, \frac{|\Di|}{\ell^2} \right)\right)  \right],
\end{multline*}
and we can bound the last term using \eqref{AS119} (up to choosing a larger constant $\Ccb$ in \eqref{eq:bound_points_finite} than in \eqref{AS119}).

This proves the first inequality, which of course implies the second one.
\end{proof}

\subsubsection*{Bounds on linear statistics: Lipschitz case}
\newcommand{\Lell}{\mathcal{L}_{x, \ell}}
\begin{lemma}
\label{lem:fluct_Lipschitz}
Let $x$ be a point in $\R^2$ and $\ell > 0$ be some lengthscale such that $(x, \ell)$ satisfies \eqref{condi:LL}. Let $\Lell$ be the set of all functions $\varphi : \R^2 \to \R$ that are $\ell^{-1}$-Lipschitz and compactly supported on the disk $\DD(x, 4 \ell)$. We have, 
\begin{equation}
\label{eq:bound_fluct_Lipschitz}
\log \EN \left[ \sup_{\varphi \in \Lell} e^{\frac{1}{\Cc_\beta} \Fluct^2[\varphi]}  \right] \leq \ell^2, \quad 
\log \EN \left[ \sup_{\varphi \in \Lell} e^{\Fluct[\varphi]}  \right] \leq \Ccb \ell.
\end{equation}
\end{lemma}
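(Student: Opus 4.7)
The plan is to exploit the Coulomb / electric-field representation of $\Fluct[\varphi]$ for smooth $\varphi$, which converts the supremum over $\Lell$ into a deterministic bound in terms of a single random quantity (the local truncated Coulomb energy), whose exponential moments are controlled by the 2DOCP local laws of \cite{Armstrong_2021}.

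First, I would introduce a fixed $C^\infty$ mollifier $\eta_0$ supported on $\DD(0,1)$ with $\int \eta_0 = 1$, and for each $\varphi \in \Lell$ set $\tilde\varphi := \varphi * \eta_0$. Then $\tilde\varphi$ is smooth, supported in $\DD(x, 4\ell+1)$, with $|\tilde\varphi|_{\kk} \leq \Cc_{\kk} \ell^{-1}$ for all $\kk \geq 1$, and $\|\varphi - \tilde\varphi\|_\infty \leq \Cc \ell^{-1}$. This yields the uniform deterministic bound $|\Fluct[\varphi] - \Fluct[\tilde\varphi]| \leq \Cc \ell^{-1} \Points(\bXN, \DD(x, 5\ell)) + \Cc \ell$, whose square has exponential moment $\leq e^{\Cc \ell^2}$ by Lemma \ref{lem:points}. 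Next, for smooth compactly-supported $\tilde\varphi$, integration by parts against the Coulomb potential of $\bXN - \Leb$ gives $\Fluct[\tilde\varphi] = \tfrac{1}{2\pi} \int_{\R^2} \nabla \tilde\varphi \cdot \vec{E}_\eta(\bXN) + \mathrm{Error}$, where $\vec{E}_\eta$ denotes the local electric field obtained by smearing each point of $\bXN$ at a truncation scale $\eta$ (of order one), and the error is $O(\eta \cdot |\tilde\varphi|_{\2} \cdot \Points(\bXN, \DD(x, 5\ell)))$, itself controlled by Lemma \ref{lem:points}. Cauchy-Schwarz combined with the uniform estimate $\|\nabla \tilde\varphi\|_{L^2} \leq \ell^{-1} \sqrt{\Leb(\DD(x, 4\ell+1))} \leq \Cc$ then yields $\sup_{\varphi \in \Lell} |\Fluct[\tilde\varphi]| \leq \Cc \|\vec{E}_\eta\|_{L^2(\DD(x, 5\ell))} + \mathrm{Error}$, so the supremum is dominated by a random quantity depending only on $\bXN$.

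The remaining task is the exponential-moment estimate $\log \EN[\exp(\|\vec{E}_\eta\|_{L^2(\DD(x, 5\ell))}^2 / \Ccb)] \leq \Ccb \ell^2$, i.e.\ the local (renormalized) Coulomb energy is of order $\ell^2$ with Gaussian tails at scale $\ell$. Such an estimate is essentially part of the finite-volume energy analysis of \cite{Armstrong_2021} underlying their local laws (it is a quantitative form of $\dot{H}^{-1}$-control on $\bXN - \Leb$). Combining the three inputs yields the first inequality in \eqref{eq:bound_fluct_Lipschitz}; the second inequality follows by a standard Cauchy-Schwarz-type argument ($e^{\Fluct[\varphi]} \leq e^{\Fluct^2[\varphi] / (4 \Ccb)} \cdot e^{\Ccb}$ after optimization). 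The main obstacle is the precise quantitative import of the Coulomb-energy bound from \cite{Armstrong_2021}: one must pick the truncation scale $\eta$ so that the integration-by-parts error is negligible relative to $\ell$, while keeping the local energy finite and within the range where the exponential-moment estimate is effective. A secondary (harmless) concern is that condition \eqref{condi:LL} for $(x, \ell)$ must propagate to the slightly enlarged disk $\DD(x, 5\ell)$ introduced by mollification, which only costs inflating $\Cc_\beta$.
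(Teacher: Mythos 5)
Your overall strategy is the same as the paper's: a deterministic bound on $\Fluct[\varphi]$ via the (truncated) electric-field representation and integration by parts, followed by an application of the local laws of \cite{Armstrong_2021} for the local electric energy and for $\Points(\bXN, \cdot)$. The paper packages the deterministic step as Lemma \ref{lem:apriori}, working directly with Lipschitz $\varphi$ and taking the truncation at the nearest-neighbor distance $\rr(x)$ (which is precisely the truncation appearing in Proposition \ref{prop:LL}), whereas you first mollify $\varphi$ at scale $1$ and use a ``fixed $\eta$ of order one.'' The mollification is a harmless detour — the error is indeed controlled by $\Points$ — but you should be a bit careful that the exponential-moment estimate you need for $\|\vec{E}_\eta\|_{L^2(\DD(x,5\ell))}^2$ is the one with the nearest-neighbor truncation; a fixed truncation of order one is not literally what \cite{Armstrong_2021} (or the paper's Proposition \ref{prop:LL}) bounds, though the two can be compared.

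There is one genuine gap: your derivation of the second inequality from the first by the pointwise estimate $e^{\Fluct[\varphi]} \le e^{\Fluct^2[\varphi]/(4\Ccb)}\, e^{\Ccb}$ yields
\begin{equation}
\log \EN\left[\sup_{\varphi \in \Lell} e^{\Fluct[\varphi]}\right] \le \Ccb + \ell^2,
\end{equation}
which for $\ell$ large is strictly weaker than the claimed bound $\Ccb\,\ell$. The point of the second inequality is precisely that it is \emph{linear} in $\ell$, not quadratic. The correct route goes back to the deterministic bound $\sup_{\varphi}\lvert\Fluct[\varphi]\rvert \le \Cc\,\ell^{-1}\,\EnerPts(\bXN,\Omega) = \Cc\bigl(\Ener(\bXN,\Omega)^{1/2} + \ell^{-1}\Points(\bXN,\Omega)\bigr)$: one then controls the exponential moments of $\Ener^{1/2}$ and of $\ell^{-1}\Points$ separately, and both are of order $\ell$ with adequate tails (by Proposition \ref{prop:LL} and Lemma \ref{lem:points}, since $\Ener$ and $\Points$ are of order $\ell^2$ with sub-Gaussian tails at scale $\ell^2$, so their square root / $\ell^{-1}$-rescaling are of order $\ell$). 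This is what \cite[Lemma 2.4]{lambert2024law} does, as cited in the paper; the AM–GM shortcut does not give the right power of $\ell$.
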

In particular we get, using Jensen's inequality,
\begin{equation}
\label{ExpStatLip}
\EN\left[ \sup_{\varphi \in \Lell}\left|\Fluct[\varphi]\right| \right] \leq \Ccb \ell.
\end{equation}
We can thus uniformly control linear statistics of Lipschitz test functions within the class $\Lell$. The first bound in \eqref{eq:bound_fluct_Lipschitz} looks like \cite[(1.20)]{Armstrong_2021} but unfortunately it is not stated there in a uniform way. The second control in \eqref{eq:bound_fluct_Lipschitz} can be found \cite[Lemma 2.4]{lambert2024law}, relying on older ideas. We recall the proof in Section \ref{sec:FluctEnergy} as we also need here various small extensions.

 The next result improves on this control for linear statistics of smoother test functions (we get $\O(1)$ instead of $\O(\ell)$), but it is now stated function-wise (see \cite[Sec. 3.4.]{lambert2024law} for a uniform bound).

\subsubsection*{Bounds on linear statistics: smooth case.}
It is a remarkable feature of the 2DOCP that fluctuations of \emph{smooth enough} linear statistics are typically of order $1$ (as proven in \cite{leble2018fluctuations,MR4063572,serfaty2020gaussian}).
\begin{lemma}
\label{lem:fluct_smooth}
Let $\varphi$ be a test function of class $C^3$, compactly supported in some disk $\DD(x, 4\ell)$ with $(x, \ell)$ satisfying \eqref{condi:LL}. Assume that $|\varphi|_{\kk} \leq \Cc_\varphi \ell^{-\kk}$ for $\kk = 1,2, 3$ for some constant $\Cc_\varphi$. Then for $|s| \leq \frac{\ell^2}{\Cc_\beta \Cc_\varphi}$ we have
\begin{equation}
\label{eq:bound_Fluct_smooth}
\left|\log \EN \left[ e^{s \Fluct[\varphi]}  \right]\right| \leq \Cc_\beta \left(|s| \Cc_\varphi + s^4 \Cc_\varphi^4 \right).
\end{equation}
\end{lemma}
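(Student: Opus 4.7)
The plan is to perform a change of variables in the partition function using a transport map adapted to $\varphi$, in the spirit of \cite{leble2018fluctuations,MR4063572,serfaty2020gaussian}. First, I would construct a smooth vector field $v : \R^2 \to \R^2$ satisfying $\dive(v) = \varphi$ on $\DD(x, 4\ell)$, with $\supp(v) \subset \DD(x, \Cc \ell)$ (still in the bulk thanks to \eqref{condi:LL}) and inherited derivative bounds $|v|_\kk \leq \Cc \, \Cc_\varphi \, \ell^{1-\kk}$ for $\kk = 0, 1, 2, 3$. Such a $v$ is obtained by convolving $\varphi$ with the two-dimensional Newtonian kernel and then applying an appropriate cutoff supported in the bulk.

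Next, for $|s|$ in the allowed range, the map $\Phi_s := \Id + s v$ is a $C^2$-diffeomorphism from $\LN$ to itself. Writing $\bXN^{(s)} := \sum_{i=1}^N \delta_{\Phi_s(x_i)}$ and substituting $x_i = \Phi_s(y_i)$ in the integral defining $\KNbeta$ yields the identity
\begin{equation*}
\EN \left[ \exp\left( -\beta \bigl( \FN(\bXN^{(s)}) - \FN(\bXN) \bigr) + \sum_{i=1}^N \log \det D\Phi_s(x_i) \right) \right] = 1.
\end{equation*}
I would then expand the exponent in powers of $s$. Using $\dive(v) = \varphi$ and integration by parts against the background $\Leb$, the first-order term collapses to $-s \beta \Fluct[\varphi](\bXN)$ plus negligible boundary contributions. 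The second-order term is an anisotropy quadratic form $s^2 \iint \Ka(x,y)\, d(\bXN - \Leb)(x)\, d(\bXN - \Leb)(y)$, where the kernel $\Ka$ is built from the mixed derivatives of $v$ and satisfies $|\Ka|_{\1+\1} \leq \Cc \, \Cc_\varphi^2 \, \ell^{-2}$ (cf.~\eqref{Ka1p1}). The remaining $s^k$ remainders, $k \geq 3$, are polynomials in $sv$ and its derivatives integrated against $\bXN$ restricted to $\supp(v)$, and are controlled by combining Lemma~\ref{lem:points} (bounding $\Points(\bXN, \DD(x, \Cc\ell))$) with Lemma~\ref{lem:fluct_Lipschitz} (bounding Lipschitz linear statistics in the same region).

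The main difficulty I expect is the treatment of the quadratic anisotropy at order $s^2$: a direct pointwise bound only produces $\Cc_\beta \, s^2 \, \Cc_\varphi^2$, whereas the statement requires the weaker $\Cc_\beta \, s^4 \, \Cc_\varphi^4$ contribution valid throughout the full range $|s| \leq \ell^2/(\Cc_\beta \Cc_\varphi)$. The standard way around this is to iterate the transport, replacing $v$ by a corrected field $v + s w$ where $w$ is chosen so that the $s^2$ contribution cancels in expectation to leading order; this pushes the first genuine correction to $s^3$ and, after a symmetrization in $s \mapsto -s$, to $s^4$. Combining this corrected identity with Jensen's inequality applied on both sides then yields
\begin{equation*}
\left| \log \EN \bigl[ e^{s \beta \Fluct[\varphi]} \bigr] \right| \leq \Cc_\beta \bigl( |s| \, \Cc_\varphi + s^4 \, \Cc_\varphi^4 \bigr),
\end{equation*}
which produces the claim after absorbing $\beta$ into the constants.
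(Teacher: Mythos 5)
The paper does not prove this lemma from scratch: the entire proof is the one-line statement ``This is a re-writing of \cite[Cor.~2.1]{serfaty2020gaussian} with our scaling convention.'' Your proposal instead sketches a from-scratch derivation via a change of variables adapted to $\varphi$, i.e.~the transport method that underlies the cited paper, so the \emph{route} is the same one -- but you are re-deriving a black-box result that the author deliberately imports.

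Within your sketch there are two substantive issues. First, you claim that after combining the first-order energy expansion with the Jacobian and integrating by parts ``the first-order term collapses to $-s\beta\Fluct[\varphi](\bXN)$.'' That is not what happens in the transport expansion used here (see the paper's own formula~\eqref{eq:comparaisonFLa}): the first-order term in the energy, $\Ani[sv, \bXN]$ (the ``anisotropy''), is a genuine, non-negligible random quantity, linear in $s$, and is precisely what produces the $|s|\,\Cc_\varphi$ contribution to~\eqref{eq:bound_Fluct_smooth}. The fluctuation $\Fluct[\varphi]$ arises from the Jacobian term $\sum_i \log\det D\Phi_s(x_i) = s\int\varphi\,\dd\bXN + \O(s^2) = s\,\Fluct[\varphi] + s\int\varphi\,\dd\Leb + \O(s^2)$; it does not come from a cancellation with the energy expansion. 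Your sketch omits the anisotropy term and then mislabels a second-order quadratic form as ``anisotropy,'' which misrepresents the structure of the argument.

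Second, the iteration you propose to ``push the first genuine correction to $s^3$ and, after a symmetrization, to $s^4$'' is unnecessary and should be cut. The elementary inequality $t^2 \leq t + t^4$ for all $t\geq 0$ (applied with $t = |s|\,\Cc_\varphi$) shows that a direct $\Cc_\beta\, s^2\,\Cc_\varphi^2$ bound on the second-order contribution is already absorbed by the right-hand side $\Cc_\beta(|s|\,\Cc_\varphi + s^4\,\Cc_\varphi^4)$. The form of the claimed bound does \emph{not} require the second-order term to vanish or be cancelled; you have manufactured a difficulty that the statement does not pose, and the correction step you invoke to resolve it is only vaguely described. If you want to prove the lemma from first principles rather than cite it, you need to restore the anisotropy term in the expansion (with the exponential-moment control as in~\eqref{eq:AniErrorExpMom}), drop the iteration, and make the remainder estimates (which you wave at via Lemmas~\ref{lem:points} and~\ref{lem:fluct_Lipschitz}) precise enough to actually control the higher-order Jacobian and energy corrections over the full range $|s|\leq \ell^2/(\Cc_\beta\Cc_\varphi)$.
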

\begin{proof}
This is a re-writing of \cite[Cor. 2.1]{serfaty2020gaussian} with our scaling convention.
\end{proof}
In particular we get (taking $s = \frac{\pm 1}{\Cc_\beta \Cc_\varphi}$ and applying Jensen's inequality): 
\begin{equation}
\label{ExpFluctSmooth}
\EN\left[ \left| \Fluct[\varphi] \right| \right] \leq \Cc'_\beta \Cc_\varphi.
\end{equation}

\subsubsection*{Bounds on “bilinear” statistics.}
The following statement is new as such, but the method has been used previously. Recall the “mixed derivative” notation $|\cdot|_{\1 + \1}$ from \eqref{Ka1p1}.
\newcommand{\LLLL}{\mathcal{L}^{(2)}_{L_1, L_2}}
\begin{lemma}
\label{lem:bilinear}
Let $L_1, L_2$ be two lengthscales with $\rho_\beta \leq L_1 \leq L_2$, let $\LLLL$ be the set of all functions $\Ka$ which are in $C^2(\R^2 \times \R^2)$ with compact support in $\DD(x, L_1) \times \DD(y, L_2)$ where both $(x, L_1)$ and $(y, L_2)$ satisfy \eqref{condi:LL}, and such that:
\begin{equation*}
|\Ka|_{\1 + \1} \leq (L_1 L_2)^{-1}.
\end{equation*}
Then for all $s$ such that $|s| \leq \frac{1}{\Cc_\beta} \frac{L_1}{L_2}$:
\begin{equation}
\log \EN \left[ \sup_{\Ka \in \LLLL} \exp\left( s \iint_{\LN \times \LN} \Ka(x,y) \dd \left(\bXN - \Leb  \right)(x) \dd \left(\bXN - \Leb  \right)(y) \right) \right] \leq |s| \Cc_\beta L_1 L_2.
\end{equation}
\end{lemma}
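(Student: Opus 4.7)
The plan is to reduce $\sup_{\Ka \in \LLLL} |Z_\Ka|$, where $Z_\Ka := \iint \Ka \, d(\bXN-\Leb)(x) d(\bXN-\Leb)(y)$, \emph{pointwise in $\bXN$} to the product of two scalar Lipschitz fluctuations at scales $L_1$ and $L_2$, and then to combine Lemma~\ref{lem:fluct_Lipschitz} with a weighted Cauchy--Schwarz whose weight is calibrated to the anisotropy ratio $L_2/L_1$.

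\emph{Step 1 (pointwise decomposition).} Fix $\bXN$ and $\Ka \in \LLLL$, and set
\[
\phi_\Ka(y) := \int \Ka(x, y) \, d(\bXN - \Leb)(x),
\]
so that $Z_\Ka = \Fluct[\phi_\Ka]$. The function $\phi_\Ka$ is supported in $\DD(y, L_2)$. For each fixed $y$, the hypothesis $|\Ka|_{\1+\1} \leq (L_1 L_2)^{-1}$ makes $L_2 \partial_{y_j} \Ka(\cdot, y)$ into an $L_1^{-1}$-Lipschitz function of $x$ supported in $\DD(x, L_1)$, hence an element of $\mathcal{L}_{x, L_1}$. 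Writing $A(\bXN) := \sup_{\psi \in \mathcal{L}_{x, L_1}} |\Fluct[\psi]|$, this forces $|\partial_{y_j} \phi_\Ka(y)| \leq L_2^{-1} A$, so $\|\nabla \phi_\Ka\|_\infty \leq \sqrt 2\, L_2^{-1} A$ and $\phi_\Ka/(\sqrt 2 A) \in \mathcal{L}_{y, L_2}$. With $B(\bXN) := \sup_{\psi \in \mathcal{L}_{y, L_2}} |\Fluct[\psi]|$, we obtain the uniform bound $\sup_{\Ka \in \LLLL} |Z_\Ka(\bXN)| \leq \sqrt 2\, A(\bXN) B(\bXN)$, and therefore $\sup_{\Ka} \exp(s Z_\Ka) \leq \exp(\sqrt 2 |s| AB)$ pointwise in $\bXN$.

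\emph{Step 2 (exponential moment bound).} Apply the weighted AM--GM inequality $|s| AB \leq \frac{|s| L_2}{2 L_1} A^2 + \frac{|s| L_1}{2 L_2} B^2$ with weight $\alpha := L_2/L_1$, followed by Cauchy--Schwarz,
\[
\EN\bigl[\exp(\sqrt 2 |s| AB)\bigr] \leq \EN\bigl[\exp(\sqrt 2 |s| L_2 A^2 / L_1)\bigr]^{1/2} \EN\bigl[\exp(\sqrt 2 |s| L_1 B^2 / L_2)\bigr]^{1/2}.
\]
The hypothesis $|s| \leq L_1/(\Cc_\beta L_2)$ together with $L_1 \leq L_2$ makes both coefficients $\sqrt 2 |s| L_2/L_1$ and $\sqrt 2 |s| L_1/L_2$ bounded by $1/\Cc_\beta'$, where $\Cc_\beta'$ denotes the constant from the first bound of \eqref{eq:bound_fluct_Lipschitz} (after enlarging $\Cc_\beta$ if necessary). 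Jensen's inequality applied to $t \mapsto t^{c \Cc_\beta'}$ then yields $\EN[\exp(c A^2)] \leq \EN[\exp(A^2/\Cc_\beta')]^{c \Cc_\beta'} \leq e^{c \Cc_\beta' L_1^2}$ for every $c \in (0, 1/\Cc_\beta']$, and analogously for $B$ with $L_2$ in place of $L_1$. Substituting produces each factor $\leq \exp(\Cc_\beta |s| L_1 L_2)$, whose geometric mean is again of the same form, giving the claim after absorbing numerical constants into $\Cc_\beta$.

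\emph{Main difficulty.} The only delicate step is the calibration of the AM--GM weight: the isotropic choice $\alpha = 1$ produces a bound of the form $(L_1^2 + L_2^2)/2$, which is much weaker than $|s| L_1 L_2$ when $L_1 \ll L_2$, and moreover has no linear dependence on $s$. The weight $\alpha = L_2/L_1$ is precisely what makes the two Gaussian-in-$A$ and Gaussian-in-$B$ factors collapse to equal expressions of the form $\exp(\Cc_\beta |s| L_1 L_2)$, converting the sub-Gaussian estimates on $A$ and $B$ into a bound that is \emph{linear} in $|s|$ with the correct anisotropic prefactor $L_1 L_2$. Once the right weight is identified, the rest of the argument reduces to Jensen's inequality and the already-proven Lemma~\ref{lem:fluct_Lipschitz}.
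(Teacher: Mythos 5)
Your proof is correct and follows essentially the same route as the paper's. Both arguments are structured as: (i) a pointwise, deterministic reduction of the bilinear statistic to a product of two one-variable quantities via the auxiliary map $\phi_\Ka(y) = \int \Ka(x,y)\,\dd(\bXN-\Leb)(x)$ (this is exactly the paper's auxiliary map ``$A$'' in the proof of the two-variable part of Lemma~\ref{lem:apriori}), (ii) a weighted AM--GM calibrated to the ratio $L_2/L_1$, and (iii) the known exponential-moment controls. The only difference is one of packaging: the paper pushes the deterministic step all the way to the electric-energy quantities $\EnerPts(\bXN,\Omega_1)\cdot\EnerPts(\bXN,\Omega_2)$ (via \eqref{eq:apriori_two_var}) and then invokes the local laws, whereas you stop at the Lipschitz fluctuation suprema $A$, $B$ and re-invoke Lemma~\ref{lem:fluct_Lipschitz} directly — a slightly more modular presentation, since it avoids re-entering the electric formalism, but mathematically equivalent because Lemma~\ref{lem:fluct_Lipschitz} itself is proven by bounding $A$, $B$ in terms of $\EnerPts$.
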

\begin{proof}
The proof is essentially the same as for Lemma \ref{lem:fluct_Lipschitz}, we present it in Section \ref{sec:FluctEnergy}.
\end{proof}

\subsubsection*{Bounds on linear statistics hitting the hard edge.}
\newcommand{\bphi}{\bar{\varphi}}
\newcommand{\LLL}{\mathcal{L}_{rot}}
\newcommand{\LLN}{\mathcal{L}_N}
\newcommand{\LLNR}{\mathcal{L}_{N, rad}}
In the setting of Lemma \ref{lem:fluct_Lipschitz}, the test functions $\varphi$ are compactly supported within the support $\LN$ of the system. However, in the decomposition into dyadic scales that we use below, the “last level” will contain the boundary of $\LN$, and we need to make sure that the conclusions are still valid.

\begin{lemma}[Lipschitz linear statistics hitting the hard edge]
\label{lem:LipschitzHardWall}
Let $\LLN$ be the set of all Lipschitz functions $\varphi : \R^2 \to \R$ with $|\varphi|_\0 \leq 1$ and $|\varphi|_\1 \leq N^{-\hal}$. We have;
\begin{equation}
\label{HardWallLipschitz}
\EN\left[ \sup_{\varphi \in \LLN} e^{\Fluct[\varphi \1_{\LN}] } \right] \leq \Cc_\beta \sqrt{N}, \quad \EN \left[ \sup_{\varphi \in \LLN} |\Fluct[\varphi \1_{\LN}]| \right] \leq \Cc_\beta \sqrt{N}.
\end{equation}
\end{lemma}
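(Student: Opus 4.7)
The plan is to reduce this claim to Lemma~\ref{lem:fluct_Lipschitz} by absorbing the indicator $\ind_{\LN}$ into a Lipschitz cutoff, and then using a dyadic decomposition to sum contributions across scales.

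\emph{Step 1 (smoothing the indicator).} I let $\chi$ be a smooth function with $\chi \equiv 1$ on $\LN$, supported in $\DD(0, R_N + 1)$, and satisfying $|\chi|_\1 \leq \Cc$. Since $\bXN \subset \LN$ and $\chi \equiv 1$ there, the random parts of $\Fluct[\varphi \ind_{\LN}]$ and $\Fluct[\varphi \chi]$ agree; their difference reduces to the deterministic correction $-\int_{\R^2 \setminus \LN} \varphi \chi \, \dd \Leb$, bounded in absolute value by $|\varphi|_\0 \cdot \Leb(\DD(0, R_N+1) \setminus \LN) \leq \Cc\sqrt{N}$. It therefore suffices to establish the claimed bounds with $\varphi \ind_{\LN}$ replaced by $\varphi \chi$.

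\emph{Step 2 (dyadic decomposition and bulk contribution).} I write $\varphi \chi = \sum_{i \geq 0} \psi_i$ with $\psi_i := (\varphi \chi) \chi_i$, using the dyadic partition of unity from Section~\ref{sec:notation}; only the $O(\log N)$ indices with $2^{i-1} \leq R_N + 1$ are non-trivial. Each $\psi_i$ satisfies $|\psi_i|_\0 \leq 1$ and, by \eqref{assumchii} together with $|\varphi|_\1 \leq N^{-\hal}$, $|\psi_i|_\1 \leq \Cc' \cdot 2^{-i}$ whenever $2^i \leq \sqrt{N}$. For bulk indices, namely those $i$ with $2^{i+1} \leq R_N - \Cc_\beta N^{1/4}$, I cover the dyadic annulus supporting $\chi_i$ by $O(1)$ disks $\DD(y_k, \Cc \cdot 2^i)$ each satisfying \eqref{condi:LL}, use a subordinate partition of unity to split $\psi_i$, and apply Lemma~\ref{lem:fluct_Lipschitz} at scale $\ell = \Cc \cdot 2^i$ to each fragment. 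This yields $\log \EN[\sup_{\varphi \in \LLN} e^{\Fluct[\psi_i]}] \leq \Cc_\beta \cdot 2^i$. Aggregating via Hölder's inequality over the $O(\log N)$ dyadic indices, the bulk contribution to $\log \EN[\sup_{\varphi} e^{\Fluct[\varphi \chi]}]$ is at most $\sum_{2^i \leq R_N} \Cc_\beta 2^i = O(\sqrt{N})$.

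\emph{Step 3 (boundary pieces --- the main obstacle).} The remaining $O(1)$ outermost dyadic indices $i$ with $2^i \asymp R_N$ have annuli straddling $\partial \LN$, so \eqref{condi:LL} fails at scale $2^i$. This is the main technical obstacle: a crude total-variation estimate gives only $O(N)$. To recover the target $O(\sqrt{N})$, I adapt the energy-shift argument underlying the proof of Lemma~\ref{lem:fluct_Lipschitz} (see Section~\ref{sec:FluctEnergy}): performing the push-forward $x \mapsto x + s \vec{v}(x)$ on $\LN$ for $\vec{v}$ a smoothed version of $\nabla \psi_i$ chosen tangent to $\partial \LN$ (so that the map preserves $\LN$), one tracks the Taylor expansion of $\FN$ and obtains a quadratic-in-$s$ control bounded by $\int |\nabla \psi_i|^2 \dd \Leb$. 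Using $|\nabla \psi_i|^2 \leq \Cc^2$ on a support of area $O(N)$, the standard Chernoff optimization then yields $\log \EN[e^{\Fluct[\psi_i]}] \leq \Cc_\beta \sqrt{N}$ per boundary piece. Passing to the supremum over $\LLN$ is accomplished by a finite $\epsilon$-net argument based on the Ascoli--Arzelà compactness of $\LLN$ restricted to $\DD(0, R_N + 1)$, and the $L^1$ bound in \eqref{HardWallLipschitz} then follows from the exponential one by Jensen's inequality.
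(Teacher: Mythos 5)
The paper's proof of this lemma is essentially a one-liner: it applies the deterministic estimate of Lemma~\ref{lem:apriori} directly. The key observation (made explicit at the end of the proof of Lemma~\ref{lem:apriori}, ``we never used the fact that the test functions were supported within $\LN$ or not'') is that for $\bXN \subset \LN$ one has $\Fluct[\varphi\1_{\LN}](\bXN) = \int \varphi\, \dd(\bXN - \1_{\LN}\Leb)$, which is \emph{exactly} the left-hand side of \eqref{eq:apriori_one_var} with $\varphi$ itself — not $\varphi\1_{\LN}$ — as the test function. One therefore gets $|\Fluct[\varphi\1_{\LN}]| \le \Cc\,|\varphi|_\1\, \EnerPts(\bXN, \Omega)$ with $|\varphi|_\1 \le N^{-1/2}$ and $\Omega$ a fixed neighborhood of $\LN$, uniformly over $\LLN$. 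Plugging in $|\Omega|^{1/2} = \O(\sqrt N)$, $\Points(\bXN,\Omega) = N$, the global energy law, and the Jensen trick $\Ener^{1/2} \le \Ener/\sqrt N + \sqrt N$ gives the claimed $\Cc_\beta\sqrt N$ directly — no dyadic decomposition, no boundary-smoothing, no $\epsilon$-net.

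Your plan goes a genuinely different route, and the critical step has a gap. By replacing $\1_{\LN}$ with a smooth cutoff $\chi$ in Step~1, you create a function $\varphi\chi$ whose Lipschitz constant is $\O(1)$ (not $N^{-1/2}$) in the last dyadic annulus, precisely because $|\chi|_\1 = \O(1)$ there. This is why the innermost scales are fine but the outermost ones stall at $\O(N)$: the smoothing \emph{destroys} the small Lipschitz constant that drives the whole estimate. The paper avoids this by never smoothing — it keeps $\varphi$ (with its $N^{-1/2}$ Lipschitz constant) and puts the hard indicator on the reference measure side, where Lemma~\ref{lem:apriori} handles it automatically.

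Step~3 is where the gap lives. The ``energy-shift/push-forward'' argument you gesture at is \emph{not} what is underneath Lemma~\ref{lem:fluct_Lipschitz}; that proof is an integration-by-parts/Cauchy–Schwarz estimate (Section~\ref{sec:FluctEnergy}), not a transport-of-energy argument. Moreover, for your push-forward $x \mapsto x + s\vec v(x)$ to preserve $\LN$ you need $\vec v$ tangent to $\partial\LN$, while for it to produce $\Fluct[\psi_i]$ as the leading-order energy shift you need $\vec v \approx \nabla\psi_i$. These two conditions are incompatible for a generic $\varphi \in \LLN$: nothing forces $\nabla\varphi$ to be tangent to $\partial\LN$. (This tangency is exactly why Lemma~\ref{lem:SmoothHardWall} — the \emph{radial} case — is a separate and genuinely harder statement requiring the hyperuniformity input of \cite{leble2021two}.) As written, the boundary piece of your proof is not salvageable along the lines you sketch, and the Hölder aggregation over $O(\log N)$ indices plus the Ascoli–Arzelà $\epsilon$-net are additional unverified complications that simply do not appear in the paper's two-line argument.
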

\begin{proof}
The proof is in fact the same as for Lemma \ref{lem:fluct_Lipschitz}, see Section \ref{sec:FluctEnergy}.
\end{proof}

Finally, we will need the following statement, which is new. It gives a slight but crucial improvement on the previous one when the test function is assumed to have \emph{radial symmetry}, and relies on the difficult result of \cite{leble2021two}.

\begin{lemma}[Radial linear statistics hitting the hard edge]
\label{lem:SmoothHardWall}
Let $\LLNR$ be the set of all \emph{radially symmetric} Lipschitz functions $\varphi : \R^2 \to \R$ with $|\varphi|_\0 \leq 1$ and $|\varphi|_\1 \leq N^{-\hal}$. We have, with a $o(\sqrt{N})$ depending on~$\beta$:
\begin{equation}
\label{HardWallSmooth}
\EN \left[ \sup_{\varphi \in \LLNR} |\Fluct[\varphi \1_{\LN}]|\right] = o\left(\sqrt{N}\right),
\end{equation}
\end{lemma}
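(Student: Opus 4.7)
The plan is to exploit radial symmetry via a one-dimensional integration by parts, which reduces the statement to an integrated radial-discrepancy estimate. Write $\varphi(x) = \bphi(|x|)$ with $|\bphi|_\infty \le 1$ and $|\bphi'|_\infty \le N^{-1/2}$, and introduce
$$D(r) := \Points(\bXN, \DD(r)) - \pi r^2, \qquad r \in [0, \RN].$$
Under $\PNbeta$ all $N$ particles lie in $\LN$, so $D(0) = D(\RN) = 0$ deterministically. A Stieltjes integration by parts in $r$, in which the boundary term vanishes thanks to $\pi\RN^2 = N$, gives
$$\Fluct[\varphi \1_{\LN}](\bXN) = -\int_0^{\RN} \bphi'(r)\, D(r)\, dr,$$
and hence the uniform-in-$\varphi$ bound
$$\sup_{\varphi \in \LLNR} \bigl|\Fluct[\varphi \1_{\LN}]\bigr| \;\le\; \frac{1}{\sqrt{N}} \int_0^{\RN} |D(r)|\, dr.$$
It thus suffices to establish $\EN\bigl[\int_0^{\RN} |D(r)|\, dr\bigr] = o(N)$.

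Next, I split at an intermediate scale $\rho_N$ chosen with $\Ccb N^{1/4} \ll \rho_N \ll \sqrt{N}$ (for concreteness $\rho_N = N^{3/8}$). On the bulk part $r \in [\rho_\beta, \RN - \rho_N]$, the pair $(0,r)$ satisfies condition \eqref{condi:LL}, so that $D(r) = \Fluct[\1_{\DD(r)}]$ can be controlled by Lipschitz inner/outer approximations of $\1_{\DD(r)}$ on microscopic scale, combined with the hyperuniform-type variance bounds obtainable from the tools of Section \ref{sec:ResFinite} (Lemmas \ref{lem:fluct_Lipschitz}--\ref{lem:bilinear}). This yields an estimate of the form $\EN[|D(r)|] \le \Ccb \sqrt{r}$ (any polynomial bound with exponent strictly less than $1$ would do), so that
$$\int_0^{\RN - \rho_N} \EN[|D(r)|]\, dr \;\le\; \Ccb\, \RN^{3/2} \;=\; \O(N^{3/4}) \;=\; o(N).$$

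The genuine difficulty is the edge part $r \in (\RN - \rho_N, \RN)$, where condition \eqref{condi:LL} fails and the bulk fluctuation lemmas are not directly applicable. A naive ``linear in area'' variance bound for the thin annular region $\LN \setminus \DD(r)$ of width $t = \RN - r$ would only give $\Var(D(r)) = \O(\RN\, t)$; upon integration this produces a contribution of order $\sqrt{\RN}\,\rho_N^{3/2}$ which lies on the borderline and, unless one knows that this naive bound is actually valid through the hard wall, does not suffice. The crucial additional input is the finite-volume approximate translation-invariance estimate of \cite{leble2021two}, alluded to in the introduction: by comparing the 2DOCP with its small translates, one obtains a refined control on boundary discrepancies that beats the Gaussian prediction and closes the gap to $o(N)$.

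The main obstacle is therefore the boundary contribution, which cannot be handled by the bulk-scale tools alone and requires importing \cite{leble2021two}. The gain over Lemma \ref{lem:LipschitzHardWall} is real but delicate: radial symmetry is what collapses the two-dimensional fluctuation of $\Fluct[\varphi \1_{\LN}]$ onto the one-dimensional profile $r \mapsto D(r)$, and it is only this collapse, combined with the deterministic endpoint identity $D(\RN) = 0$ and the refined boundary control of \cite{leble2021two}, that upgrades the generic $\O(\sqrt{N})$ ceiling of Lemma \ref{lem:LipschitzHardWall} into the $o(\sqrt{N})$ asserted here.
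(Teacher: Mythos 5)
Your integration by parts is correct and is essentially the same core reduction the paper uses: the paper gets to the radial discrepancy integral by writing $\Fluct[\varphi] = \frac{1}{2\pi}\int\nabla\varphi\cdot\nHH$ and then applying the divergence theorem on each circle $\partial\DD(0,r)$, which gives $\frac{1}{2\pi}\int_{\partial\DD(0,r)}\nHH\cdot\vec{n} = \Di(\bX, \DD_r) = D(r)$. So both arguments reduce the lemma to showing $\EN\left[\int_0^{\RN}|D(r)|\,dr\right]=o(N)$ and split the range of $r$ at an intermediate edge scale. But the allocation of tools to the two ranges in your proposal is essentially the reverse of the paper's, and the bulk step contains a real gap.

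For the bulk range you claim $\EN[|D(r)|]\leq\Ccb\sqrt{r}$ from Lipschitz inner/outer approximation plus the lemmas of Section~\ref{sec:ResFinite}. This does not follow: a Lipschitz, scale-$1$ regularization of $\1_{\DD(r)}$ differs from the indicator on an annulus of width $\O(1)$, so the approximation error costs $\O(r)$ points in expectation, yielding only $\EN[|D(r)|]=\O(r)$, which already exhausts the $\O(N)$ budget. Lemma~\ref{lem:fluct_smooth} cannot be applied directly because a smoothed indicator of a large disk is not supported in a single disk $\DD(x,4\ell)$ satisfying \eqref{condi:LL}; and the variance bound $\Var(D(r))=\O(r)$ you implicitly rely on is a genuine hyperuniformity statement that Section~\ref{sec:ResFinite} does not deliver. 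The paper explicitly flags this (``the usual controls on $\Di$ found on \cite{Armstrong_2021} are not enough'') and invokes the hyperuniformity result of \cite{leble2021two} exactly here, i.e.\ \emph{for the bulk range} $r\leq\RN-\delta\sqrt{N}$ with $\delta$ fixed, obtaining $\EN[|\Di(\bX,\DD_r)|]=o(r)$ with a rate depending on $\delta$.

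You instead place \cite{leble2021two} at the edge, which is backwards and also not what that reference provides: its bulk hyperuniformity estimate is stated for disks whose boundary stays a macroscopic distance $\delta\sqrt{N}$ from $\partial\LN$, so it does not reach through the hard wall. The paper's edge treatment is entirely different and does not need any hyperuniformity near the wall: it keeps the electric-field representation, regularizes $\nHH$ at truncation scale $\eta$, applies Cauchy--Schwarz on the thin annulus of area $\O(\delta N)$, and controls $\int_{\R^2}|\nHH_\veta|^2$ via the \emph{global} (not local) energy law, of order $\O_\eta(N)$. Together with the truncation error this gives an edge contribution $\O(\sqrt{\delta}\sqrt{N}) + o_\eta(\sqrt{N})$, after which $\eta$ and then $\delta$ are sent to zero. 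Your ``variance linear in area'' guess for $\Var(D(r))$ when $r$ is close to $\RN$ is never established in the paper, would require a separate argument valid up to the hard wall, and is left in your proposal as an unproven supposition; the electric-field global law is precisely the device that replaces it.
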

We postpone the proof to Section \ref{sec:ProofHardWall}.

\subsubsection*{Bounds on correlation functions.}
\newcommand{\Cckb}{\Cc_{k, \beta}}
\newcommand{\chipr}{\chi_r}
\newcommand{\chimr}{\chi_{r/2}}
We will use the following result of \cite{thoma2022overcrowding}. For $k \geq 1$, define the $k$-point correlation function of $\PNbeta$ as:
\begin{equation}
\label{def:rho_k}
\rho_{k, \PNbeta}(y_1, \dots, y_k) = \frac{N!}{(N-k)!} \lim_{r \to 0} \frac{1}{\pirk} \EN\left(\prod_{i=1}^n \ind_{\DD(y_i, r)}(x_i) \right),
\end{equation}
or equivalently, by duality, as:
\begin{equation*}
\EN\left[ \sum_{i_1 \neq i_2 \dots \neq i_k} F(x_{i_1}, \dots, x_{i_k}) \right] =: \int \rho_{k, \PNbeta}(y_1, \dots, y_k) F(y_1, \dots, y_k) \dd y_1 \dots \dd y_k
\end{equation*}
for all continuous, compactly supported test functions $F : \R^k \to \R$.
\begin{lemma}
\label{lem:bound_CF}
For each $k \geq 1$, there exists a constant $\Cckb$ such that for all $N \geq 1$:
\begin{equation}
\label{bound:CF}
\sup_{y_1, \dots, y_k} \rho_{k, \PNbeta}(y_1, \dots, y_k) \leq \Cckb.
\end{equation} 
\end{lemma}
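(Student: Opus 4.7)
The plan is to reduce the pointwise bound on $\rho_{k,\PNbeta}$ to an overcrowding-type estimate. From the definition \eqref{def:rho_k} and the permutation-invariance of $\PNbeta$, it suffices to show that for some constant $C_{k,\beta}$, uniformly in $N$, in $y_1, \dots, y_k \in \LN$, and in all sufficiently small $r > 0$,
\begin{equation*}
\PNbeta\bigl[x_i \in \DD(y_i, r),\ i = 1, \dots, k\bigr] \leq C_{k,\beta} (\pi r^2)^k.
\end{equation*}
I would then split according to the minimum pairwise separation $\delta := \min_{i \neq j} |y_i - y_j|$.

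In the well-separated regime $\delta \geq 1$, I would proceed by iterated conditioning. Writing the joint density of $(x_1, \dots, x_N)$ as a Vandermonde times a confinement, conditioning on $x_1 \in \DD(y_1, r)$ and integrating yields an effective Coulomb gas on the remaining $N-1$ particles with an added logarithmic well of strength $\beta$ near $y_1$. Because the $y_i$'s are separated, I could iteratively estimate the conditional marginal density of $x_2$ near $y_2$, then $x_3$ near $y_3$, and so on, using the first-moment/local-law control from Lemma~\ref{lem:points} for each conditional gas. Each step contributes a factor of $\mathcal{O}(r^2)$, giving the target bound.

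In the clustered regime $\delta < 1$, the event that each $\DD(y_i, r)$ contains a particle forces at least $k$ points of $\bXN$ to lie in a common disk of radius $\mathcal{O}(\delta + r)$. Here I would invoke the overcrowding estimate of \cite{thoma2022overcrowding}, which gives a decay roughly of the form $(\delta + r)^{2k + \beta k(k-1)}$ for the probability of finding $k$ points in a disk of radius $\delta+r$. For small $\delta + r$ this strongly dominates $(\pi r^2)^k$. The Vandermonde factor $\prod_{i<j}|y_i-y_j|^\beta$ latent in the joint density is precisely what produces the extra decay needed to beat the $r^{-2k}$ prefactor as the $y_i$'s merge.

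The main obstacle is making the conditional-gas analysis of the first regime rigorous: one must control the ratio of partition functions $Z_{N-1}^{(y_1)}/Z_N$ and its iterates, uniformly in $N$ and in the positions. This amounts to a screening or splitting step showing that inserting a few extra log-singularities into the external potential changes the free energy by only a bounded amount, and is precisely where the local laws down to the microscopic scale of \cite{Armstrong_2021} are essential — they guarantee that the modified confinement remains compatible with an equilibrium density close to uniform on $\LN$.
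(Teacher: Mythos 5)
The paper does not prove Lemma~\ref{lem:bound_CF} at all: it states it as an imported result from \cite{thoma2022overcrowding} (see the sentence ``We will use the following result of \cite{thoma2022overcrowding}'' immediately preceding the lemma). So there is no in-paper argument to compare against; what needs checking is whether your proposed proof could stand on its own.

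It cannot, as written, because the clustered-regime step is not sound. You bound the probability of the event $\{x_i \in \DD(y_i,r),\ i = 1,\dots,k\}$ by the probability of $k$ points falling in a disk of radius $\mathcal{O}(\delta+r)$ and invoke overcrowding. But the overcrowding bound at scale $\delta+r$ is a function of $\delta+r$ only, and for fixed $\delta>0$ it stays bounded away from zero as $r\to 0$; dividing by $(\pi r^2)^k$, as the definition \eqref{def:rho_k} requires, therefore produces a ratio that \emph{diverges} like $r^{-2k}$ rather than staying bounded. The overcrowding estimate controls the $\delta$-dependence of $\rho_k$ near a coincidence, not the $r\to 0$ normalization; the $(\pi r^2)^k$ factor has to be cancelled by an argument that is local around each $y_i$ separately (this is where your well-separated idea belongs), not by a single-disk tail bound at scale $\delta$. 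The Vandermonde intuition you state at the end is correct, but it must be used on the joint density $\rho_k$ itself, not routed through the overcrowding tail. A secondary issue: for $k\geq 3$ the dichotomy by $\delta=\min_{i\neq j}|y_i-y_j|$ does not control the diameter of $\{y_1,\dots,y_k\}$, so ``all $k$ points lie in a disk of radius $\mathcal{O}(\delta+r)$'' is simply false in general; you would need a genuine hierarchical clustering of the $y_i$'s. The well-separated/iterated-conditioning half is plausible in spirit but, as you acknowledge, hinges on uniform control of partition-function ratios under insertion of logarithmic singularities, which is itself a substantial piece of work (and is essentially what \cite{thoma2022overcrowding} carries out); it should not be treated as a routine reduction.
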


\section{Auxiliary infinite-volume results}
\label{sec:prelimINF}
\subsection{Upgrading the convergence}
\label{sec:PrecisionAS}
For a limiting point process $\PI$ to be a weak limit point of $\PpNx$ means (by definition) that for all $f : \Conf \to \R$ which is:
\begin{enumerate}
	\item Bounded,
	\item Continuous on $\Conf$,
\end{enumerate}
we can pass to the limit along the corresponding sub-sequence and write:
\begin{equation}
\label{ConvWeak}
\E_{\PI}[f] = \lim_{N \to \infty} \E_{\PpNx}[f].
\end{equation}
In this section, we upgrade \eqref{ConvWeak} in two ways:
\begin{enumerate}
	\item We replace the boundedness of $f$ by a  much weaker condition called “exp-tame”, see \eqref{eq:ExpTame}.
	\item We extend the convergence to local (but non necessarily continuous) functions on $\Conf$.
\end{enumerate}

\begin{definition}[Tame and $\exp$-tame functions]
A function $f: \Conf \to \R$ is called \emph{tame} when there exists a disk $\DD(x, \ell)$, and $b > 0$, such that:
\begin{equation*}
|f(\bX)| \leq b(1 + \Points(\bX, \DD(x,\ell))) \text{ for all } \bX \in \Conf.
\end{equation*}
By analogy, we will say that a function $f : \Conf \to \R$ is $\exp$-tame if there exists a disk $\DD(x,\ell)$ such that:
\begin{equation}
\label{eq:ExpTame}
|f(\bX)| \leq \exp\left( \frac{1}{\Ccb} \Points^2(\bX, \DD(x,\ell)) \right),
\end{equation}
where $\Ccb$ is the constant appearing in Lemma \ref{lem:points} (so technically speaking the definition  depends on $\beta$).
\end{definition}

\begin{definition}[Local, exp-local topology]
The \emph{local} topology on $\Pr(\Conf)$ is defined as the coarsest topology such that the maps $\Pp \mapsto \E_{\Pp}[f]$ are continuous for all test functions $f$ that are \emph{local} and \emph{tame}. This is already strictly finer than the weak topology. 

By analogy, we define the \emph{exp-local} topology as the coarsest topology such that the maps $\Pp \mapsto \E_{\Pp}[f]$ are continuous for all test functions $f$ that are \emph{local} and \emph{exp-tame}.
\end{definition}
Proving convergence in the exp-local topology means that we can pass much more test functions to the limit. We proceed in two steps.

\subsubsection*{First improvement: from bounded to exp-tame test functions.}
We start by proving the following control on the number of points in the infinite-volume limit.
\begin{lemma}
\label{lem:ControlPointsInfini}
For any $x \in \R^2$ and $\ell \geq \rho_\beta$, we have:
\begin{equation}
\label{ControlPointsInfini}
\log \E_{\PI}\left[\exp\left( \frac{1}{\Ccb} \Points^2(\bX, \DD(x,\ell)) \right)  \right] \leq \Ccb \ell^4.
\end{equation}
\end{lemma}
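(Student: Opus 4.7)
The result is a direct transfer of the finite-volume bound of Lemma~\ref{lem:points} to the limit point $\PI$. Along the extracted subsequence, $\PpNx \Rightarrow \PI$ in the weak topology associated to the vague topology on $\Conf$, so the natural strategy is to exploit this convergence; the obstacles are that $\bX \mapsto \Points(\bX, \DD(x, \ell))$ is only upper semi-continuous in the vague topology, and that $\exp(\tfrac{1}{\Ccb} \Points^2)$ is unbounded. Both issues can be handled by a routine approximation-plus-truncation argument.

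Fix $\varepsilon > 0$ and pick a continuous, compactly supported $\varphi_\varepsilon : \R^2 \to [0,1]$ with $\varphi_\varepsilon \equiv 1$ on $\DD(x, \ell)$ and $\mathrm{supp}(\varphi_\varepsilon) \subset \DD(x, \ell + \varepsilon)$. Setting $N_\varepsilon(\bX) := \int \varphi_\varepsilon \, \dd \bX$ defines a vaguely continuous functional on $\Conf$ satisfying the pointwise sandwich
\begin{equation*}
\Points(\bX, \DD(x, \ell)) \leq N_\varepsilon(\bX) \leq \Points(\bX, \DD(x, \ell + \varepsilon)).
\end{equation*}
For each $M > 0$ the truncation $G_{M,\varepsilon}(\bX) := \min(M, \exp(\tfrac{1}{\Ccb} N_\varepsilon(\bX)^2))$ is bounded and continuous on $\Conf$, so the weak convergence gives $\E_{\PI}[G_{M,\varepsilon}] = \lim_N \E_{\PpNx}[G_{M,\varepsilon}]$ along the subsequence, and the upper inequality in the sandwich yields
\begin{equation*}
\E_{\PpNx}[G_{M,\varepsilon}] \leq \EN\left[\exp\left(\tfrac{1}{\Ccb} \Points^2(\bXN, \DD(\x + x, \ell + \varepsilon))\right)\right]
\end{equation*}
(using that $\Points(\bXNx, A) = \Points(\bXN, A + \x)$ for any Borel $A$). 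The bulk assumption \eqref{def:bulk} ensures that, for every $N$ large enough, the pair $(\x(N) + x, \ell + \varepsilon)$ satisfies \eqref{condi:LL}, so Lemma~\ref{lem:points} uniformly bounds this expectation by $\exp(\Ccb (\ell + \varepsilon)^4)$.

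Monotone convergence as $M \to \infty$ then gives $\E_{\PI}[\exp(\tfrac{1}{\Ccb} N_\varepsilon^2)] \leq \exp(\Ccb (\ell + \varepsilon)^4)$, and the lower inequality in the sandwich transfers this to $\E_{\PI}[\exp(\tfrac{1}{\Ccb} \Points^2(\cdot, \DD(x, \ell)))] \leq \exp(\Ccb (\ell + \varepsilon)^4)$. Letting $\varepsilon \to 0$ concludes. The only mildly subtle point is the \emph{direction} of the approximation: one must dominate $\Points(\bX, \DD(x, \ell))$ \emph{from above} by the smooth continuous proxy $N_\varepsilon$, so that both the monotone convergence step and the final pointwise comparison point in the right direction for an upper bound; the symmetric choice from below would instead give only a lower bound on the infinite-volume quantity.
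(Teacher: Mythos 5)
Your proof is correct and follows essentially the same route as the paper: sandwich $\Points(\cdot, \DD(x,\ell))$ from above by a vaguely continuous linear statistic, truncate to apply weak convergence, then undo the truncation and pass the finite-$N$ bound of Lemma~\ref{lem:points} to the limit. The only cosmetic difference is that the paper takes the outer disk to have radius $2\ell$ and absorbs the factor $2^4$ into the constant $\Ccb$, whereas you take radius $\ell+\varepsilon$ and let $\varepsilon \to 0$; both are equally valid.
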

\begin{proof}[Proof of Lemma \ref{lem:ControlPointsInfini}]
For $x \in \R^2$ and $\ell > 0$, let $F_{x, \ell}$ be a continuous function on $\Conf$ such that:
\begin{equation*}
\Points(\cdot, \DD(x,\ell)) \leq F_{x, \ell} \leq \Points(\cdot, \DD(x,2\ell)).
\end{equation*}
By a simple truncation argument and weak convergence, we have:
\begin{multline*}
\E_{\PI}\left[\exp\left( \frac{1}{\Ccb} \Points_{x, \ell}(\bX) \right)  \right] \leq \E_{\PI}\left[\exp\left( \frac{1}{\Ccb} F^2_{x, \ell}(\bX) \right)  \right] \\
\leq \liminf_{N \to \infty} \E_{\PpNx} \left[\exp\left( \frac{1}{\Ccb} F^2_{x, \ell}(\bX) \right)  \right]
\leq  \liminf_{N \to \infty} \E_{\PpNx}\left[\exp\left( \frac{1}{\Ccb} \Points^2(\bX, \DD(x,2\ell)) \right)  \right].
\end{multline*}
Note that, by definition, the random variable $\Points(\bX, \DD(x,2\ell))$ where $\bX$ follows $\PpNx$ has the same distribution as $\Points(\bXN, \DD(\x + x,2\ell))$. Since $\x$ is “in the bulk” in the sense of \eqref{def:bulk} and $x, \ell$ are fixed, for $N$ large enough $(\x + x, \ell)$ satisfies \eqref{condi:LL} and we may apply Lemma \ref{lem:points}. Taking the log, we obtain \eqref{ControlPointsInfini}.
\end{proof}

Once we know that \eqref{ControlPointsInfini} holds, we deduce the following from an elementary truncation argument.
\begin{lemma}
\label{lem:ExpTame}
If $f : \Conf \to \R^2$ is continuous and $\exp$-tame, then (along the corresponding subsequence):
\begin{equation}
\label{exptamealong}
\E_{\PI}[f] = \lim_{N \to \infty} \E_{\PpNx}[f].
\end{equation}
\end{lemma}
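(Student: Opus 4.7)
The proof will be a standard truncation argument, reducing the exp-tame case to the bounded continuous case (already covered by the weak convergence).

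The plan is as follows. First I would construct a bounded continuous approximation of $f$. Since the counting function $\Points(\cdot, \DD(x,\ell))$ is not continuous on $\Conf$ for the vague topology, I would use, as in the proof of Lemma~\ref{lem:ControlPointsInfini}, a continuous sandwich function $F_{x,\ell}$ satisfying $\Points(\cdot, \DD(x,\ell)) \leq F_{x,\ell} \leq \Points(\cdot, \DD(x,2\ell))$, together with a continuous cutoff $g_M : \R \to [0,1]$ with $g_M = 1$ on $(-\infty, M]$ and $g_M = 0$ on $[M+1, \infty)$. I then set
\begin{equation*}
f_M := f \cdot (g_M \circ F_{x,\ell}).
\end{equation*}
This function is continuous on $\Conf$, and since $g_M \circ F_{x,\ell}$ vanishes whenever $F_{x,\ell} > M+1$ (in particular, whenever $\Points(\cdot, \DD(x,\ell)) > M+1$), the exp-tame bound \eqref{eq:ExpTame} gives $|f_M| \leq e^{(M+1)^2/\Ccb}$.

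Next, for each fixed $M$, the weak convergence \eqref{ConvWeak} applied to the bounded continuous function $f_M$ yields $\lim_{N \to \infty} \E_{\PpNx}[f_M] = \E_{\PI}[f_M]$ along the extracted subsequence. The conclusion \eqref{exptamealong} will then follow from the triangle inequality
\begin{equation*}
|\E_{\PI}[f] - \E_{\PpNx}[f]| \leq \E_{\PI}[|f - f_M|] + |\E_{\PI}[f_M] - \E_{\PpNx}[f_M]| + \E_{\PpNx}[|f - f_M|],
\end{equation*}
provided I can show that the first and third terms vanish as $M \to \infty$, \emph{uniformly in $N$}.

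For this last step, the pointwise bound
\begin{equation*}
|f - f_M| \leq |f| \cdot \ind_{F_{x,\ell} \geq M} \leq \exp\Bigl(\tfrac{1}{\Ccb} \Points^2(\cdot, \DD(x,\ell))\Bigr) \cdot \ind_{\Points(\cdot, \DD(x,2\ell)) \geq M}
\end{equation*}
reduces the task to uniform tail control of $\exp(\Points^2/\Ccb)$ on the event $\{\Points(\cdot, \DD(x,2\ell)) \geq M\}$. Under $\PpNx$, for $N$ large enough that $(\x + x, 2\ell)$ satisfies \eqref{condi:LL}, one has uniform integrability of the dominating random variable thanks to Lemma~\ref{lem:points}: combining the moment estimate with a Markov bound $\P(\Points \geq M) \leq e^{-M} \E[e^{\Points}] \leq e^{-M + \Ccb \ell^2}$ (which also follows from the second inequality in Lemma~\ref{lem:points}) and Cauchy--Schwarz on a slightly smaller exponent (using that the constant $\Ccb$ in the exponent has room to spare, since Hölder's inequality applied to the bound in Lemma~\ref{lem:points} controls $\E[e^{a \Points^2}]$ for every $a < 1/\Ccb$) shows that $\sup_N \E_{\PpNx}[|f - f_M|] \to 0$ as $M \to \infty$. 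Under $\PI$, the same reasoning applies using Lemma~\ref{lem:ControlPointsInfini} in place of Lemma~\ref{lem:points}.

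The main (mild) obstacle is the uniform integrability step: the exp-tame bound is calibrated to be exactly at the boundary of what Lemma~\ref{lem:points} controls, so a little care is needed to produce the necessary slack in the exponent. Once this is in hand, the rest of the argument is routine.
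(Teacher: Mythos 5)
The paper does not actually write out a proof of Lemma \ref{lem:ExpTame}; it is stated right after Lemma \ref{lem:ControlPointsInfini} with the comment that it follows ``from an elementary truncation argument''. Your reconstruction is exactly the intended one: cut off $f$ by a continuous function of a continuous sandwich $F_{x,\ell}$ of the counting function, apply weak convergence to the bounded continuous truncate, and show that the tails vanish uniformly in $N$.

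The one place your argument is not quite airtight is the uniform-integrability step, and you have in fact misdiagnosed where the slack comes from. You invoke ``Cauchy--Schwarz on a slightly smaller exponent'' and justify it by noting that H\"older controls $\E[e^{a\Points^2}]$ for all $a < 1/\Ccb$. That observation is true but points in the wrong direction: Cauchy--Schwarz applied to $\E[|f-f_M|]$ with $|f| \leq e^{\Points^2/\Ccb}$ produces $\E[e^{2\Points^2/\Ccb}]^{1/2}\,\P(\Points \geq M)^{1/2}$, so one needs control at exponent $2/\Ccb > 1/\Ccb$, which H\"older does \emph{not} give you from Lemma \ref{lem:points}. A uniform bound on $\sup_N \E[e^{\Points_N^2/\Ccb}]$ alone does not imply uniform integrability of that same family, so some genuine slack is required. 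The correct source of slack is the paper's convention that ``constants are chosen as large as needed and may change from line to line'': Lemma \ref{lem:points} in fact holds with $\Ccb$ replaced by any larger constant in the exponent (weakening the bound), so one should read the exp-tame definition as requiring $|f| \leq e^{\Points^2/\Ccb}$ with $\Ccb$ at least twice the constant that the proof of \eqref{AS119} actually yields. With that reading, $\E[|f|^2] \leq \E[e^{2\Points^2/\Ccb}]$ \emph{is} controlled by Lemma \ref{lem:points} (and by \eqref{ControlPointsInfini} in the limit), and your Cauchy--Schwarz step closes. It would be cleaner to state this slack explicitly rather than hoping the H\"older remark covers it. Everything else in your write-up --- the sandwich $F_{x,\ell}$, the cutoff $g_M$, the pointwise bound $|f-f_M| \leq |f|\,\ind_{F_{x,\ell}\geq M} \leq e^{\Points^2(\cdot,\DD(x,\ell))/\Ccb}\,\ind_{\Points(\cdot,\DD(x,2\ell))\geq M}$, and the final triangle inequality --- is correct.
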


\subsubsection*{Second improvement: from continuous to local test functions.}
\begin{lemma}
\label{lem:upgrading}
If $f : \Conf \to \R^2$ is local and $\exp$-tame, then (along the corresponding subsequence):
\begin{equation*}
\E_{\PI}[f] = \lim_{N \to \infty} \E_{\PpNx}[f].
\end{equation*}
\end{lemma}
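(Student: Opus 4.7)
The plan is to reduce to Lemma~\ref{lem:ExpTame} (which handles continuous exp-tame test functions) via two successive approximations: a truncation to pass from exp-tame to bounded local, and an $L^1$-approximation to pass from bounded Borel local to continuous local. For the truncation, set $f_K := f \, \ind_{|f| \leq K}$ and $G := \exp\bigl(\tfrac{1}{\Ccb}\Points^2(\cdot, \DD(x,\ell))\bigr)$; since $|f|$ inherits locality from $f$, so does $f_K$. The exp-tame assumption gives $|f - f_K| \leq G \, \ind_{G > K}$. Choosing the constant $\Ccb$ in the exp-tame definition slightly larger than the one in Lemma~\ref{lem:points} (as permitted by the \emph{as large as needed} convention), Lemmas~\ref{lem:points} and~\ref{lem:ControlPointsInfini} yield $\sup_N \E_{\PpNx}[G^{1+\delta}] + \E_\PI[G^{1+\delta}] < \infty$ for some $\delta > 0$. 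Hence $G$ is uniformly integrable across the family, and $\sup_N \E_{\PpNx}[|f - f_K|] + \E_\PI[|f - f_K|] \to 0$ as $K \to \infty$.

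For the continuous approximation, let $\phi$ be bounded and local, $|\phi| \leq M$ and depending only on $\bX \cap \La$. Decompose $\phi(\bX) = \sum_{n \geq 0} \ind_{\Points(\bX, \La) = n}\, g_n(\bX \cap \La)$ with $g_n$ a bounded symmetric Borel function on $\La^n$, and truncate at $n \leq n_0(\epsilon)$; the tail contribution is controlled uniformly via the exponential moments of $\Points(\bX, \La)$ (Lemmas~\ref{lem:points} and~\ref{lem:ControlPointsInfini}). For each remaining $n$, approximate $g_n$ by a continuous symmetric $g_n^\epsilon$ with $|g_n^\epsilon| \leq M$, close to $g_n$ in $L^1(\La^n, \Leb)$. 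The factorial-moment formula (see \eqref{def:rho_k}), combined with the uniform bound $\rho_{n,\PpNx}, \rho_{n,\PI} \leq \Cc_{n,\beta}$ of Lemma~\ref{lem:bound_CF}, converts $L^1(\Leb)$-closeness of the $g_n$'s into $L^1$-closeness of the corresponding reassembled functions under every $\PpNx$ and under $\PI$, with constants independent of $N$. One finally glues these pieces into a genuine continuous $\phi_\epsilon$ on $\Conf$ using smooth indicators of a slight enlargement of $\La$; the error contributed by points lying near $\partial \La$ is $\leq \Cc_{1,\beta}\, \Leb(\La_\eta)$ uniformly, where $\La_\eta$ is an $\eta$-thickening of $\partial \La$, again by Lemma~\ref{lem:bound_CF}.

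Chaining both approximations, for each $K, \epsilon$ we obtain $\phi_{K,\epsilon}$ continuous local (hence exp-tame, being bounded) with $\sup_N \E_{\PpNx}[|f_K - \phi_{K,\epsilon}|] + \E_\PI[|f_K - \phi_{K,\epsilon}|] < \epsilon$. Lemma~\ref{lem:ExpTame} gives $\lim_N \E_{\PpNx}[\phi_{K,\epsilon}] = \E_\PI[\phi_{K,\epsilon}]$, and a triangle-inequality argument letting first $\epsilon \to 0$, then $K \to \infty$, concludes. The main obstacle is the \emph{uniformity in $N$} of the continuous approximation in Step 2: a bounded Borel local function need not be $\PI$-a.s.\ continuous on $\Conf$, so direct Portmanteau fails, and a single $\phi_\epsilon$ must work for every member of $\{\PpNx\}_N \cup \{\PI\}$ at once. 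The uniform correlation-function bound of Lemma~\ref{lem:bound_CF} is the decisive input, as it lets one dominate all measures in the family by the common Lebesgue reference on $\La^n$.
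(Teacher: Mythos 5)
Your proposal is correct and follows essentially the same route as the paper: truncate in the number of points in $\La$ using the exponential moments (Lemma~\ref{lem:points} and Lemma~\ref{lem:ControlPointsInfini}), approximate each fixed-$n$ sector by a continuous symmetric kernel in $L^1(\La^n,\Leb)$, convert the $L^1$ error into an expectation error uniformly in $N$ via the correlation-function bounds of Lemma~\ref{lem:bound_CF}, and repair the discontinuity of $\ind_{\Points(\cdot,\La)=n}$ by a smooth cut-off near $\partial\La$ whose error is controlled by the one-point correlation bound. The only small redundancy is your initial truncation $f_K := f\,\ind_{|f|\le K}$: once the number of points in $\La$ is restricted to $n\le n_0$, the exp-tame bound already makes $f$ bounded on that sector (as the paper notes, $|\tilde f_k|\le e^{k^2/\Ccb}$), so the $f_K$ step can be dropped.
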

This is a bit trickier because local test functions may have no “regularity” whereas test functions used for the weak convergence are continuous with respect to the vague topology on $\Conf$, so we apply a regularization argument and use the boundedness of correlation functions given by \cite{thoma2022overcrowding} to control the error. We postpone the proof of Lemma \ref{lem:upgrading} to Section \ref{sec:ProofUpgrading}. One simple byproduct of the proof, which we record below, is the observation that boundedness of correlation functions can be passed to the limit:
\begin{lemma}
\label{lemma:boundedCorrelInfini}
For all $k \geq 1$ and for some constants depending on $k$ and $\beta$, we have:
\begin{equation}
\label{bound:CF_infinite_record}
\sup_{y_1, \dots, y_k} \rho_{k, \PI}(y_1, \dots, y_k) \leq \Cc_{k, \beta}.
\end{equation} 
\end{lemma}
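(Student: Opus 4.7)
\textbf{Proof plan for Lemma \ref{lemma:boundedCorrelInfini}.} The strategy is to test the defining duality formula for correlation functions against a generic non-negative continuous compactly supported function, and pass the uniform finite-volume bound of Lemma \ref{lem:bound_CF} to the limit via the upgraded convergence from Lemma \ref{lem:ExpTame}.

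More precisely, fix $k \geq 1$ and let $F : (\R^2)^k \to \R_+$ be continuous and compactly supported, with support contained in $\DD(0, \ell)^k$ for some $\ell \geq \rho_\beta$. Consider the function
\begin{equation*}
g_F(\bX) := \sum_{x_{i_1} \neq \dots \neq x_{i_k} \in \bX} F(x_{i_1}, \dots, x_{i_k}).
\end{equation*}
First I would check that $g_F$ satisfies the hypotheses of Lemma \ref{lem:ExpTame}: continuity with respect to the vague topology on $\Conf$ follows from $F$ being continuous with compact support, and the pointwise bound $0 \leq g_F(\bX) \leq |F|_\0 \cdot \Points(\bX, \DD(0,\ell))^k$ together with the elementary inequality $n^k \leq \Cc_k \exp(n^2/\Ccb)$ shows that $g_F$ is $\exp$-tame (with constant depending on $k$, $\beta$ and $F$).

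Second, I would apply Lemma \ref{lem:ExpTame} to obtain (along the extracted subsequence)
\begin{equation*}
\E_{\PI}[g_F] = \lim_{N \to \infty} \E_{\PpNx}[g_F].
\end{equation*}
By the defining duality \eqref{def:rho_k}, the left-hand side equals $\int_{(\R^2)^k} F \, \rho_{k, \PI}$, while the right-hand side equals $\lim_N \int_{(\R^2)^k} F(y_1, \dots, y_k) \, \rho_{k, \PNbeta}(y_1 + \x, \dots, y_k + \x) \, \dd y_1 \dots \dd y_k$ (the shift by $\x$ comes from the definition \eqref{def:bXNx} of $\bXNx$). The uniform finite-volume bound of Lemma \ref{lem:bound_CF} then yields
\begin{equation*}
\int_{(\R^2)^k} F \, \rho_{k, \PI} \leq \Cc_{k, \beta} \int_{(\R^2)^k} F.
\end{equation*}

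Finally, since this holds for every non-negative continuous compactly supported $F$, we conclude that $\rho_{k, \PI} \leq \Cc_{k, \beta}$ Lebesgue-almost everywhere, which gives \eqref{bound:CF_infinite_record} for the standard (Lebesgue-density) version of the correlation function. The only mild subtlety, which I would highlight as the main point to verify, is the $\exp$-tameness of $g_F$ so that Lemma \ref{lem:ExpTame} actually applies; everything else is a direct chain of identities and inequalities. This matches the author's remark that the statement is a byproduct of the proof of Lemma \ref{lem:upgrading}, since that proof goes through a regularization whose error terms are precisely controlled by finite-volume correlation function bounds.
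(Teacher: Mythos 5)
Your proof is correct and takes essentially the same route as the paper's: test the $k$-th factorial moment against continuous compactly supported functions, pass the finite-volume bound of Lemma \ref{lem:bound_CF} to the limit, and conclude by arbitrariness of the test function. The paper organizes the argument slightly differently, mollifying the indicator of $\DD(y_l, r)$ and sending $r \to 0$ to recover a pointwise (Lebesgue-point) bound on $\rho_{k,\PI}(y_1,\dots,y_k)$, whereas you test against an arbitrary nonnegative $F$ and conclude $\rho_{k,\PI} \le \Cc_{k,\beta}$ almost everywhere; these are equivalent. One small point worth noting: the paper claims the map $\bX \mapsto \sum_{i_1 \neq \dots \neq i_k} \prod_l \chi_r(x_{i_l} - y_l)$ is ``continuous and bounded'' on $\Conf$, but it is only tame (it grows like $\Points^k$); the argument still goes through there because the map is nonnegative and continuous, so weak convergence alone already gives the one-sided inequality $\E_{\PI}[\,\cdot\,] \leq \liminf_N \E_{\PpNx}[\,\cdot\,]$ needed for the upper bound. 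Your explicit verification of $\exp$-tameness and appeal to Lemma \ref{lem:ExpTame} is a clean alternative way to justify that passage. The only microscopic technicality is that your bound $g_F \leq |F|_\0 \Cc_k \exp(\Points^2/\Ccb)$ carries a multiplicative constant not literally allowed by the definition \eqref{eq:ExpTame}, but this is absorbed by rescaling $g_F$ (linearity of expectation) and does not affect anything.
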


\subsubsection*{First consequences}
We have already passed the exponential moments $\eqref{eq:bound_points_finite}$ of the number of points to the limit, see \eqref{ControlPointsInfini}. As a consequence of Lemma \ref{lem:upgrading}, we can also guarantee that the estimates \eqref{eq:bound_fluct_Lipschitz} (exponential moments of fluctuations for Lipschitz functions) and \eqref{eq:bound_Fluct_smooth} (exponential moments of fluctuations for smooth functions) remain valid for $\PI$.
\begin{corollary} Let $x$ be a point in $\R^2$ and $\ell \geq \rho_\beta$ be some lengthscale. 

\textbf{1. Lipschitz test functions}
Let $\Lell$ be the set of all functions that are $\ell^{-1}$-Lipschitz and compactly supported on the disk $\DD(x, 4 \ell)$. We have:
\begin{multline}
\label{eq:bound_Fluct_LIPINF}
\log \E_{\PI} \left[ \sup_{\varphi \in \Lell} e^{\frac{1}{\Cc_\beta} \Fluct^2[\varphi]}  \right] \leq \Ccb \ell^2, \quad \log \E_{\PI} \left[ \sup_{\varphi \in \Lell} e^{\Fluct[\varphi]}  \right] \leq \Ccb \ell, \\ \E_{\PI}\left[ \sup_{\varphi \in \Lell}\left|\Fluct[\varphi]\right| \right] \leq \Ccb \ell.
\end{multline}

\textbf{2. Smooth statistics}
Let $\varphi$ be a test function of class $C^3$, compactly supported in $\DD(x, 4 \ell)$. Assume that $|\varphi|_{\kk} \leq \Cc_\varphi \ell^{-\kk}$ for $\kk = 1,2, 3$ for some constant $\Cc_\varphi$. Then for $|s| \leq \frac{\ell^2}{\Cc_\beta \Cc_\varphi}$ we have
\begin{equation}
\label{eq:bound_Fluct_smoothINF}
\left|\log \E_{\PI} \left[ e^{s \Fluct[\varphi]}  \right]\right| \leq \Cc_\beta \left(|s| \Cc_\varphi + s^4 \Cc_\varphi^4 \right), \quad \E_{\PI}\left[ \left| \Fluct[\varphi] \right| \right] \leq \Cc'_\beta \Cc_\varphi.
\end{equation}
\end{corollary}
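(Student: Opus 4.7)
The plan is to transport the finite-volume bounds of Lemmas \ref{lem:fluct_Lipschitz} and \ref{lem:fluct_smooth} to $\PI$ using the upgraded convergence results of Section \ref{sec:PrecisionAS} (Lemmas \ref{lem:ExpTame} and \ref{lem:upgrading}). The key observation is that $\bXNx$ sampled under $\PpNx$ is the push-forward of $\bXN$ under the translation $x \mapsto x - \x$, so $\Fluct[\varphi](\bXNx) = \Fluct[\varphi(\cdot - \x)](\bXN)$: any test function supported in $\DD(x, 4\ell)$ corresponds under this translation to one supported in $\DD(\x + x, 4\ell)$. Since $\x$ lies in the bulk in the sense of \eqref{def:bulk}, the pair $(\x + x, \ell)$ satisfies \eqref{condi:LL} for all $N$ large enough, so the finite-$N$ estimates apply with constants uniform in $N$.

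For the smooth estimates \eqref{eq:bound_Fluct_smoothINF}, I would apply Lemma \ref{lem:ExpTame} to the functional $\bX \mapsto e^{s \Fluct[\varphi]}$. This map is continuous on $\Conf$ equipped with the vague topology (since $\varphi$ is continuous and compactly supported). Moreover, from $|\varphi|_\0 \leq 8 \Cc_\varphi$ (a consequence of $|\varphi|_\1 \leq \Cc_\varphi \ell^{-1}$ together with compact support in $\DD(x, 4\ell)$), one gets $|\Fluct[\varphi]| \leq 8 \Cc_\varphi (\Points(\bX, \DD(x, 4\ell)) + 16 \pi \ell^2)$, which makes $e^{s \Fluct[\varphi]}$ exp-tame up to a multiplicative constant depending on $s, \Cc_\varphi, \ell$ (after enlarging $\Ccb$ if necessary). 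Lemma \ref{lem:ExpTame} then transports the finite-$N$ bound \eqref{eq:bound_Fluct_smooth} (applied to the translated $\varphi$) to $\PI$, yielding the first inequality of \eqref{eq:bound_Fluct_smoothINF}. The $L^1$-control $\E_{\PI}[|\Fluct[\varphi]|] \leq \Cc'_\beta \Cc_\varphi$ then follows from Jensen's inequality with $s = \pm (\Ccb \Cc_\varphi)^{-1}$, exactly as in the finite-volume argument.

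The Lipschitz case is slightly more delicate because the supremum $F(\bX) := \sup_{\varphi \in \Lell} e^{\Fluct^2[\varphi]/\Ccb}$ is no longer continuous in $\bX$. However, $F$ is local (every $\varphi \in \Lell$ is supported in $\DD(x, 4\ell)$, so $F$ depends only on $\bX_{\DD(x, 4\ell)}$) and measurable (by separability of $\Lell$ in the uniform norm, the supremum may be restricted to a countable dense subfamily). To apply Lemma \ref{lem:upgrading}, I would truncate: $F_M := F \wedge M$ is bounded and local, hence exp-tame (after scaling by $1/M$). Lemma \ref{lem:upgrading} then gives
\begin{equation*}
\E_{\PI}[F_M] = \lim_{N \to \infty} \E_{\PpNx}[F_M] \leq \liminf_{N \to \infty} \E_{\PpNx}[F] \leq e^{\ell^2},
\end{equation*}
the last bound coming from Lemma \ref{lem:fluct_Lipschitz} applied to the translated Lipschitz class $\mathcal{L}_{\x+x, \ell}$. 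Monotone convergence as $M \to \infty$ delivers the first inequality of \eqref{eq:bound_Fluct_LIPINF}; the second is proved by the same scheme with $e^{\Fluct[\varphi]}$ in place of $e^{\Fluct^2[\varphi]/\Ccb}$, and the third ($L^1$-bound) follows from Jensen. The main obstacle is precisely the non-continuity of the supremum in the Lipschitz bounds: it is what makes the weak convergence \eqref{ConvWeak} insufficient and forces the upgrade to the local/exp-local topology of Section \ref{sec:PrecisionAS}. Once that upgrade is in hand, together with the uniform-in-$N$ finite-volume bounds, the argument reduces to a routine truncation.
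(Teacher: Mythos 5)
Your argument is correct and follows the same strategy as the paper: observe that the quantities inside the expectations (after translating back to $\PNbeta$) are local, exp-tame test functions, and invoke the upgraded convergence of Section \ref{sec:PrecisionAS} together with the uniform-in-$N$ finite-volume bounds. The paper's proof is just the one-line observation "the supremum is local and exp-tame"; your version spells out the translation bookkeeping, the measurability of the supremum, and the truncation-plus-monotone-convergence step that makes the exp-tameness literal rather than up to a constant, but the substance is identical.
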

\begin{proof}
Observe that e.g. $\bX \mapsto \sup_{\varphi \in \Lell} e^{\frac{1}{\Cc_\beta} \Fluct^2[\varphi](\bX)}$ is local and exp-tame.
\end{proof}

\section{Number-rigidity: Proof of Theorem \ref{theo:NR}}
\label{sec:numberRigProof}
Number-rigidity of the limit points $\PI$ can be deduced, a posteriori, as a consequence of the fact that those processes satisfy \eqref{DLR} with an interaction potential (here, the logarithmic one) tending to $-\infty$ at infinity, see the corresponding statement \cite[Thm.~3.18]{dereudre2021dlr} (this is how rigidity is proven in \cite{dereudre2021dlr} for Sine-$\beta$ i.e. for the limit point process of the \emph{one}-dimensional log-gas). We give here a more direct proof following the original strategy of \cite{Ghosh_2017} and using controls on fluctuations of smooth linear statistics for the 2DOCP. This does not use the DLR framework.

\begin{proof}[Proof of Theorem \ref{theo:NR}]
Let us prove that the number of points in the unit disk $\DD_1$ is rigid for $\PI$ i.e. coincides $\PI$-a.s. with a measurable function of the configuration in $\R^2 \setminus \DD_1$ (the proof for any other disk would be identical). The strategy of Ghosh-Peres consists in showing that for all $\epsilon > 0$, there exists a measurable function $\Phi_\epsilon : \R^2 \to \R$ such that:
\begin{enumerate}
	\item $\Phi_\epsilon \equiv 1$ on $\DD_1$.
	\item $\Var_{\PI}[\Fluct[\Phi_\epsilon]] \leq \epsilon$.
\end{enumerate}
The elementary argument of \cite[Thm. 6.1]{Ghosh_2017} shows that this is indeed sufficient to prove number rigidity. In order to construct such $\Phi_\epsilon$'s will rely on the fact that fluctuations of smooth linear statistics for the 2DOCP are well-understood (\cite{leble2018fluctuations,MR4063572,serfaty2020gaussian}).

\paragraph{Step 1: An auxiliary test function.}
\newcommand{\phiep}{\varphi_{\epsilon}}
\newcommand{\phiepl}{\varphi_{\epsilon, \ell}}
For $\epsilon \in (0,1)$, let $\phiep : \R^2 \to [0,1]$ be a smooth, radially symmetric, compactly supported function such that:
\begin{equation}
\label{eq:phiep_support}
\phiep(x) = 1 \text{ for $|x| \leq 1$}, \quad \phiep(x) = 0 \text{ for $|x| \geq 2 e^{1/\epsilon}$},
\end{equation}
and obeying the following controls on its derivatives for all $x \in \R^2$
\begin{equation}
\label{eq:phiep_derivat}
|\phiep|_{\kk, \star}(x) \leq \frac{\Cc \epsilon}{|x|^\kk} \text{ for $\kk = 1, 2, 3$}
\end{equation}
the constant $\Cc$ being uniform with respect to $\epsilon$. The construction of such a function $\phiep$ is elementary, see e.g. \cite[Proof of Theorems 1.1 and 1.3.]{Ghosh_2017}. 

For all $\ell > 1$ we let $\phiepl$ be the rescaled function $\phiepl := \phiep(\cdot / \ell)$. Observe that by construction $\phiepl$ is always equal to $1$ on the unit disk $\DD_1$. It thus suffices to show that:
\begin{equation}
\label{VarPI}
\liminf_{\ell \to + \infty} \Var_{\PI}[\Fluct[\phiepl]] \leq \Cc \epsilon,
\end{equation}
and to then take $\Phi_\epsilon$ as $\phiepl$ for $\ell$ large enough. This boils down to controlling the variance for fluctuations of the smooth linear statistics associated to $\phiepl$.

The test functions $\phiep, \phiepl$ are multi-scale by design and for those functions the existing results (even the most up-to-date ones of \cite{serfaty2020gaussian}) are not stated in an optimal way\footnote{In short: the error term is evaluated in terms of “global” Sobolev norms $W^{1, \infty}$ and $W^{2, \infty}$, which requires a control of derivatives in $L^\infty$, instead of something more local like $W^{1,2}, W^{2, 1}$.}. One would need to inspect the (difficult and lengthy) proof of \cite{serfaty2020gaussian} in order to state a sharp control on their fluctuations. We can however make things work here with minimal effort because the Ghosh-Peres argument is so flexible. 
\newcommand{\xiel}{\xi_{N, \epsilon, \ell}}
We set $\xiel := \phiepl(\cdot - \x(N))$, so that for all $N \geq 1$
\begin{equation*}
\Var_{\PpNx}[\Fluct[\phiepl](\bX)] = \Var[\Fluct[\xiel](\bXN)].
\end{equation*}

\paragraph{Step 2: CLT results and variance bounds.}
The analysis of \cite[Thm. 3  \& Cor. 2.3]{serfaty2020gaussian}, which generalizes the “mesoscopic” case of \cite[Thm.1]{leble2018fluctuations}, imply that the law of $\Fluct[\xiel](\bXN)$ converges \emph{without normalization} to a Gaussian distribution in the following sense:
\begin{equation*}
\lim_{\ell \to \infty} \limsup_{N \to \infty} \left| \log \EN\left[ \exp\left(\tau \Fluct[\xiel]\right) \right] - \frac{\tau^2}{4 \pi \beta} \int_{\R^2} |\nabla \phiep|^2 \right| = 0,
\end{equation*}
uniformly for $\tau$ in compact subsets of $(-\infty, + \infty)$. In particular, we have:
\begin{equation*}
\lim_{\ell \to \infty} \limsup_{N \to \infty} \left| \Var_{\PNbeta} \left[\Fluct[\xiel] \right] - \frac{1}{4 \pi \beta} \int_{\R^2} |\nabla \phiep|^2 \right| = 0.
\end{equation*}
Observe that $\int_{\R^2} |\nabla \phiep|^2 = \O(\epsilon)$. We deduce that for $\ell$ large enough, as $N \to \infty$ (depending on $\epsilon$): 
\begin{equation*}
\Var_{\PNbeta} \left[\Fluct[\phiepl] \right] \leq \Cc_\beta \epsilon,
\end{equation*} 
which can be passed to the exp-tame limit (as $N \to \infty$) and yields \eqref{VarPI}.

This concludes the proof of Theorem \ref{theo:NR}.
\end{proof}

\section{Making sense of DLR equations: proof of Theorem \ref{theo:zero}}
\label{sec:ProofMaking}
\newcommand{\tMLa}{\widetilde{\mathsf{M}}_\La}
\newcommand{\MLaN}{\mathsf{M}_{\La, N}}
\newcommand{\MLaNp}{\mathsf{M}^{p}_{\La, N}}
\newcommand{\tMLaN}{\widetilde{\mathsf{M}}_{\La, N}}
\newcommand{\tMLaNp}{\widetilde{\mathsf{M}}^{p}_{\La, N}}
\newcommand{\MquL}{\mathsf{M}_\La^{?}}
\newcommand{\bXabs}{\bX^{\mathrm{abs}}}
\subsection{Heuristics.}
Let $\La \subset \R^2$ a compact subset. Giving sense to DLR equations implies defining the interaction between a configuration of points in $\La$ and one in $\bLa$. A naive way would be to set:
\begin{equation}
\label{HeurMove}
\MquL(\bX', \bX) := \iint_{\Lambda \times \bLa}  - \log|x - y|  \dd(\bX' -  \Leb)(x) \dd(\bX - \Leb)(y). 
\end{equation}
Let us explain why this might not make sense. First, we can write this hypothetical quantity as:
\begin{equation}
\label{HeurMove2}
\MquL(\bX', \bX) = \sum_{x \in \bX'} \Fluct\left[- \log|x-\cdot] \1_{\bLa} \right](\bX) - \int_{x \in \La} \Fluct\left[- \log|x-\cdot] \1_{\bLa} \right](\bX ) \dd \Leb(x).
\end{equation}
Now, fix a point $x$ in $\La$ and consider $\Fluct\left[- \log|x-\cdot| \ind_{\bLa} \right](\bX)$, which represents the interaction of $x$ with the configuration outside $\La$ and the infinitely extended background given by the Lebesgue measure on $\bLa$. Disregarding the indicator function, $\log|x-\cdot|$ is fairly smooth, and we know (by \cite{leble2018fluctuations,MR4063572,serfaty2020gaussian}) that the typical size of the linear statistics for a smooth test function $\varphi$ is given (in terms of standard deviation) by $\left(\int_{\R^2} |\nabla \varphi|^2\right)^\hal$. Here, this would not quite give a finite result because $\nabla \log|x-\cdot|$ (barely) fails to be in $L^2$ near infinity. 

In order to make a gain (e.g. some additional decay), we would like to use the cancellation between $\bX'$ and the Lebesgue measure on $\La$ in \eqref{HeurMove}, \eqref{HeurMove2}. However $\Points(\bX', \La)$ and $\Leb(\La)$ might not be equal as there is no reason for having a perfect charge balance within $\La$. 

To overcome this difficulty, we consider a slightly different quantity and introduce some “abstract” \emph{reference point configuration $\bXabs$ in $\La$ that has the same number of points as $\bX'$}. The actual choice does not matter, so we simply take
\begin{equation*}
\bXabs := \Points(\bX', \La) \times \delta_0,
\end{equation*}
i.e. we place the correct number of points, all of them located at the origin, and we then define (cf. \eqref{HeurMove2}):
\begin{equation}
\label{HeurMove3}
\tMLa(\bX', \bX) = \sum_{x \in \bX'} \Fluct\left[- \log|x-\cdot] \1_{\bLa} \right](\bX) - \sum_{x \in \bXabs} \Fluct\left[- \log|x-\cdot| \1_{\bLa} \right](\bX).
\end{equation}
This in turn can be rewritten as:
\begin{multline}
\label{def:tMLa}
\tMLa(\bX', \bX) = \sum_{x \in \bX'} \Fluct \left[- \log|x-\cdot| \1_{\bLa} \right](\bX) - \Points(\bX', \La) \Fluct\left[- \log|\cdot| \1_{\bLa} \right](\bX) \\
= \iint_{\Lambda \times \bLa}  \left(- \log|x - y| + \log|y| \right)  \dd \bX'(x) \dd\left(\bX - \Leb\right)(y),
\end{multline}
and we will mostly work with that last expression. The connection with our naive guess \eqref{HeurMove} is that:
\begin{equation*}
\tMLa(\bX', \bX) = \MquL(\bX', \bX) + \left(\int_{x \in \La} \Fluct\left[- \log|x-\cdot] \1_{\bLa} \right](\bX ) \dd \Leb(x) - \sum_{x \in \bXabs} \Fluct\left[- \log|x-\cdot| \1_{\bLa} \right](\bX)\right),
\end{equation*}
the parenthesis in the right-hand side being a constant with respect to the position of the points in $\bX'$ (but it depends on the number of points of $\bX'$ in $\bLa$, which is why we eventually obtain \emph{canonical} DLR equations). Recall that in statistical physics, shifting energies by a constant does not affect the probability distribution, as this constant gets absorbed in the partition function. One may think of the manipulations presented above as “shifting energies by a global, possibly infinite, constant”. 

Note that $\tMLa$ still needs to be defined in a proper way because $\bLa$ is unbounded (see \eqref{def:tMLa}). Once we know that $\tMLa$ exists, and provided $\bX'$ and $\bX$ have the same number of points in $\La$, we define (cf. \eqref{eq:MLa}):
\begin{equation}
\label{reDefMLa}
\MLa(\bX', \bX) = \tMLa(\bX', \bX) - \tMLa(\bX, \bX) = \iint_{\La \times \bLa} -\log|x-y| \dd  \left(\bX' - \bX\right)(x) \dd \left(\bX - \Leb \right)(y).
\end{equation}
which is the cost of “moving points from $\bX$ to $\bX'$ within $\La$”. The quantity we will eventually use is $\MLa$, but it is convenient to first prove results on $\tMLa$ because of its simple expression \eqref{def:tMLa}. 

\subsection{Partial move functions.}
Recall that $\chi = (\chi_i)_{i \geq 1}$ is the dyadic partition of unity introduced in Section \ref{sec:notation}. For $p \geq 1$, define the “partial move function” $\tMLa^p$ as (cf. \eqref{def:tMLa})
\begin{equation}
\label{def:tMLap}
\tMLa^p(\bX', \bX) := \iint_{\Lambda \times \bLa}  \left(- \log|x - y| + \log|y| \right)  \dd \bX'(x) \sum_{i=0}^p \chi_i(y)  \dd\left(\bX - \Leb\right)(y),
\end{equation}
and define similarly $\MLa^p$ as (cf. \eqref{reDefMLa}):
\begin{equation}
\label{def:MLap}
\MLa^p(\bX', \bX) := \iint_{\La \times \bLa} -\log|x-y| \dd  \left(\bX' - \bX \right)(x) \sum_{i=0}^p \chi_i(y) \dd \left(\bX - \Leb \right)(y).
\end{equation}
We will also use the “finite-$N$” versions, with the exterior restricted to $\LN$, namely:
\begin{equation}
\tMLaNp(\bX', \bX) := \iint_{\Lambda \times \bLa}  \left(- \log|x - y| + \log|y| \right) \1_{\LN}(y)  \dd \bX'(x) \sum_{i=0}^p \chi_i(y)  \dd\left(\bX - \Leb\right)(y),
\end{equation}
ad well as:
\begin{equation}
\label{def:MLaNp}
\MLaNp(\bX', \bX) := \iint_{\La \times \bLa} -\log|x-y| \1_{\LN}(y) \dd  \left(\bX' - \bX \right)(x) \sum_{i=0}^p \chi_i(y) \dd \left(\bX - \Leb \right)(y).
\end{equation}

\begin{remark}
\label{rem:Nplarge}
Note that for $p$ fixed, if $N \geq 2^{p+2}$, then $\MLaNp$ and $\MLa^p$ coincide, as well as $\tMLaNp$ and $\tMLa^p$.
\end{remark}

\newcommand{\phiix}{\varphi_{i,x}}
\newcommand{\Rem}{\mathrm{Rem}}

\subsection{Almost sure existence of total move functions, and control on remainders.}
\begin{proposition}
\label{prop:Zero}
For all bounded Borel set $\La \subset \R^2$, the following holds.

\textbf{A. Infinite volume.}
For $\PI$-almost every $\bX$:
\begin{enumerate}
    \item For all point configuration $\bX'$ in $\La$, the limit $\lim_{p \to \infty} \tMLa^p(\bX', \bX)$ of \eqref{def:tMLap} exists. We denote it by $\tMLa$.
    \item For all $n \geq 0$, we control the truncation error among point configurations with $n$ points in $\La$:
    \begin{equation}
    \label{uniformtMLa}
    \lim_{p \to \infty} \E_{\PI}\left[\sup_{\bX' \in \Conf(\La, n)} |\tMLa(\bX', \bX) - \tMLa^p(\bX', \bX)| \right] = 0
    \end{equation}
\end{enumerate}

\textbf{B. Finite volume.}
For all $n \geq 0$, we control the truncation error in finite volume among point configurations with $n$ points in $\La$:
\begin{equation}
\label{uniformtMLa_N}
\lim_{p \to \infty} \lim_{N \to \infty}  \E_{\PpNx} \left[\sup_{\bX' \in \Conf(\La, n)} |\tMLaN(\bX', \bXN) - \tMLaNp(\bX', \bXN)| \right] = 0.
\end{equation}
\end{proposition}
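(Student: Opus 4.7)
The natural strategy is to write the remainder as a telescoping tail
\[
\tMLa^q(\bX',\bX) - \tMLa^p(\bX',\bX) = \sum_{i=p+1}^{q} I_i(\bX',\bX), \quad I_i(\bX',\bX) := \iint_{\La \times \bLa}\bigl(-\log|x-y| + \log|y|\bigr)\chi_i(y)\,\dd\bX'(x)\,\dd(\bX - \Leb)(y),
\]
and prove the strengthened assertion that $\sum_i \E_{\PI}[\sup_{\bX' \in \Conf(\La,n)}|I_i(\bX',\bX)|] < +\infty$ for each $n$, which implies both claims of part (A) by Fubini, Borel-Cantelli, and dominated convergence (the null set in item 1 is the countable union over $n$). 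Setting $\varphi_{i,x}(y) := (-\log|x-y| + \log|y|)\chi_i(y)\1_{\bLa}(y)$, the triangle inequality gives $|I_i(\bX',\bX)| \leq n\sup_{x \in \La}|\Fluct[\varphi_{i,x}](\bX)|$, so matters reduce to controlling the supremum of this one-parameter family of linear statistics.

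The key analytic input is a Taylor-type cancellation: assuming $\La \subset \DD(R)$, on $\mathrm{supp}\,\chi_i$ one has $-\log|x-y| + \log|y| = -\log(1 - x/y)$, which for $i$ large (i.e. $2^{i-1} > 2R$) is of size $O(R/2^i)$, with each additional $y$-derivative gaining a factor $2^{-i}$. A short computation yields $|\varphi_{i,x}|_\kk = O\bigl(R \cdot 2^{-(\kk+1)i}\bigr)$ for $\kk \in \{0,1,2,3\}$, which matches the hypotheses of Lemma \ref{lem:fluct_smooth} at scale $\ell = 2^i$ with $\Cc_\varphi = \Cc R/2^i$. The infinite-volume version \eqref{eq:bound_Fluct_smoothINF} therefore yields, for each fixed $x$ and any $p \geq 1$, that $\E_{\PI}[|\Fluct[\varphi_{i,x}]|^p]^{1/p} \leq \Cc_{p,\beta}\,R/2^i$. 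An identical analysis applied to $\partial_x \varphi_{i,x}$ gives $|\partial_x \varphi_{i,x}|_\kk = O(2^{-(\kk+1)i})$ and hence $\E_{\PI}[|\Fluct[\partial_x \varphi_{i,x}]|^p]^{1/p} \leq \Cc_{p,\beta}/2^i$. To promote these fixed-$x$ bounds to a supremum over $x \in \La$, observe that for $i$ large, $\mathrm{supp}\,\chi_i$ is disjoint from $\La$, so the map $x \mapsto F_i(x;\bX) := \Fluct[\varphi_{i,x}](\bX)$ is smooth with $\nabla_x F_i(x;\bX) = \Fluct[\partial_x \varphi_{i,x}](\bX)$. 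Applying the two-dimensional Sobolev embedding $W^{1,p}(\La) \hookrightarrow C^0(\bar{\La})$ for $p > 2$ pathwise, then taking $p$-th powers, expectation, and Fubini, gives $\E_{\PI}[\sup_{x \in \La}|F_i(x;\bX)|] \leq \Cc_{\beta,\La,R}/2^i$; summing over $i > p$ yields the bound $\Cc\,n/2^p \to 0$, proving \eqref{uniformtMLa}. The finitely many small-$i$ terms with $2^i \lesssim \max(R,\rho_\beta)$ are handled separately and contribute a harmless finite quantity.

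Part (B) proceeds along identical lines, invoking the finite-volume Lemma \ref{lem:fluct_smooth} in place of its infinite-volume counterpart. The role of Remark \ref{rem:Nplarge} is to ensure that, for each fixed $p$ and $N \geq 2^{p+2}$, all dyadic annuli indexed by $p < i \ll \log_2 \sqrt{N}$ have $\mathrm{supp}\,\chi_i$ strictly interior to $\LN - \x$, so the $\1_{\LN}$ factor in \eqref{def:MLaNp} is inert and the argument above transfers verbatim; the finitely many boundary-intersecting scales disappear in the order $\lim_p\lim_N$ because $N$ is sent to infinity first. The main obstacle, shared between both parts, is the Sobolev-embedding step that promotes the function-wise Lemma \ref{lem:fluct_smooth} to a supremum over $x \in \La$: extracting high $L^p$-moments from the exponential-moment formulation uniformly in the dyadic scale $i$ is the most delicate piece of the argument, but once this is done, the geometric decay in $2^{-i}$ makes everything summable.
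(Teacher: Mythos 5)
Your Part A takes a genuinely different route from the paper's. The paper Taylor-expands $\varphi_{i,x}(y) = \langle x, J_i(y)\rangle + \Rem_i(x,y)$ around $x=0$: the first-order term $J_i(y)=\frac{y}{|y|^2}\chi_i(y)$ is $x$-\emph{independent}, so the pointwise smooth bound of Lemma~\ref{lem:fluct_smooth} suffices; the remainder $\Rem_i(x,\cdot)$ has the small prefactor $2^{-2i}$ and is fed into the already-\emph{uniform} Lipschitz bound of Lemma~\ref{lem:fluct_Lipschitz}. That decomposition is how the paper avoids promoting a function-wise bound to a supremum over $x$. You instead keep $\varphi_{i,x}$ intact, apply the smooth CLT bound pointwise, and bootstrap to a supremum via $W^{1,p}\hookrightarrow C^0$ plus moment interpolation from the exponential bound. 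Your route is plausible and would be worth writing out carefully (in particular, checking that the $s^4C_\varphi^4$ term in \eqref{eq:bound_Fluct_smoothINF} does not spoil the $L^p$-moment extraction for the $s$-values you need, uniformly in $i$), but it is more machinery than the paper's two-term split, which never needs Sobolev at all.

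Part B, however, has a genuine gap. Your claim that ``the finitely many boundary-intersecting scales disappear in the order $\lim_p\lim_N$'' is incorrect: for every fixed $p$, the tail $\sum_{i>p}$ \emph{always} contains the layer at $i\approx\frac{1}{2}\log_2 N$ whose dyadic annulus intersects $\partial\LN$, and this layer does not vanish as $N\to\infty$ — it merely shifts to larger $i$. It must be estimated, and your framework cannot do it: the indicator $\1_{\LN}$ destroys the $C^3$-regularity of $\varphi_{i,x}\1_{\LN}$, so Lemma~\ref{lem:fluct_smooth} is simply inapplicable there (condition~\eqref{condi:LL} also fails). The paper's $J_i+\Rem_i$ split is precisely tailored to this: $J_i$ is \emph{radially symmetric}, which lets Lemma~\ref{lem:SmoothHardWall} give $\EN[|\Fluct[J_i\1_{\LN}]|]=o(\sqrt N)\cdot 2^{-i}=o(1)$, and $\Rem_i$ is Lipschitz, so Lemma~\ref{lem:LipschitzHardWall} gives $\O(N^{-1/2})$ uniformly in $x$. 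Together these show the boundary layer contributes $o_N(1)$, which is swallowed by the inner $N\to\infty$ limit before the outer $p\to\infty$. Treating the whole of $\varphi_{i,x}\1_{\LN}$ as one smooth function, as you do, has no way to access these hard-wall estimates, and without them \eqref{uniformtMLa_N} is not established.
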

Item A.1. ensures the almost sure existence of $\tMLa$, which will eventually allow us to \emph{make sense of DLR equations} (see Section \ref{sec:proof0}). Items A.2 and B. provide uniform controls on the remainder $\tMLa - \tMLa^p$ and the corresponding finite-$N$ quantity, which will be crucial when \emph{proving that DLR equations hold}, by allowing to localize the move functions. Indeed, $\tMLa^p(\bX', \cdot)$ is a local function on $\Conf$, whereas $\tMLa(\bX', \cdot)$ is not.

\begin{proof}[Proof of Proposition \ref{prop:Zero}]
Recall (see Section \ref{sec:notation}) that $\chi = (\chi_i)_{i\geq 0}$ is a dyadic partition of unity such that each function $\chi_i$ is of class $C^3$, compactly supported in the annulus $\DD(2^{i+1}) \setminus \DD(2^{i-1})$ for $i \geq 1$, and satisfies $|\chi_i|_{\kk} \leq \Cc_\chi 2^{-i \kk}$ for $\kk = 1,2,3$ with some constant $\Cc_\chi$ uniform in $i$.  

\paragraph{Re-writing each layer.}
For $x \in \La$ and $i \geq 0$, let $\phiix$ be the following function:
\begin{equation}
\label{defphiix}
\phiix : y \mapsto \left( -\log |x-y| + \log |y| \right) \chi_i(y).
\end{equation}
For all $p \geq 1$ and all $\bX, \bX'$ we can write the partial move function $\tMLa^p$ (see \eqref{def:tMLap}) as:
\begin{equation}
\label{rewritetMLap}
\tMLa^p(\bX', \bX) := \iint_{\La \times \R^2} \sum_{i=0}^p \phiix(y) \1_{\bLa}(y) \dd \bX'(x)  \dd \left(\bX - \Leb \right)(y)
= \sum_{i=0}^p \sum_{x \in \bX'} \Fluct[\phiix \1_{\bLa}](\bX),
\end{equation} 
as well as its finite-$N$ counterpart (note the extra constraint due to $\1_{\LN}$):
\begin{equation}
\label{rewritetMLaNp}
\tMLaNp(\bX', \bX) = \sum_{i=0}^p  \sum_{x \in \bX'} \Fluct[\phiix \1_{\bLa} \1_{\LN}](\bX).
\end{equation}

Let $I \geq 1$ be an index such that $\La \subset \DD(2^{I-4})$ (it exists because $\La$ is bounded), the point being that if $i \geq I$, then the support of $\chi_i$ is far (at least one dyadic scale away) from $\La$, and in particular we have $\phiix \ind_{\bLa} = \phiix$, so we may forget the indicator function.

For $x \in \La$, $y$ fixed and $i \geq I$, we apply a Taylor's expansion to $-\log|y-\cdot|$ around $0$ and we decompose $\phiix(y)$ (see \eqref{defphiix}) as: 
\begin{equation}
\label{phiiexpan}
\phiix(y) =  \left\langle x, \frac{y}{|y|^2} \chi_i(y) \right\rangle  + \Rem_i(x,y).
\end{equation}
 The first-order term in \eqref{phiiexpan} can be written as $\langle x, J_i(y) \rangle$ with $J_i := y \mapsto \frac{y}{|y|^2} \chi_i(y)$, which is a smooth function that satisfies:
\begin{equation}
\label{Ji}
|J_i|_\kk \leq \Cc_\chi 2^{-i (\kk+1)} \ \kk = 0, 1, 2, 3. 
\end{equation}
On other hand, the “second-order” remainder term $\Rem_i(x,\cdot)$ satisfies:
\begin{equation}
\label{Remi}
|\Rem_i(x,\cdot)|_{\kk} \leq \Cc_\La 2^{-i (\kk + 2)} \text{ and in particular } |\Rem_i(x,\cdot)|_{\1} \leq \Cc_\La 2^{-3i}.
\end{equation} 
For $i \geq I$, we are left with:
\begin{equation}
\label{RewriteFluctphiix}
\Fluct[\phiix \1_{\bLa}] = \left\langle x, \Fluct[J_i] \right\rangle + \Fluct[\Rem_i(x,\cdot)],
\end{equation}
where:
\begin{itemize}
    \item $J_i$ is a radially symmetric, smooth, compactly supported function (on a disk of radius $2^{i+2}$) that does not depend on $x$ and satisfies \eqref{Ji}.
    \item $\Rem_i(x,\cdot)$ is a Lipschitz, compactly supported function (on a disk of radius $2^{i+2}$), that does depend on $x$ but satisfies \eqref{Remi} uniformly with respect to $x$ in $\La$.
\end{itemize}

\paragraph{Infinite-volume case: existence of move functions.}
Let us first prove the almost sure existence of total move functions. We have, in view of \eqref{rewritetMLap} and introducing the index $I$ chosen above:
\begin{equation}
\label{I2nd}
\tMLa^p(\bX', \bX) = \sum_{i<I} \sum_{x \in \bX'} \Fluct[\phiix \1_{\bLa}](\bX) + \sum_{i=I}^p \sum_{x \in \bX'} \Fluct[\phiix \1_{\bLa}](\bX).
\end{equation} 
To prove item A.1. it is enough to prove the existence of a limit as $p \to \infty$ of the second sum, \emph{for almost every $\bX$ and for all $\bX'$}. Indeed, the boundedness of the $1$-point correlation function implies that for $\PI$-a.e. point configuration $\bX$ in $\bLa$ one has $\inf_{x \in \bX} \dist(x, \La) > 0$, and thus, since $\varphi_{i,x}$ is bounded on compact subsets of $\bLa$):
\begin{equation}
\label{MoinsI}
\left|\sum_{i < I} \sum_{x \in \bX'} \Fluct[\phiix \1_{\bLa}](\bX) \right| \leq \Points(\bX', \La) \times \Cc(\bX, \La, I) \text{ is finite for all $\bX'$ in $\bLa$.}
\end{equation}
 We now focus on the second sum in \eqref{I2nd}. Using \eqref{RewriteFluctphiix} we write:
\begin{equation}
\label{sumiP}
\sum_{i=I}^p \sum_{x \in \bX'} \Fluct[\phiix \1_{\bLa}](\bX) =
\sum_{i=I}^p \sum_{x \in \bX'} \left\langle x, \Fluct[J_i](\bX) \right\rangle +  \sum_{i=I}^p \sum_{x \in \bX'} \Fluct[\Rem_i(x,\cdot)(\bX)]
\end{equation}
The first term in the right-hand side of \eqref{sumiP} can be trivially bounded by
\begin{equation}
\label{JiA}
\sum_{i=I}^p \sum_{x \in \bX'} \left\langle x, \Fluct[J_i](\bX) \right\rangle \leq \Points(\bX', \La) \times \Cc_\La \times \sum_{i=I}^p \left| \Fluct\left[J_i\right](\bX)\right|,
\end{equation}
where the last term does not depend on $\bX'$. Applying the control \eqref{eq:bound_Fluct_smoothINF} on fluctuations of smooth statistics to $J_i$ that satisfies \eqref{Ji}, we get for $i \geq I$:
\begin{equation}
\label{cfFluctJi}
\E_{\PI} \left[ \left|\Fluct[J_i](\bX)\right|\right] \leq \Cc_{\beta, \chi} 2^{-i},
\end{equation} 
and we thus obtain that:
\begin{equation*}
\E_{\PI} \left[ \sum_{i=I}^{+\infty} \left| \Fluct\left[J_i\right](\bX)\right| \right] < + \infty,
\end{equation*}
which ensures in view of \eqref{JiA} that, for $\PI$-a.e. $\bX$ and for all $\bX'$, the quantity
\begin{equation}
\label{MainOrder}
\lim_{p \to \infty} \sum_{i=I}^p \sum_{x \in \bX'} \left\langle x, \Fluct[J_i](\bX) \right\rangle  \text{ exists and is finite.}
\end{equation}

On the other hand, in view of our control \eqref{Remi} on $\Rem_i$, the second term in the right-hand side of \eqref{sumiP} can be bounded by
\begin{equation}
\label{RemiA}
\left|\sum_{i=I}^p \sum_{x \in \bX'} \Fluct[\Rem_i(x,\cdot)(\bX)] \right| \leq \Points(\bX', \La) \times \Cc_{\chi, \La} \times \sum_{i=I}^p 2^{-2i} \sup_{f_i \in \mathcal{L}_i} |\Fluct[f_i](\bX)|,
\end{equation}
where $\mathcal{L}_i$ is the set of all Lipschitz functions that are $2^{-i}$-Lipschitz and compactly supported in $\DD(2^{i+2})$. Using the uniform control \eqref{eq:bound_Fluct_LIPINF} on fluctuations of Lipschitz statistics, we get for $i \geq 1$:
\begin{equation}
\label{cfFluctfi}
\E_{\PI} \left[ \sup_{f_i \in \mathcal{L}_i} |\Fluct[f_i](\bX)| \right] < \Cc_{\beta, \chi} 2^{i}, \text{ and thus } \E_{\PI} \left[ \sum_{i=I}^p 2^{-2i} \sup_{f_i \in \mathcal{L}_i} |\Fluct[f_i](\bX)|  \right] \leq \Cc_{\beta, \chi} 2^{-I} < + \infty. 
\end{equation}
This ensures that for $\PI$-a.e. $\bX$ and for all $\bX'$, the quantity
\begin{equation}
\label{SecondOrder}
\lim_{p \to \infty} \sum_{i=I}^p \sum_{x \in \bX'} \Fluct[\Rem_i(x,\cdot)(\bX)]  \text{ exists and is finite.}
\end{equation}
Combining \eqref{MoinsI}, \eqref{MainOrder} and \eqref{SecondOrder} proves item A.1.

\begin{remark}
\label{rem:NeededForMove}
The proof of item A.1 only required three ingredients:
\begin{enumerate}
    \item Boundedness of the $1$-point correlation function as in \eqref{bound:CF_infinite_record} (used for \eqref{MoinsI}).
    \item Uniform control on Lipschitz statistics as in \eqref{eq:bound_Fluct_LIPINF} (used for \eqref{SecondOrder}).
    \item Control on smooth statistics as in \eqref{eq:bound_Fluct_smoothINF} (used for \eqref{MainOrder}).
\end{enumerate}
It thus readily applies to all point processes satisfying those three conditions (see Section \ref{sec:good} below).
\end{remark}

\paragraph{Infinite-volume case: control on remainders.} The previous computation (combining \eqref{sumiP}, \eqref{JiA}, \eqref{RemiA}) has also shown that, for $p \geq I$:
\begin{equation*}
\left|\tMLa(\bX', \bX) - \tMLa^p(\bX', \bX) \right| \leq \Points(\bX', \La) \times \Cc_{\La, \chi} \times \left( \sum_{i \geq p} \left|\Fluct\left[J_i\right](\bX)\right|  + 2^{-2i} \sup_{f_i \in \mathcal{L}_i} |\Fluct[f_i](\bX)| \right),
\end{equation*}
and that (the number $\Points(\bX', \La)$ being fixed but arbitrary), the right-hand side goes to $0$ in expectation as $p \to \infty$, uniformly with respect to $\bX'$. This proves \eqref{uniformtMLa}.

\paragraph{Finite-volume case: control on remainders.} Existence of move functions poses no problem in finite volume. However, when controlling the truncation error $\tMLaN - \tMLaNp$, there is a subtle difference due to the presence of the additional indicator $\1_{\LN}$ within the test function (see \eqref{rewritetMLaNp}). This only affects the “last” layer, for which $\supp \chi_i$ contains $\partial \LN$, which corresponds to 
\begin{equation*}
i = \log \dist(\x(N), \partial \LN) + \O(1) \geq \hal \log N + \O(1).
\end{equation*}
For this index, we rely on the “hard edge versions stated in Lemma \ref{lem:LipschitzHardWall} and Lemma \ref{lem:SmoothHardWall}, which (together with the controls \eqref{Remi} on $\Rem_i$ and \eqref{Ji} on $J_i$) ensure that:
\begin{equation*}
\EN\left[ \sup_{x \in \La} \left| \Fluct\left[\Rem_i(x,\cdot) \1_{\LN}\right](\bX)\right| \right] \leq \Cc_{\beta, \La} N^{-\hal}, \quad \EN \left[ \left|\Fluct\left[J_i \1_{\LN} \right](\bX)\right| \right] = o(1),
\end{equation*}
with error terms depending on $\beta, \La$ and the choice of the sequence $\x = \x(N)$. This last estimate is not as good as one could hope for, but it is enough to obtain \eqref{uniformtMLa_N}.
\end{proof}

\subsection{Consequence for the “true” move function \texorpdfstring{$\MLa$}{}: proof of Theorem \ref{theo:zero}}
\label{sec:proof0}
Recall that for $\bX$ in $\Conf(\R^2)$ and $\bX'$ in $\Conf(\La, \Points(\bX, \La))$ we define $\MLa(\bX', \bX)$ as 
(see \eqref{def:MLap}):
\begin{equation*}
\MLa(\bX', \bX) = \tMLa(\bX', \bX) - \tMLa(\bX, \bX) = \lim_{p \to \infty} \MLa^p(\bX', \bX).
\end{equation*}
As an easy consequence of Proposition \ref{prop:Zero}, we obtain for all bounded Borel set $\La \subset \R^2$:
\begin{corollary}
For $\PI$-almost every $\bX$ and for all $\bX'$ in $\Conf(\La, \Points(\bX, \La))$, the limit 
\begin{equation*}
\MLa(\bX', \bX) := \lim_{p \to \infty} \MLa^p(\bX', \bX)
\end{equation*} 
exists and is finite, which proves Theorem \ref{theo:zero}.
\end{corollary}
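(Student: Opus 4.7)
The plan is to derive the Corollary directly from Proposition~\ref{prop:Zero}.A.1 by exploiting the fact that, under the assumption $\Points(\bX', \La) = \Points(\bX, \La)$, the signed measure $\bX' - \bX_\La$ has zero total mass on $\La$, which causes the $\log|y|$ correction built into $\tMLa^p$ to cancel out when one forms an appropriate difference.

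First I would verify the algebraic identity
\begin{equation*}
\MLa^p(\bX', \bX) = \tMLa^p(\bX', \bX) - \tMLa^p(\bX_\La, \bX)
\end{equation*}
for every $p \geq 0$ and every $\bX' \in \Conf(\La, \Points(\bX, \La))$. Indeed, comparing \eqref{def:tMLap} and \eqref{def:MLap}, the right-hand side equals
\begin{equation*}
\iint_{\La \times \bLa} \bigl(-\log|x-y| + \log|y|\bigr) \dd(\bX' - \bX_\La)(x) \sum_{i=0}^p \chi_i(y) \dd(\bX - \Leb)(y).
\end{equation*}
Since $\log|y|$ does not depend on $x$ and $\int_\La \dd(\bX' - \bX_\La) = \Points(\bX',\La) - \Points(\bX,\La) = 0$ by hypothesis, the $\log|y|$ contribution vanishes and one is left precisely with the integrand of $\MLa^p(\bX', \bX)$.

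Next I would invoke Proposition~\ref{prop:Zero}.A.1, which yields a full $\PI$-measure set of configurations $\bX$ with the property that $\lim_{p \to \infty} \tMLa^p(\bX'', \bX)$ exists and is finite for every point configuration $\bX''$ in $\La$. Specializing this both to $\bX'' = \bX'$ and to $\bX'' = \bX_\La$ (the latter being itself a valid configuration in $\La$), and passing to the limit $p \to \infty$ in the identity above, one obtains
\begin{equation*}
\MLa(\bX', \bX) = \tMLa(\bX', \bX) - \tMLa(\bX_\La, \bX),
\end{equation*}
which exists and is finite for $\PI$-almost every $\bX$ and every admissible $\bX'$. This proves the Corollary and therefore Theorem~\ref{theo:zero}.

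There is no substantial obstacle: all the analytic content — dyadic decomposition, uniform bounds on Lipschitz fluctuations from \eqref{eq:bound_Fluct_LIPINF}, and control of smooth statistics from \eqref{eq:bound_Fluct_smoothINF} — has already been absorbed into Proposition~\ref{prop:Zero}.A.1, so only the elementary cancellation tied to the charge-neutrality of $\bX' - \bX_\La$ remains to be carried out.
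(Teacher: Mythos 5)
Your proposal is correct and follows essentially the same route as the paper: the paper records the identity $\MLa(\bX',\bX) = \tMLa(\bX',\bX) - \tMLa(\bX,\bX)$ (equivalently $\tMLa(\bX_\La,\bX)$, since the first argument is restricted to $\La$) and then says the Corollary is an "easy consequence" of Proposition~\ref{prop:Zero}. You simply unpack this by writing the cancellation identity at each finite truncation level $p$ and invoking Proposition~\ref{prop:Zero}.A.1 twice, once for $\bX'$ and once for $\bX_\La$; this is exactly the intended argument, spelled out in more detail.
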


\subsection{Discussion on the “last layer”.} 
\label{sec:LastLayer}
In the last step of the proof of Proposition \ref{prop:Zero}, there would be a slight difference of treatment in one were to choose the RMT convention of a soft confinement of the particles instead of a hard wall along $\partial \LN$. For smooth functions intersecting the boundary, one would rather apply the “Macroscopic boundary case” of \cite[Thm. 1]{leble2018fluctuations} (which is a bit simpler to obtain in the radial case than in the general setting of \cite{leble2018fluctuations}), and use it to bound $\EN \left[ \left|\Fluct\left[J_i \right](\bX)\right| \right]$.

In the setup that we have chosen here, it seems quite difficult to avoid relying on fine results, e.g. the hyperuniformity statement of \cite{leble2021two}. Indeed, the presence of $\1_{\LN}$ prevents us from applying “smooth” techniques. 

Another option would of course be to place the 2DOCP on a sphere (as is sometimes done in the physics literature) instead of a disk, and to get rid of boundary effects altogether.

\section{The canonical DLR equations: proof of Theorem \ref{theo:DLR}}
\label{sec:DLR_expression}
Once the existence of “move” functions is established, together with truncations estimates, the proof of \eqref{DLR} follows a classical route, see e.g. closely related examples in \cite[Sec. 2]{dereudre2021dlr} or \cite[Sec. 2.3]{dereudre2023number}.

We first introduce some notation: if $f : \Conf \to \R$ is measurable and bounded, and $\La \subset \R^2$ is a bounded Borel set, we let
\begin{equation*}
	f_\Lambda(\bX) 
	= 
\frac{1}{\KLbeta(\bX)} \int_{\Conf(\La, \Points(\bX, \La))} f\left(\bX' \cup \bX_{\bLa} \right) \exp\left(-\beta(\FL(\bX') + \MLa(\bX', \bX)\right) \dd \Bin_{\La, \Points(\bX, \La)}(\bX'),
\end{equation*}
where $\KLbeta(\bX)$ is the corresponding partition function. For $p \geq 1$, we define similarly:
\begin{equation*}
    \fLp(\bX) 
    =  \frac{1}{\KLbetap(\bX)} \int_{\Conf(\La, \Points(\bX, \La))} f\left(\bX' \cup \bX_{\bLa}  \right) \exp\left(-\beta(\FL(\bX') + \MLa^p(\bX', \bX)\right) \dd \Bin_{\La, \Points(\bX, \La)}(\bX'),
\end{equation*}
where $\KLbetap(\bX)$ is the “truncated” partition function, namely:
\begin{equation}
\label{def:KLbetap}
\KLbetap(\bX) :=  \int_{\Conf(\La, \Points(\bX, \La))} \exp\left(-\beta(\FL(\bX') + \MLa^p(\bX', \bX)\right) \dd \Bin_{\La, \Points(\bX, \La)}(\bX').
\end{equation}
Proving the DLR equations thus consists in showing that
\begin{equation}
\label{eq:DLREPI}
\text{For all bounded Borel $\La \subset \R^2$, for all bounded measurable $f : \Conf \to \R$}, \ \E_{\PI}[f] = \E_{\PI}[f_\La].
\end{equation}

We also introduce: 
\begin{equation*}
    \fNL(\bX) 
    = 
\frac{1}{\KLbeta(\bX)} \int_{\Conf(\La, \Points(\bX, \La))} f\left(\bX' \cup \bX_{\bLa} \right) \exp\left(-\beta(\FL(\bX') + \MLaN(\bX', \bX)\right) \dd \Bin_{\La, \Points(\bX, \La)}(\bX').
\end{equation*}
Recall (see Remark \ref{rem:Nplarge}) that for $p$ fixed and $N$ large enough, $\MLaN^p$ and $\MLa^p$ are the same quantity, so it will not be necessary to introduce $f_{\La, N}^p$.

\subsection{Truncation errors}
For $p \geq 1$ and $\delta >0$, define the event $\mathsf{A}_\Lambda^p(\delta) \subset \Conf$ as:
\begin{equation}
	\mathsf{A}_\Lambda^p(\delta) = \left\{
	\bX \in \Conf : \sup_{\bX' \in \Conf(\Lambda, \Points(\bX, \La))} \left| \MLa(\bX', \bX) - \MLa^p(\bX', \bX) \right| \leq \delta.
	\right\}
\end{equation}
If $\bX$ is in $\mathsf{A}_\Lambda^p(\delta)$, we can approximate the total move functions $\MLa(\bX', \bX)$ by their local versions $\MLa^p(\bX', \bX)$ at (dyadic) scale $p$ while making an error of size at most $\delta$ uniformly with respect to $\bX'$. 

Define $ \mathsf{A}_{\Lambda,N}^p(\delta)$ similarly with $\MLaN$ instead of $\MLa$.

\begin{lemma}
\label{lem:good_move}
	For all $\delta > 0$ and $\varepsilon>0$, there exists $p$ such that:
\begin{enumerate}
    \item 
    \begin{equation*}
        \PI\left(\mathsf{A}_\Lambda^p(\delta)\right) \geq 1 - \varepsilon. 
    \end{equation*}
    \item  For all $N$ large enough:
    \begin{equation*}
        \PpNx\left(\mathsf{A}_{\Lambda, N}^p(\delta)\right) \geq 1 - \varepsilon. 
    \end{equation*}
\end{enumerate}
\end{lemma}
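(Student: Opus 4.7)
The plan is to reduce both statements to the $L^1$ truncation bounds already established in Proposition \ref{prop:Zero}, by stratifying over the (random) number of points $\Points(\bX, \La)$.

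First I rewrite the quantity of interest in terms of $\tMLa$: since
\begin{equation*}
\MLa(\bX', \bX) - \MLa^p(\bX', \bX) = \bigl(\tMLa(\bX', \bX) - \tMLa^p(\bX', \bX)\bigr) - \bigl(\tMLa(\bX_\La, \bX) - \tMLa^p(\bX_\La, \bX)\bigr),
\end{equation*}
and $\bX_\La \in \Conf(\La, \Points(\bX, \La))$, we have the pointwise bound
\begin{equation*}
\sup_{\bX' \in \Conf(\La, \Points(\bX, \La))} |\MLa(\bX', \bX) - \MLa^p(\bX', \bX)| \leq 2 \sup_{\bX' \in \Conf(\La, \Points(\bX, \La))} |\tMLa(\bX', \bX) - \tMLa^p(\bX', \bX)|,
\end{equation*}
so it suffices to make the right-hand side smaller than $\delta$ with probability $\geq 1-\epsilon$, and analogously in finite volume.

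For item (1), I use Lemma \ref{lem:ControlPointsInfini} to pick $M = M(\epsilon, \La)$ so large that $\PI(\Points(\bX, \La) > M) \leq \epsilon/2$. Then I decompose:
\begin{equation*}
\PI\bigl((\mathsf{A}_\La^p(\delta))^c\bigr) \leq \PI(\Points(\bX, \La) > M) + \sum_{n=0}^{M} \PI\!\left( \sup_{\bX' \in \Conf(\La, n)} |\tMLa(\bX', \bX) - \tMLa^p(\bX', \bX)| > \delta/4 \right).
\end{equation*}
By Proposition \ref{prop:Zero}, item A.2, each of the finitely many expectations
\begin{equation*}
\E_{\PI}\!\left[ \sup_{\bX' \in \Conf(\La, n)} |\tMLa(\bX', \bX) - \tMLa^p(\bX', \bX)| \right], \quad n = 0, 1, \dots, M,
\end{equation*}
tends to $0$ as $p \to \infty$. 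Markov's inequality and choosing $p$ large enough that each of these $M+1$ terms contributes at most $\epsilon/(2(M+1))$ to the sum gives item (1).

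For item (2), the same decomposition applies to $\PpNx$, using part B of Proposition \ref{prop:Zero} instead of part A.2. The only extra ingredient is a uniform (in $N$) tail bound $\PpNx(\Points(\bX, \La) > M) \leq \epsilon/2$: this follows from Lemma \ref{lem:points} applied at $(\x(N), \ell)$ with $\ell$ chosen so that $\DD(\x(N), \ell) \supset \x(N) + \La$ (using that $\x(N)$ is in the bulk, \eqref{condi:LL} is satisfied for $N$ large), together with the identity in law of $\Points(\bX, \La)$ under $\PpNx$ and $\Points(\bXN, \x(N)+\La)$ under $\PNbeta$. For each fixed $n \leq M$, the iterated limit in Proposition \ref{prop:Zero} B lets us first choose $p$ large and then $N$ large so that the $\PpNx$-expectation of $\sup_{\Conf(\La,n)} |\tMLaN - \tMLaNp|$ is at most $\delta \epsilon / (8(M+1))$, and Markov closes the argument.

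The only mild subtlety is keeping the truncation threshold $p$ \emph{independent} of $n$: this is painless because we only need to control a finite number of values of $n$, so we may simply take the maximum of the thresholds $p_n$ over $n \leq M$. There is no real obstacle; the whole lemma is essentially a bookkeeping exercise that packages Proposition \ref{prop:Zero} into the form needed by the DLR comparison in Section \ref{sec:DLR_expression}.
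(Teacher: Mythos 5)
Your proof is correct and takes essentially the same approach as the paper: the paper's (very brief) argument is precisely to apply Markov's inequality twice, once to control the number of points of $\bX$ in $\La$ via Lemma \ref{lem:ControlPointsInfini} (or Lemma \ref{lem:points}), and once to control the truncation error via Proposition \ref{prop:Zero}. You fill in the bookkeeping details—reduction from $\MLa-\MLa^p$ to $\tMLa-\tMLa^p$ with a factor $2$, stratification over $n\leq M$, taking $p=\max_n p_n$—exactly as the paper's proof sketch implies.
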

\begin{proof}
This follows from \eqref{uniformtMLa} or \eqref{uniformtMLa_N} on the one hand and Lemma \ref{lem:ControlPointsInfini} or Lemma \ref{lem:points} on the other hand, using Markov's inequality twice: to bound the number of points of $\bX$ in $\La$ up to an event of probability smaller than $\hal \varepsilon$, and to bound the truncation error using Proposition \ref{prop:Zero} for $p$ large enough, again up to an event of probability smaller than $\hal \varepsilon$.
\end{proof}

\begin{lemma}[Using truncations]
	For all $\varepsilon > 0$, if $p$ is large enough:
    \begin{itemize}
        \item  For any bounded measurable function $f$ we have
    \begin{equation}
    \label{IVTrun}       
        \left| \E_{\PI} \left[\fL - \fLp\right]\right| \leq \varepsilon \|f\|_\infty
    \end{equation}
    \item If $N$ is large enough, for any bounded measurable function $f$ we have:
    \begin{equation}
    \label{FVTrun}       
        \left| \E_{\PpNx} \left[\fNL - \fLp\right]\right| \leq \varepsilon  \|f\|_\infty
    \end{equation}
    \end{itemize}
\end{lemma}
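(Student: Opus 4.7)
The key observation is that on the event $\mathsf{A}_\La^p(\delta)$, the discrepancy $\MLa - \MLa^p$ is uniformly bounded by $\delta$ over all admissible $\bX' \in \Conf(\La, \Points(\bX,\La))$, so the two Boltzmann densities defining $\fL(\bX)$ and $\fLp(\bX)$ differ only by a multiplicative factor $w(\bX') := \exp\bigl(-\beta(\MLa(\bX',\bX) - \MLa^p(\bX',\bX))\bigr)$ lying in the interval $[e^{-\beta\delta}, e^{\beta\delta}]$. A uniform multiplicative perturbation of a probability density distorts normalized expectations only by $O(\beta\delta)$; off the good event one uses the trivial a priori bound $|\fL|, |\fLp| \leq \|f\|_\infty$; and Lemma \ref{lem:good_move} guarantees that the good event has probability arbitrarily close to $1$ once $p$ is large.

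\textbf{Key computation on the good event.} Fix $\bX \in \mathsf{A}_\La^p(\delta)$, set $n := \Points(\bX,\La)$, and write $F(\bX') := f(\bX' \cup \bX_{\bLa})$. Let $\mu$ be the finite positive measure on $\Conf(\La,n)$ with density $e^{-\beta(\FL(\bX') + \MLa^p(\bX',\bX))}$ with respect to $\Bin_{\La,n}$. Then $\fLp(\bX) = \mu(F)/\mu(1)$ and $\fL(\bX) = \mu(wF)/\mu(w)$. Decomposing $w = 1 + (w-1)$ and simplifying yields
\begin{equation*}
\fL(\bX) - \fLp(\bX) \;=\; \frac{\mu(1)\,\mu\bigl(F(w-1)\bigr) \;-\; \mu(F)\,\mu(w-1)}{\mu(w)\,\mu(1)}.
\end{equation*}
Using $|w-1| \leq e^{\beta\delta}-1$, $|\mu(F)| \leq \|f\|_\infty\mu(1)$, and $\mu(w) \geq e^{-\beta\delta}\mu(1)$, one obtains the uniform bound
\begin{equation*}
|\fL(\bX) - \fLp(\bX)| \;\leq\; 2\, e^{\beta\delta}\bigl(e^{\beta\delta}-1\bigr)\|f\|_\infty \;\leq\; C_\beta\, \delta \,\|f\|_\infty
\end{equation*}
valid for $\delta \leq 1$. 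Note that the (potentially singular) common factor $e^{-\beta \FL}$ cancels in this ratio, so the logarithmic diagonal singularity of $\FL$ causes no trouble here.

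\textbf{Conclusion and finite volume.} Combining with the trivial bound $|\fL - \fLp| \leq 2\|f\|_\infty$ off the good event gives
\begin{equation*}
|\E_{\PI}[\fL - \fLp]| \;\leq\; C_\beta\, \delta \,\|f\|_\infty \;+\; 2\|f\|_\infty\, \PI\bigl(\mathsf{A}_\La^p(\delta)^c\bigr).
\end{equation*}
Given $\varepsilon > 0$, I choose $\delta$ small enough that $C_\beta \delta \leq \varepsilon/2$, and then by Lemma \ref{lem:good_move}(1) pick $p$ large enough that $\PI(\mathsf{A}_\La^p(\delta)^c) \leq \varepsilon/4$, yielding \eqref{IVTrun}. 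For \eqref{FVTrun}, I replace $\PI$ by $\PpNx$, $\MLa$ by $\MLaN$, $\fL$ by $\fNL$, and $\mathsf{A}_\La^p$ by $\mathsf{A}_{\La,N}^p$; the pointwise computation above applies verbatim (the index $N$ is a fixed spectator in the definitions), and one invokes Lemma \ref{lem:good_move}(2) to control the bad event for $N$ large. The only mild technical point is confirming that $\mu$ has finite positive total mass, which follows from standard logarithmic-integrability arguments on the bounded domain $\La$; this is not really an obstacle, merely something to acknowledge.
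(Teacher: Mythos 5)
Your proof is correct and takes essentially the same approach as the paper: split on the good event $\mathsf{A}_\La^p(\delta)$, control the perturbation of the Boltzmann weight and of the normalization by a factor $e^{\beta\delta}-1$, use the trivial $2\|f\|_\infty$ bound off the good event, and invoke Lemma~\ref{lem:good_move} to make the bad event small by taking $p$ large. The only difference is cosmetic: your measure-theoretic bookkeeping (writing $\fL=\mu(wF)/\mu(w)$, $\fLp=\mu(F)/\mu(1)$ and expanding) is a somewhat cleaner rearrangement of the same algebraic identity that the paper writes out term by term in its display for $f_\Lambda-\fLp$, and it yields the same final estimate up to an immaterial factor $e^{\beta\delta}$ in the constant.
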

\begin{proof} The starting point is to write the difference as 
	\begin{multline}
		\label{eq:diff_truncation}
f_\Lambda(\bX) - \fLp(\bX) = \frac{1}{\KLbeta(\bX)} \int f(\bX' \cup \bX_\Lambda) 
		\left(        e^{-\beta\left(\FLa(\bX') + \MLa(\bX', \bX) \right)}
        -       e^{-\beta\left(\FLa(\bX') + \MLa^p(\bX', \bX) \right)}  \right)
		\dd \Bin_{\Lambda, \Points(\bX, \Lambda)}(\bX')
		\\
		+ \left(
		\frac{1}{\KLbetap(\bX)} - \frac{1}{\KLbeta(\bX)} 
		\right)
		\int f(\bX' \cup \bX_\Lambda) e^{-\beta\left(\FLa(\bX') + \MLa(\bX', \bX)\right)} 
		\dd \Bin_{\Lambda, \Points(\bX, \Lambda)}(\bX'). 
	\end{multline}
	
	If $\bX \in \mathsf{A}_\Lambda^p(\delta)$ then for every $\bX' \in \Conf(\Lambda)$ such that $\Points(\bX',\La) = \Points(\bX, \La)$ we have:
	\begin{equation}
		\label{eq:diff_exp}
		\left|
		e^{-\beta(\FL(\bX') + \MLa(\bX', \bX)} - e^{-\beta(\FL(\bX') + \MLa^p(\bX', \bX)) }
		\right|
		\leq 
		(e^{\beta \delta} -1) e^{-\beta(\FL(\bX') + \MLa^p(\bX', \bX))}, 
	\end{equation}
	and in particular we can control truncation error in the partition functions:
	\begin{equation}
		\label{eq:relativ_diff}
		\left|
		\KLbetap(\bX) - \KLbeta(\bX) 
		\right|
		\leq 
		(e^{\beta \delta} -1) \KLbetap(\bX). 
	\end{equation}	
	Inserting \eqref{eq:diff_exp} and \eqref{eq:relativ_diff} in \eqref{eq:diff_truncation} shows that for every $\bX \in \mathsf{A}_\Lambda^p(\delta)$ we have 
	\begin{equation*}
		\left|
		\fLp(\bX) - \fL(\bX) 
		\right|
		\leq 
		2(e^{\beta \delta} -1) \normf. 
	\end{equation*}
	Finally, we may write:
	\begin{equation*}
		\big| \E_{\PI} (\fLp - \fL) \big|
		\leq 
		2 (e^{\beta \delta} - 1)\normf + 2(1 - \PI(\mathsf{A}_\Lambda^p(\delta))) \normf
	\end{equation*}
	Fixing $\delta$ small enough to have $2(e^{\beta \delta} -1) \leq \frac{\varepsilon}{2}$, and taking $p$ large enough such that $\PI(\mathsf{A}_\Lambda^p(\delta)) \geq 1 - \frac{\varepsilon}{4}$ (by Lemma~\ref{lem:good_move}), we obtain \eqref{IVTrun}. 

    The proof of \eqref{FVTrun}  follows the same lines.
\end{proof}

The DLR equations in infinite volume will be inherited from those satisfied in finite volume. 

\subsection{Finite volume canonical DLR equations}
\begin{proposition}
\label{prop:finite-DLR}
	For all bounded Borel set $\Lambda$ and for all bounded measurable function $f : \Conf \to \R$, provided $N$ is large enough that $\Lambda \subset \LN$, we have: 
	\begin{equation}
    \label{finiteDLR}
		\E[f(\bXN)] = \E[f_{\La,N}(\bXN)]
	\end{equation}
\end{proposition}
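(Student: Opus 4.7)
The identity \eqref{finiteDLR} is the finite-volume DLR equation for $\PNbeta$, i.e.\ the assertion that the random variable $f_{\La, N}(\bXN)$ is a version of the conditional expectation $\E\bigl[f(\bXN) \,\big|\, \bXN_{\bLa}\bigr]$ under $\PNbeta$ (with the normalization in $f_{\La, N}$ understood, as it must be, relative to $\MLaN$). Because $\PNbeta$ is an explicit finite-dimensional Gibbs measure, the proof is purely algebraic. I will (i) decompose the reference binomial measure $\Bin_{\LN, N}$ conditionally on the number of points inside $\La$; (ii) decompose the energy $\FN$ so as to extract the combination $\FL(\bX') + \MLaN(\bX', \bX)$ appearing in $f_{\La, N}$; and (iii) exhibit a cancellation that identifies $f_{\La, N}$ with the desired conditional expectation.

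\textbf{Energy decomposition.} Writing the signed measure $\bXN - \Leb_{\LN}$ as $(\bXN_\La - \Leb_\La) + (\bXN_{\LN \setminus \La} - \Leb_{\LN \setminus \La})$ and expanding \eqref{def:FN}, for any inside configuration $\bX'$ in $\La$ and outside configuration $\bY$ in $\LN \setminus \La$,
\[
\FN(\bX' \cup \bY) = \FL(\bX') + \F_{\LN \setminus \La}(\bY) + C(\bX', \bY),
\]
where the cross interaction reads $C(\bX', \bY) := \iint_{\La \times (\LN \setminus \La)} -\log|x-y|\,\dd(\bX' - \Leb_\La)(x)\,\dd(\bY - \Leb_{\LN \setminus \La})(y)$. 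Inspecting \eqref{def:MLaNp} (and noting that since $\bY$ has finitely many points the $p \to \infty$ limit is trivial, cf.\ Remark~\ref{rem:Nplarge}), one reads off that whenever $\bX', \bX''$ both lie in $\Conf(\La, n)$,
\[
\MLaN(\bX'', \bX' \cup \bY) = C(\bX'', \bY) - C(\bX', \bY).
\]
Consequently,
\[
\FN(\bX'' \cup \bY) = \FL(\bX'') + \MLaN(\bX'', \bX' \cup \bY) + R_n(\bX', \bY),
\]
where $R_n(\bX', \bY) := \F_{\LN \setminus \La}(\bY) + C(\bX', \bY)$ is independent of $\bX''$.

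\textbf{Reassembly and cancellation.} The Binomial measure $\Bin_{\LN, N}$ decomposes, conditionally on having $n$ points in $\La$, as the independent product $\Bin_{\La, n} \otimes \Bin_{\LN \setminus \La, N-n}$ weighted by the appropriate binomial coefficient. Plugging this splitting into $\E[f_{\La, N}(\bXN)]$ and using the energy decomposition above inside the integrand defining $f_{\La, N}(\bX' \cup \bY)$, the factor $e^{-\beta R_n(\bX', \bY)}$ appears identically in the numerator and the denominator and cancels. This shows that $f_{\La, N}(\bX' \cup \bY)$ depends only on $(\bY, n)$ and is equal to
\[
\frac{\int f(\bX'' \cup \bY)\, e^{-\beta \FN(\bX'' \cup \bY)}\, \dd \Bin_{\La, n}(\bX'')}{\int e^{-\beta \FN(\bX'' \cup \bY)}\, \dd \Bin_{\La, n}(\bX'')}.
\]
Substituting this back into $\E[f_{\La, N}(\bXN)]$ and applying Fubini to swap the roles of the dummy variables $\bX'$ and $\bX''$ yields $\E[f_{\La, N}(\bXN)] = \E[f(\bXN)]$.

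\textbf{Main obstacle.} There is no serious analytic obstacle: everything reduces to finite-dimensional integration and a standard conditional-expectation calculation. The only point requiring some vigilance is to keep track of the reference-configuration ambiguity in the definition of $\MLaN$, which disappears precisely because the charge-balance constraint $\Points(\bX', \La) = \Points(\bX'', \La)$ forces only the difference $C(\bX'', \bY) - C(\bX', \bY)$ to enter the exponential weight.
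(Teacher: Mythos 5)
Your proof is correct and follows essentially the same route as the paper's: the two key identities you establish (the decomposition of $\FN$ into a $\La$--$\La$ term, a complement term, and a cross term, and the expression of $\MLaN$ as the difference of cross interactions, which requires the charge-balance constraint) are exactly the two observations made in the paper. The remaining binomial-measure manipulation, which the paper delegates to \cite{dereudre2021dlr,dereudre2023number}, you carry out explicitly, and the cancellation of the $\bX''$-independent factor $R_n(\bX', \bY)$ between numerator and denominator is precisely the right point to isolate.
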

\begin{proof}[Proof of Proposition \ref{prop:finite-DLR}]
We make the following simple observations: 
\begin{itemize}
    \item If $\bXN$ is a point configuration in $\LN$ and $\La \subset \LN$, which we decompose as $\bXN = \bX_\La \cup \bX_{\bLa}$, we have:
\begin{equation*}
\FN(\bXN) = \FL(\bX_\La) + \FbL(\bX_{\LN \setminus \La}) + \iint_{\La \times (\LN \setminus \La)} - \log|x-y| \dd \left(\bX - \Leb\right)(x) \dd \left(\bX - \Leb\right)(y),
\end{equation*}
with the notation of \eqref{def:FLa} for local interaction energy.
    \item If $\bX'$ is another point configuration in $\La$, such that $\Points(\bX', \La) = \Points(\bX, \La)$, we have:
  \begin{multline*}
 \MLaN(\bX', \bX) =  \iint_{\La \times (\LN \setminus \La)} - \log|x-y| \dd \left(\bX' - \Leb\right)(x) \dd \left(\bX - \Leb\right)(y) \\ - \iint_{\La \times (\LN \setminus \La)} - \log|x-y| \dd \left(\bX - \Leb\right)(x) \dd \left(\bX - \Leb\right)(y).
  \end{multline*}  
\end{itemize}
The finite-volume canonical DLR equations then follow from simple manipulations of binomial point processes, see e.g. \cite[Prop.~2.8]{dereudre2021dlr} for the proof in the context of $1d$ log-gases or \cite[Prop.~2.10.]{dereudre2023number} for Riesz gases (the precise nature of the pairwise interaction matters very little here).
\end{proof}
The finite-volume DLR equations \eqref{finiteDLR} are stated for $\bXN = \bX_{N, \mathsf{0}}$, namely the local point configuration \emph{seen from the origin}. We can extend them readily for all the local finite point processes $\PpNx$ by shifting the test function $f$ accordingly:
\begin{corollary}
For all bounded Borel set $\La$ and for all bounded measurable function $f : \Conf \to \R$, provided $N$ is large enough\footnote{Observe that if $\La$ is a fixed Borel set and $\x = \x(N)$ is “in the bulk” in the sense of \eqref{def:bulk}, then $\La + x \subset \LN$ for all $N$ sufficiently large.} that $\La + \x \subset \LN$, we have:
    \begin{equation}
    \label{finiteDLR2}
        \E_{\PpNx}[f(\bX)] = \E_{\PpNx}[f_{\La,N}(\bX)].
    \end{equation}
\end{corollary}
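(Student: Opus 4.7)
The statement is an immediate translation of Proposition \ref{prop:finite-DLR} applied to a shifted test function and shifted window; the plan is to carry out precisely that reduction, and to verify translation-covariance of each ingredient entering the local specification.

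\textbf{Step 1: reduction by shifting.} Let $\tau_u$ denote the push-forward action on configurations $\bX \mapsto \sum_{x\in\bX}\delta_{x+u}$, so that by the very definition \eqref{def:bXNx} we have $\bXNx=\tau_{-\x}\bXN$. Given a bounded measurable $f:\Conf\to\R$ and the bounded Borel set $\La$, set
\begin{equation*}
\tilde f(\bY) := f(\tau_{-\x}\bY), \qquad \Lambda' := \La + \x.
\end{equation*}
The assumption $\La+\x\subset\LN$ means $\Lambda'\subset\LN$, so Proposition~\ref{prop:finite-DLR} applies and yields $\E[\tilde f(\bXN)] = \E[\tilde f_{\Lambda',N}(\bXN)]$. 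Since $\tilde f(\bXN)=f(\bXNx)$, the left-hand side is already $\E_{\PpNx}[f(\bX)]$. The remaining task is to identify $\tilde f_{\Lambda',N}(\bXN)$ with $f_{\La,N}(\bXNx)$.

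\textbf{Step 2: change of variables in the local specification.} In the integral defining $\tilde f_{\Lambda',N}(\bXN)$, I would substitute the dummy configuration $\bY'\in\Conf(\Lambda',\Points(\bXN,\Lambda'))$ by $\bY'=\tau_{\x}\bX'$ with $\bX'\in\Conf(\La,n)$, where $n=\Points(\bXN,\Lambda')=\Points(\bXNx,\La)$ (counts are translation-invariant). Three translation-covariance properties then need to be invoked:
\begin{itemize}
\item The binomial reference measure satisfies $(\tau_{-\x})_{*}\Bin_{\Lambda',n}=\Bin_{\La,n}$ because Lebesgue measure is translation-invariant.
\item The self-interaction satisfies $\FL_{\Lambda'}(\tau_{\x}\bX')=\FL_{\La}(\bX')$: this follows directly from the definition \eqref{def:FLa} by substitution $x\mapsto x-\x,\ y\mapsto y-\x$, using invariance of $-\log|x-y|$ and of $\Leb$.
\item The cross-interaction term transforms analogously. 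Using the explicit formula for $\MLaN$ displayed in the proof of Proposition~\ref{prop:finite-DLR}, the same change of variables gives
\begin{equation*}
\MLaN_{\Lambda'}(\tau_{\x}\bX',\bXN) = \iint_{\La\times((\LN-\x)\setminus\La)} -\log|x-y|\,\dd(\bX'-\bXNx)(x)\,\dd(\bXNx-\Leb)(y),
\end{equation*}
which is exactly the expression one obtains by writing $\MLaN$ for the configuration $\bXNx$, with the natural reading that the outside integration runs over the set $\LN-\x$ where $\bXNx$ is supported (this is the only mild notational point).
\end{itemize}

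\textbf{Step 3: matching the test function and concluding.} Finally, since $\tau_{-\x}$ commutes with restriction to any Borel set, $\tau_{-\x}(\tau_{\x}\bX'\cup\bXN_{\overline{\Lambda'}})=\bX'\cup\bXNx_{\overline{\La}}$, hence $\tilde f(\tau_{\x}\bX'\cup\bXN_{\overline{\Lambda'}}) = f(\bX'\cup\bXNx_{\overline{\La}})$. Combining this with the three identities above term by term, including in the partition function $\KLbeta_{\Lambda'}(\bXN)=\KLbeta_{\La}(\bXNx)$, gives $\tilde f_{\Lambda',N}(\bXN)=f_{\La,N}(\bXNx)$ pointwise, and taking expectations under $\PNbeta$ yields \eqref{finiteDLR2}. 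There is no genuine obstacle here: the corollary is essentially a bookkeeping consequence of Proposition~\ref{prop:finite-DLR} together with the invariance of $-\log|x-y|$ and $\Leb$ under translation; the only care required is in writing the various objects indexed by $\La$ versus $\La+\x$ consistently on both sides of the identity.
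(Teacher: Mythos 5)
Your proof is correct and follows the same route the paper indicates (which dismisses the matter with one line: "extend readily by shifting the test function accordingly"). You spell out the change of variables in the local specification and correctly identify the one genuine notational subtlety, namely that $\MLaN(\,\cdot\,,\bXNx)$ must be read with the indicator $\1_{\LN-\x}$ (the support of $\bXNx$) rather than $\1_{\LN}$; everything else is translation-covariance of $-\log|\cdot|$, $\Leb$, and the binomial reference measure, exactly as you verify.
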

\renewcommand{\u}{\vec{u}} \newcommand{\Ppp}{\Pp^+} \newcommand{\Ppm}{\Pp^-}

\subsection{Passing the finite-volume DLR equations to the limit: proof of Theorem \ref{theo:DLR}}
The proof of Theorem \ref{theo:DLR} is then standard. Take $f$ bounded, measurable \emph{and assume moreover that $f$ is local}. We write:
\begin{multline*}
\E_{\PI}[f] - \E_{\PI}[f_\La] = 
\left(\E_{\PI}[f] - \E_{\PpNx}[f]\right) + \left(\E_{\PpNx}[f] - \E_{\PpNx}[f_\La]\right) + \left(\E_{\PpNx}[f_\La] - \E_{\PpNx}[f^p_\La]\right) \\
+ \left(\E_{\PpNx}[f^p_\La] - \E_{\PI}[f^p_\La]\right) + \left(\E_{\PI}[f^p_\La] - \E_{\PI}[f_\La]\right).
\end{multline*}
The first and the second-to-last parentheses in the right-hand side are $o_N(1)$ by definition of local convergence (note that although $f_\La$ is \emph{not} local in general, $f^p_\La$ does remain a local function - which is the whole point of introducing truncations). The second one vanishes for $N$ large enough by the finite-volume DLR equations in the form \eqref{finiteDLR2}.  Moreover, by \eqref{IVTrun}, \eqref{FVTrun} we know that if $p$ and $N$ are large enough the third and fourth parentheses can be made arbitrarily small. We deduce that:
\begin{equation*}
\E_{\PI}[f]  = \E_{\PI}[f_\La]
\end{equation*}
for all bounded Borel $\La$, and all bounded measurable \emph{local} function $f$. Since local functions generate the $\sigma$-algebra on $\Conf$, one can then extend this result to all measurable functions by a monotone class argument. We obtain Theorem \ref{theo:DLR}.

\section{Translation-invariance: proof of Theorem \ref{theo:TI}}
\label{sec:ProofTI}
In this section we adapt the argument of Fröhlich-Pfister \cite{frohlich1981absence,frohlich1986absence} (see also \cite{MR1886241}) in order to prove translation-invariance of all Gibbs states (satisfying some technical conditions, see Section \ref{sec:good}). 

The basic idea is to construct “localized translations”: namely smooth, preferably area-preserving diffeomorphisms of $\R^2$ which act as true translations within a large region and are then “dampened” in order to coincide with the identity map outside some larger region. The point is then to evaluate the effect of such localized translations on the interaction energy. A striking fact is that if one can prove that the energy is shifted by at most a constant (either deterministically or in a strong enough probabilistic sense) under those localized translations, uniformly with respect to the size of the region, then it ensures that all Gibbs states are translation-invariant. Moreover, a classic trick in this field is to consider the \emph{average} effect of localized translations \emph{in two opposite directions} - in practice, this makes the energy shift easier to control while still being enough to reach the desired conclusion.

In the rest of Section \ref{sec:ProofTI}, we fix a solution $\Pp$ to \eqref{DLR} and a vector $\vu$ in $\R^2$ with $\|\vu\| \leq 1$ and we show (under some technical conditions, see Section \ref{sec:good}) that $\Pp$ is invariant by translation under $\vu$. Since $\vu$ is arbitrary, this clearly implies that $\Pp$ is translation-invariant. We write $\Ppp, \Ppm$ to denote the point processes obtained after translating $\Pp$ by $\pm \vu$.

\newcommand{\TLp}{\mathsf{T}^+_{L}}
\newcommand{\TLm}{\mathsf{T}^-_{L}}
\newcommand{\TLpm}{\mathsf{T}^\pm_{L}}
\newcommand{\psiLpm}{\psi_{L}^{\pm}}
\newcommand{\psiLp}{\psi_{L}^{+}}
\newcommand{\psiLm}{\psi_{L}^{-}}
\newcommand{\Tp}{\mathsf{T}^+}
\newcommand{\Tm}{\mathsf{T}^-}

\subsection{A construction of Georgii's}
\begin{lemma}[Existence of localized translations]
\label{lem:loc_trans} 
For $L \geq 1$, there exists two diffeomorphisms $\TLp, \TLm$ on $\R^2$ which are inverses of each other and such that:
\begin{enumerate}
    \item $\TLp, \TLm$ are area-preserving.
    \item We have:
    \begin{equation}
    \label{localized}
 \TLpm(x) = x \pm \vu \text{ when }|x| \leq L, \quad \TLpm(x) = x \text{ when } |x| \geq 2L.
    \end{equation}
    \item Letting $\psiLpm := \TLpm - \Id$ we have:
\begin{equation}
\label{eq:psipm}
|\psiLpm|_\kk \leq \Cc L^{-\kk}, \kk = 0, 1, 2, 3.
\end{equation}
\item We can write:
\begin{equation}
\label{eq:RemL}
\psiLp + \psiLm = \Rem_L, \text{ with } |\Rem_L|_{\kk}  \leq \Cc L^{-\kk-1}, \kk = 0, 1, 2.
\end{equation}
\end{enumerate}
\end{lemma}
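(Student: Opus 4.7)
The plan is to realize $\TLpm$ as the time-$\pm 1$ maps of an autonomous, divergence-free vector field $V_L$ on $\R^2$: the flow of such a field is automatically area-preserving and the two time maps are mutual inverses, so property (1) is free. Writing $V_L = \nabla^\perp H_L$ for a stream function $H_L$ enforces divergence-freeness, so the problem reduces to choosing $H_L$.

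The natural choice is $H_L := \chi_L \cdot \psi$, where $\psi : \R^2 \to \R$ is the (unique up to constant) linear function satisfying $\nabla^\perp \psi = \vu$, and $\chi_L$ is a smooth radial cutoff equal to $1$ on $\DD(L+1)$ and to $0$ outside $\DD(2L-1)$, with $|\chi_L|_\kk \leq \Cc L^{-\kk}$ for $\kk = 0, \dots, 3$. Then $V_L = \chi_L \, \vu + \psi \, \nabla^\perp \chi_L$ satisfies $|V_L|_\0 = O(1)$ (using $|\psi(x)| \leq \Cc L$ on $\supp \chi_L$) and $|V_L|_\kk = O(L^{-\kk})$ for $\kk = 1, 2, 3$. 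Moreover $V_L \equiv \vu$ on $\DD(L+1)$ and $V_L \equiv 0$ outside $\DD(2L-1)$. Defining $\TLpm := \phi^{\pm 1}$ for $\phi^t$ the flow of $V_L$, the localization (2) is immediate: for $|x| \leq L$, since $\|\vu\| \leq 1$ the trajectory stays in $\DD(L+1)$ for $|t| \leq 1$, so $V_L \equiv \vu$ along it and $\phi^{\pm 1}(x) = x \pm \vu$; points outside $\DD(2L-1)$ are fixed. The bounds \eqref{eq:psipm} on $\psiLpm = \phi^{\pm 1} - \Id$ then follow by standard Gronwall estimates applied to the successive variational equations for $D^\kk \phi^t$, combined with the bounds on $|V_L|_\kk$.

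The key step is the cancellation \eqref{eq:RemL}, a $1/L$ gain beyond the naive bound. It comes from the time-reversal symmetry $\phi^{-t} = (\phi^t)^{-1}$ of the autonomous flow: the map $t \mapsto \phi^t(x)$ is smooth in $t$ with Taylor coefficients at $0$ given by iterated derivatives of $V_L$, so evaluating at $t = \pm 1$ and summing kills all odd-order contributions, yielding
\[\psiLp(x) + \psiLm(x) = (DV_L)(x)\, V_L(x) + \mathcal{R}(x),\]
where $\mathcal{R}$ collects only even-order Taylor terms of order $\geq 4$. The leading term $(DV_L) V_L$ has $\kk$-th derivative of size $O(L^{-\kk-1})$ by Leibniz and the bounds on $V_L$, and each higher even-order term is smaller still by dimensional counting in powers of $1/L$. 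This yields \eqref{eq:RemL}. The main obstacle is precisely identifying and exploiting this odd-order cancellation: separate bounds on $\psiLp$ and $\psiLm$ would only give $O(L^{-\kk})$ for their sum.
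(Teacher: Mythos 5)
Your proposal follows essentially the same approach as the paper: both construct $\TLpm$ as the time-$(\pm 1)$ maps of the flow of a divergence-free (Hamiltonian) vector field, so area-preservation and inverse-ness are automatic. The only cosmetic difference is the choice of stream function --- the paper uses a product $H(x_1,x_2)=h_1(x_1)h_2(x_2)$ rescaled by $L^{-1}$, while you use a radial cutoff times a linear function --- but the resulting vector field has the same scaling profile $|V_L|_\0 = O(1)$, $|V_L|_\kk = O(L^{-\kk})$. Your derivation of the bounds also matches the paper in spirit: the paper obtains \eqref{eq:psipm} and \eqref{eq:RemL} by differentiating the flow with respect to time and using the ODE, before setting $t=\pm 1$; your Gronwall-on-variational-equations plus Taylor-in-$t$ argument is the same mechanism. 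In fact your explicit identification of the odd-order cancellation in the Taylor expansion at $t=\pm 1$ --- which kills the $O(1)$ linear term and leaves the leading contribution $(DV_L)V_L = O(L^{-1})$ --- is a cleaner explanation of where the extra $L^{-1}$ gain in \eqref{eq:RemL} comes from than the paper's terse remark that "the other bounds are obtained similarly."

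One point worth tightening if you flesh this out: the Taylor expansion in $t$ does not converge (the vector field is only $C^3$, not analytic), so you must truncate at finite order and control the remainder. Since you need $|\Rem_L|_\kk$ for $\kk=0,1,2$ and $V_L$ has bounded derivatives only up to order $3$, you should expand to order $t^2$ (giving the leading even term $(DV_L)V_L$) and bound the integral remainder of order $t^3$ using $C^3$-regularity of $V_L$, rather than appealing to "each higher even-order term"; the order-$\geq 4$ Taylor coefficients would involve derivatives of $V_L$ beyond what you control. This is a bookkeeping matter, not a conceptual gap --- the paper's ODE-differentiation route quietly handles it --- but the proof as written would not close without this care.
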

The last identity \eqref{eq:RemL} expresses the fact that the localized translations $\TLp$ and $\TLm$ are almost exactly going “in opposite directions” (note that for $|x| \leq L$ this is the case, but between $|x| = L$ and $|x| = 2L$ they might not be perfectly aligned). Of course, one can easily construct two localized translations such that $\TLp$ and $\TLm$ are exactly going in opposite directions, but one of the two might then not be volume-preserving...

It will be useful to interpret the second bound in \eqref{eq:RemL} as the fact that $\Rem_L$ satisfies $|\Rem_L|_{\kk}  \leq \Cc L^{-1} \times L^{-\kk}$ with a “prefactor” of size $L^{-1}$.

\begin{proof}[Proof of Lemma \ref{lem:loc_trans}]
Without loss of generality, assume that $\vu = (0,1)$. We use the construction of \cite[Section 3.1]{MR1886241}, the starting point is to consider an Hamiltonian flow: let $h_1, h_2$ be two auxiliary functions in $C^{\infty}(\R)$ such that:
\begin{equation*}
\begin{cases}
h_1(s) = h_2'(s) = 1 & |s| \leq 1 \\
h_1(s) = h_2'(s) = 0 & |s| \geq 2,
\end{cases}
\end{equation*}
and to let $H : \R^2 \to \R^2$ be the “energy function” defined by $H(X) := h_1(x_1) h_2(x_2)$ with $X = (x_1, x_2)$ a point in $\R^2$. Let also $\vv$ be the associated Hamiltonian vector field, namely:
\begin{equation*}
\vv(X) = \nabla^{\perp} H(X) = (h_1(x_1)h'_2(x_2), - h'_1(x_1)h_2(x_2)).
\end{equation*}
For $L \geq 1$, let $\vv_L$ be the rescaled vector field 
\begin{equation*}
\vv_L(X) := \vv(L^{-1}X), \ X \in \R^2.
\end{equation*} 
It is observed in \cite{MR1886241} that the ODE $\dot{X} = \vv_L(X)$ defines a global flow $t \mapsto \tau_L(t)$. By Liouville's theorem, for all $t$ the map $\tau_L(t) : \R^2 \to \R^2$ is area-preserving. It realizes a localized translation by $t \vv$. We take:
\begin{equation*}
\TLp = \tau_L(1), \quad \TLm = \tau_L(-1),
\end{equation*}  
and since $\tau$ is a flow we have indeed $\TLp \circ \TLm = \TLm \circ \TLp = \Id$.

The controls on $\psiLpm$ and their derivatives follow from elementary but cumbersome computations, relying on \cite[Eq. 11]{MR1886241}, which is a simple scaling estimate:
\begin{equation}
\label{scalingvv}
|\vv_L(x) - \vv_L(y)| \leq \Cc L^{-1} |x-y|,
\end{equation}
for some constant $\Cc$ depending only on the choice of the auxiliary function $H$, and not on $L$. 

For example, write $\tau_L(t, \cdot)$ as $\Id + \psi_L(t, \cdot)$ and use the definition of the flow to compute (for $x, y$ in $\R^2$):
\begin{multline*}
\frac{\dd}{\dd t} |\psi_L(t,x) - \psi_L(t,y)|^2 = 2 \left\langle \psi_L(t,x) - \psi_L(t,y), \vv_L(\tau_L(t,x)) - \vv_L(\tau_L(t,y)) \right\rangle \\
\leq 2 \Cc L^{-1} |\psi_L(t,x) - \psi_L(t,y)| |x-y|,
\end{multline*}
which, since $\frac{\dd}{\dd t} |\psi_L(t,x) - \psi_L(t,y)|^2$ is also given by: 
\begin{equation*}
2 \frac{\dd}{\dd t} |\psi_L(t,x) - \psi_L(t,y)| \times |\psi_L(t,x) - \psi_L(t,y)|,
\end{equation*} 
yields:
\begin{equation*}
\frac{\dd}{\dd t} |\psi_L(t,x) - \psi_L(t,y)| \leq \Cc L^{-1} |x-y|.
\end{equation*}
In particular, we get $|\psi_L(t, \cdot)|_{\1} \leq \Cc L^{-1} |t|$ and taking $t = \pm 1$ yields $|\psiLpm|_\1 = \O(L^{-1})$. 

The other bounds in \eqref{eq:psipm} and the ones in \eqref{eq:RemL} are obtained similarly by deriving the relevant quantities with respect to time and using the ODE, before setting $t = \pm 1$.
\end{proof}

\subsection{Effect of the localized translations on the energy}
\newcommand{\Diff}{\mathsf{Diff}}
If $\Phi$ is a map $\Phi : \R^2 \to \R^2$ we use the notation $\Phi \cdot \bX$ to denote the push-forward of a measure (here a point configuration) $\bX$ by $\Phi$.

Fix $L$ and let $\La = \DD(10L)$. For $\bX$ in $\Conf$, define (with $\FLa$ as in \eqref{def:FLa}):
\begin{equation}
\label{eq:Diff_1}
\Diff^1_{\La}(\bX) := \hal \left( \FLa(\TLp \cdot \bX) + \FLa(\TLm \cdot \bX) \right) - \FLa(\bX),
\end{equation}
and for $\bX'$ in $\Conf(\La)$ such that $\Points(\bX', \La) = \Points(\bX, \La)$ let:
\begin{equation}
\label{eq:Diff_2}
\Diff^2_{\La}(\bX', \bX) := \hal \left( \MLa(\TLp \cdot \bX', \bX) + \MLa(\TLm \cdot \bX', \bX) \right) - \MLa(\bX', \bX).
\end{equation}
The next lemmas shows that $\Diff^1, \Diff^2$ are typically bounded \emph{uniformly with respect to} the size of $\La$. Let us emphasize that in contrast to the strategy used in \cite{frohlich1981absence,MR1886241} the effect of localized translations is here controlled in a probabilistic (and not deterministic) way.

\begin{lemma}
\label{lem:controlDiff1}
We have:
\begin{equation}
\label{eq:controlDiff1}
\log \E_{\PI} \left[ \exp\left( 2 \beta \left|\Diff^1_L(\bX)\right| \right) \right] \leq \Cc_\beta.
\end{equation}
\end{lemma}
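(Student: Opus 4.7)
The idea is to express $\Diff^1_L(\bX)$ as a bilinear functional of $\bX - \Leb$ with a small kernel $K_L$, apply the bilinear fluctuation bound Lemma~\ref{lem:bilinear} at finite volume (after a careful near/far-diagonal decomposition), and transfer the exponential-moment bound to $\PI$ via Lemma~\ref{lem:upgrading}.

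Since $\TLp, \TLm$ are area-preserving diffeomorphisms of $\R^2$ that equal the identity outside $\DD(2L) \subset \La = \DD(10L)$, the change of variables $u \mapsto \TLpm(u)$ in \eqref{def:FLa} yields
\[
\FLa(\TLpm \cdot \bX) = \hal \iint_{\La \times \La,\, x \ne y} -\log|\TLpm(x) - \TLpm(y)|\, d(\bX-\Leb)(x)\, d(\bX-\Leb)(y),
\]
and hence $\Diff^1_L(\bX) = \hal \iint K_L(x,y)\, d(\bX-\Leb)(x)\, d(\bX-\Leb)(y)$ with
\[
K_L(x,y) := \log|x-y| - \hal \log|\TLp(x) - \TLp(y)| - \hal \log|\TLm(x) - \TLm(y)|.
\]
Two useful observations: $K_L \equiv 0$ on $\DD(L) \times \DD(L)$ (where both $\TLpm$ act as pure translations by $\pm \vu$) and outside $\DD(2L) \times \DD(2L)$, so the support of $K_L$ is confined to an annular transition zone.

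Setting $d^\pm(x,y) := \psiLpm(x) - \psiLpm(y)$ and $\alpha^\pm := (2\langle x-y, d^\pm\rangle + |d^\pm|^2)/|x-y|^2$, so that $K_L = -\tfrac14 \log[(1+\alpha^+)(1+\alpha^-)]$, the Lipschitz bound $|\psiLpm|_1 \leq \Cc L^{-1}$ from \eqref{eq:psipm} gives $|\alpha^\pm| \leq \Cc L^{-1}$. The crucial cancellation comes from the near-opposite-direction identity $\psiLp + \psiLm = \Rem_L$ of \eqref{eq:RemL}, which yields $|\alpha^+ + \alpha^-|,\ |\alpha^+ \alpha^-| \leq \Cc L^{-2}$ and hence $|K_L|_\infty \leq \Cc L^{-2}$ uniformly. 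Propagating the same cancellations through the derivatives of the Taylor expansion shows that, at macroscopic scale (i.e.\ on $\{|x-y| \geq c L\}$ for a fixed small $c>0$), the rescaled kernel $L^2 K_L$ is smooth with $|L^2 K_L|_{1+1} \leq \Cc L^{-2}$, and therefore fits (up to an absolute constant) into the class $\LLLL$ of Lemma~\ref{lem:bilinear} with $L_1 = L_2 \sim L$.

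Splitting $K_L = K_L^{\mathrm{far}} + K_L^{\mathrm{near}}$ via a smooth cutoff on $\{|x-y| \le cL\}$, Lemma~\ref{lem:bilinear} applied to $L^2 K_L^{\mathrm{far}}$ with $s = \pm \beta L^{-2}$ (which lies in the admissible range $|s| \leq \Cc_\beta^{-1}$ for $L$ large) yields in finite volume $\log \E_{\PpNx}[\exp(\pm 2\beta \iint K_L^{\mathrm{far}}\, d(\bX-\Leb)^2)] \leq \Cc_\beta$, and hence, via $e^{|a|} \leq e^a + e^{-a}$, a uniform exponential moment bound. For the near-diagonal piece, the vanishing of $K_L$ on $\DD(L) \times \DD(L)$ confines its support to a thin tubular neighborhood of the diagonal inside the annular transition zone, where the pointwise bound $|K_L|_\infty \leq \Cc L^{-2}$ combined with the bounded correlation functions (Lemma~\ref{lemma:boundedCorrelInfini}) and the exponential tails of $\Points(\bX, \La)$ (Lemma~\ref{lem:points}) give the same $\Cc_\beta$ control. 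Since $\exp(2\beta|\Diff^1_L|)$ is a local function of $\bX \cap \La$ dominated (for $L$ large) by an exp-tame function, Lemma~\ref{lem:upgrading} transfers these estimates to $\PI$ and gives \eqref{eq:controlDiff1}. The main obstacle is the failure of smoothness of $K_L$ along the diagonal: the ratio $|\TLpm(x)-\TLpm(y)|/|x-y|$ has only directional limits as $y \to x$, producing a bounded but directionally discontinuous jump with derivatives that blow up algebraically; this prevents a naive uniform application of Lemma~\ref{lem:bilinear} and forces the near/far decomposition, which is tractable only thanks to the vanishing of $K_L$ on the interior disk $\DD(L) \times \DD(L)$.
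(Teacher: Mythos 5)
Your reformulation of $\Diff^1_L$ as the bilinear statistic
\[
\Diff^1_L(\bX) = \hal \iint_{\La\times\La,\,x\neq y} K_L(x,y)\,\dd(\bX-\Leb)(x)\,\dd(\bX-\Leb)(y)
\]
is a clean and valid manipulation, and your identification of the crucial cancellation $\psiLp+\psiLm=\Rem_L$ leading to $|K_L|_\infty\leq \Cc L^{-2}$ is correct, as is the estimate $|K_L|_{\1+\1}(x,y)\sim L^{-2}/|x-y|^2$ which makes the far-diagonal piece amenable to Lemma~\ref{lem:bilinear}. This is a genuinely different strategy from the paper's: the paper first works with the \emph{global} energy $\FN$ and its behaviour under transport (Step 1, via the anisotropy/error expansion of \cite{serfaty2020gaussian}), and only then localizes from $\FN$ to $\FLa$ by controlling the cross-term $\La\times(\LN\setminus\La)$ through a dyadic decomposition (Step 2). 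Your route avoids the cross-term entirely, which would be a nice simplification if it could be carried out.

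However, the treatment of the near-diagonal piece is a genuine gap. The support of $K_L^{\mathrm{near}}$, namely $\{(x,y)\in\La\times\La : \max(|x|,|y|)\geq L,\ |x-y|\leq cL\}$, is not a ``thin tubular neighborhood'': it has area of order $c^2L^4$. With only the pointwise bound $|K_L^{\mathrm{near}}|_\infty\leq \Cc L^{-2}$ and the boundedness of $\rho_2$, the best one gets is, for instance, $\E\bigl[\iint |K_L^{\mathrm{near}}|\,\dd\bX\,\dd\bX\bigr]\leq\Cc_\beta L^{-2}\cdot L^4 = \Cc_\beta L^2$, and the background-background integral $\iint K_L^{\mathrm{near}}\,\dd\Leb^2$ is also of order $L^2$ in general. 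So the near-diagonal contribution is not $\O(1)$ by a crude triangle-inequality argument; the cancellation coming from integrating against $\bX-\Leb$ is essential, and that cancellation is exactly what is lost once one discards the derivative information on $K_L$ near the diagonal. (Choosing a fixed cutoff scale $c$ instead of $cL$ does not help: then the near piece has area $\sim L^2$ and is fine, but the far piece only has $|K_L^{\mathrm{far}}|_{\1+\1}\leq\Cc L^{-2}$, so that Lemma~\ref{lem:bilinear} yields a bound of order $L^2$, not $\O(1)$, for $s\sim\beta$.) Controlling the effect of a transport map on the interaction energy near the diagonal is precisely the hard problem solved by the anisotropy/error decomposition of \cite{serfaty2020gaussian}; the paper's Claim~\ref{claim:EffetTransportFini} imports that result as a black box, and it cannot be replaced by bounded correlation functions plus exponential moment bounds on $\Points(\bX,\La)$. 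As it stands, your proof would need either a multi-scale dyadic decomposition in $|x-y|$ together with the anisotropy machinery at the finest scale, or simply the transport estimate itself — at which point one is essentially back to the paper's argument.
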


\begin{lemma}
\label{lem:controlDiff2}
We have:
\begin{equation}
\label{eq:controlDiff2}
\log \E_{\PI} \left[ \exp\left(2 \beta \left|\Diff^2_L(\bX, \bX)\right| \right) \right] \leq \Cc_\beta.
\end{equation}
\end{lemma}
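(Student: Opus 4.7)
Plan. The strategy parallels the proof of Proposition \ref{prop:Zero}: we decompose $\Diff^2_L(\bX, \bX)$ along the dyadic partition of unity $(\chi_i)$ in the variable $y$ and obtain exponential-moment estimates at each scale, then combine them via Hölder's inequality.

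The starting point is a bilinear reformulation. Setting $\ff_x(y) := \hal(-\log|\TLp(x)-y| - \log|\TLm(x)-y|) + \log|x-y|$, the area-preservation of $\TLpm$ together with $\TLpm(\La) = \La$ yields the crucial cancellation $\int_\La \ff_x(y)\,dx = 0$ for every $y \in \R^2$ (by change of variables, since both terms inside $\ff_x$ have the same integral over $\La$). Combined with $\MLa(\bX,\bX) = 0$, this rewrites
\begin{equation*}
\Diff^2_L(\bX,\bX) = \iint \ff_x(y)\,\1_\La(x)\,\1_{\Lac}(y)\, d(\bX-\Leb)(x)\, d(\bX-\Leb)(y),
\end{equation*}
a genuine bilinear fluctuation statistic. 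A Taylor expansion of $\ff_x$ using $|\psiLpm|_\kk \leq \Cc L^{-\kk}$ and $|\psiLp+\psiLm|_\kk = |\Rem_L|_\kk \leq \Cc L^{-\kk-1}$ (Lemma \ref{lem:loc_trans}) gives, for $|x-y|$ large, the pointwise bound $|\ff_x(y)| \leq \Cc/(L|x-y|) + \Cc/|x-y|^2$, with analogous decay for its higher derivatives and mixed derivatives.

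For the scales $i \geq I_L := \lceil \log_2 L\rceil + \Cc$ such that $\supp \chi_i \subset \Lac$, the indicator $\1_{\Lac}$ drops out and the kernel $\Ka_i(x,y) := \ff_x(y)\chi_i(y)\1_\La(x)$ (which equals $\ff_x(y)\chi_i(y)\1_{\DD(2L)}(x)$ since $\ff_x \equiv 0$ for $|x|>2L$) satisfies $|\Ka_i|_{\1+\1} \leq \Cc L^{-1} 8^{-i}$. Applying Lemma \ref{lem:bilinear} with $L_1 = 2L$ and $L_2 = 2^{i+2}$, after rescaling $\Ka_i$ by the factor $\sim 4^i$ needed to enter the hypothesis, gives
\begin{equation*}
\log \E_{\PI}[\exp(tA_i)] \leq |t|\,\Cc_\beta L/2^i,\qquad A_i := \iint \Ka_i\,d(\bX-\Leb)^2,
\end{equation*}
for all $|t|$ up to a threshold polynomially large in $L\cdot 2^i$. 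Since $\sum_{i \geq I_L} L/2^i \leq \Cc$, a Hölder inequality with geometric weights $p_i := 2^{i-I_L+1}$ (so that $\sum 1/p_i = 1$) combines these scale-wise bounds into the uniform estimate $\log \E_{\PI}\!\left[\exp\bigl(2\beta \sum_{i\geq I_L} |A_i|\bigr)\right] \leq \Cc_\beta$.

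The main obstacle is the $\O(1)$ boundary layer, i.e.\ the indices $i$ for which $\supp\chi_i$ crosses $\partial \La$: the jump of $\1_{\Lac}$ across $\partial\La$ prevents a direct application of the bilinear lemma. I would handle it by the same splitting as the "last layer" treatment in the finite-volume proof of Proposition \ref{prop:Zero}: write $\1_{\Lac} = \zeta + (\1_{\Lac} - \zeta)$ where $\zeta$ is a smooth radial cutoff of width $L$, vanishing on $\DD(9.5L)$ and identically $1$ outside $\DD(10.5L)$ with $|\zeta|_\kk \leq \Cc L^{-\kk}$. The smooth contribution then produces a kernel satisfying $|\nabxy \Ka|_{\1+\1} \leq \Cc L^{-4}$, matching $(L_1 L_2)^{-1} \sim L^{-2}$ up to a factor $L^2$, and Lemma \ref{lem:bilinear} with $L_1, L_2 \sim L$ gives a uniformly bounded exponential moment. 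The residual piece $(\1_{\Lac}-\zeta)\chi_i$, supported on the thin radial annulus carrying the genuine jump of $\1_{\Lac}$ at $\partial\La$, is controlled by exploiting the radial symmetry of $\La = \DD(10L)$ through an infinite-volume analogue of Lemma \ref{lem:SmoothHardWall}, obtained by passing the radial hard-edge hyperuniformity estimate of \cite{leble2021two} to the limit via the local topology of Section \ref{sec:prelimINF}. Establishing this radial hard-edge estimate for $\PI$ is the principal technical difficulty of the proof.
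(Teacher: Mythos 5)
Your proposal correctly identifies the bilinear reformulation, the change-of-variables cancellation in $\Lambda$, and the need for a dyadic decomposition over $\chi_i$, and correctly notices the structural parallel with the proof of Claim~\ref{claim:FiniteSize2}. But the core estimate fails, and the route you take around the boundary layer is not the one the paper uses (nor needed).

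The critical error is the mixed-derivative bound $|\Ka_i|_{\1+\1} \leq \Cc L^{-1} 8^{-i}$. Writing $\ff_x(y) = -\tfrac12\langle\Rem_L(x),\nabla\log|x-y|\rangle + \O(|\psiLpm(x)|^2/|x-y|^2)$, the \emph{first} term has mixed derivative dominated by $\partial_x\Rem_L(x)\cdot\partial_y\nabla\log|x-y|\sim L^{-2}\cdot 2^{-2i}$ — the $x$-derivative falls on $\Rem_L$ and gains a factor $L^{-1}$, not a factor $|x-y|^{-1}$. Since $L^{-2}2^{-2i} \gg L^{-1}2^{-3i}$ precisely on the range $i\geq\log_2 L$, the kernel's true $|\cdot|_{\1+\1}$ is $\sim L^{-2}2^{-2i}$, which matches the paper's $\kappa_{i,1}$. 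Plugging this into Lemma~\ref{lem:bilinear} (with $L_1=L$, $L_2=2^i$, so $c_i(L_1L_2)^2\sim 1$) gives $\log\E[\exp(tA_i)]\leq\Cc|t|$ \emph{uniformly in $i$}, with no decay — and then there is no choice of geometric weights $p_i$ for which Hölder over infinitely many scales gives a finite total. The bilinear lemma, which bounds the integral by $|\Ka|_{\1+\1}\cdot\EnerPts(\Omega_1)\cdot\EnerPts(\Omega_2)$, is simply lossy by a factor $2^{-i}$ for this piece of the kernel. This is exactly why the paper splits $\kappa_i=\kappa_{i,1}+\kappa_{i,2}$ and treats $\kappa_{i,1}$ \emph{not} via the bilinear lemma but by factorizing $\iint\kappa_{i,1}=\tfrac12\langle\Fluct[\Rem_L],\Fluct[J_i]\rangle$, then exploiting the \emph{smooth} fluctuation control (Lemma~\ref{lem:fluct_smooth}) which gives exponential concentration of $\Fluct[J_i]$ at scale $2^{-i}$; combined with an event split on $|\Fluct[\Rem_L]|$ and a careful choice of Hölder weights $a_i=\Cc 2^{i/10}L^{-1/10}$, that is what produces a summable bound. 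Your kernel $\kappa_{i,2}$ bound $L^{-1}2^{-3i}$ is correct for that piece, and for that piece the bilinear lemma does yield a summable $L 2^{-i}$ as you say; the problem is isolating it from $\kappa_{i,1}$.

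The second deviation is your treatment of the boundary layer. In the infinite-volume lemma there is no $\partial\LN$, only the cut $\1_{\bLa}$ along $\partial\DD(10L)$, which is at a scale comparable with $L$. The paper handles this single layer by a rough pointwise bound (supremum over $y\in\supp\chi_i$ of $\Fluct[\kappa_{i,2}(\cdot,y)]$ times a $\Points$ factor) followed by Cauchy--Schwarz and the Lipschitz/Points exponential moments, and by applying Lemma~\ref{lem:fluct_Lipschitz} \emph{twice} to the factorized $\kappa_{i,1}$; no hard-edge estimate appears. Invoking Lemma~\ref{lem:SmoothHardWall} and the hyperuniformity of \cite{leble2021two} is an import from the finite-$N$ proof of Proposition~\ref{prop:Zero}(B), where the genuine hard edge $\partial\LN$ does occur; it is not needed here, and the "principal technical difficulty" you flag is a non-issue once you follow the paper's softer boundary-layer estimate.
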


We start with the proof of Lemma \ref{lem:controlDiff1}.
\begin{proof}[Proof of Lemma \ref{lem:controlDiff1}]
The effect of a transport on the energy has been studied in details, but only for finite volume 2DOCP's. We must thus go through a finite-volume approximation. 

\emph{For simplicity, we will assume here that $\PI$ has been obtained as a limit point of $\Pp_{N, \mathsf{0}}$ - the local point process seen from the origin. The general case goes along the same lines, after translating each finite-$N$ quantity by $\x = \x(N)$. We only use below the finite volume results of Section \ref{sec:ResFinite}, which are valid everywhere in the bulk of $\LN$, see condition \eqref{condi:LL}.}

\paragraph{Step 1. Effect of a transport map on the energy - finite-volume.} Analyzing the effect of a transport map (thought of as a small perturbation of the identity map) on the Coulomb energy is a central and subtle task which was undertaken in \cite{leble2018fluctuations,MR4063572} and refined in \cite{serfaty2020gaussian}. We gather here some consequences of \cite[Prop.~4.2]{serfaty2020gaussian} combined with the local laws of \cite{Armstrong_2021}.

Let $\psi$ be a vector field on $\R^2$ and let $\Phi := \Id + \psi$. Assume that $\psi$ is $C^2$ and compactly supported in $\DD(x, L)$ where $(x, L)$ satisfies \eqref{condi:LL}. Assume that $|\psi|_\kk \leq \Cc_\psi L^{-\kk}$ for $\kk = 0, 1, 2$. If $L$ is large enough compared to $\Cc_\psi$, we have:
\begin{equation}
\label{eq:comparaisonFLa}
\FN(\Phi \cdot \bX_N) = \FN(\bX_N) + \Ani[\psi, \bXN] + \Error[\psi, \bXN],
\end{equation}
where $\Ani$ is the “anisotropy” or “angle” term which is linear in $\psi$ and can be thought of as a first-order term in the energy expansion, while $\Error$ is a second-order term\footnote{We will not go here into the definition of $\Ani$, and we use the results of \cite{serfaty2020gaussian} as a black box. We refer to the appendix of \cite{lambert2024law} for a “pedagogical” presentation.}. We have (if $|\psi|_\1, |\psi|_\2$ are small enough depending on $\beta$):
\begin{equation}
\label{eq:AniErrorExpMom}
\log \EN\left[  \exp\left( \beta |\Ani[\psi, \bXN]| \right) \right] \leq \Cc \times \Cc_\Psi \times L, \quad \log \EN\left[  \exp\left( \beta |\Error[\psi, \bXN]| \right) \right] \leq \Cc \times \Cc_\psi \times L^{-2} \log L.
\end{equation}
Using those results, we obtain:
\begin{claim}
\label{claim:EffetTransportFini}
We have:
\begin{equation}
\label{eq:effetTransportFini}
\log  \E_{\PNbeta}\left[ \exp\left( \beta \left| \FN(\bXN) - \hal \left(\FN(\TLp \cdot \bXN) + \FN(\TLm \cdot \bXN)\right)  \right|  \right)  \right] \leq \Cc_\beta.
\end{equation}
\end{claim}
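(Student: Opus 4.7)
The plan is to apply the energy-expansion formula \eqref{eq:comparaisonFLa} to each of the two transport maps $\TLp$ and $\TLm$ separately, then exploit the \emph{linearity} of $\Ani$ in its first argument together with the near-cancellation $\psiLp + \psiLm = \Rem_L$ given by Lemma~\ref{lem:loc_trans}. The whole point of averaging the $\pm$ directions is that if one tried to bound $|\FN(\TLp \cdot \bXN) - \FN(\bXN)|$ in isolation, the anisotropy would contribute an exponential-moment bound of size $\Cc L$ via \eqref{eq:AniErrorExpMom}, which blows up as $L \to \infty$. The symmetric average is precisely what kills this first-order contribution.

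Concretely, for $N$ large enough (depending on $L$) that $(\mathsf{0}, 2L)$ satisfies \eqref{condi:LL}, the hypotheses of \eqref{eq:comparaisonFLa} are met by both $\psi = \psiLp$ and $\psi = \psiLm$ with a universal constant $\Cc_\psi$ (by \eqref{eq:psipm}). So for $\bullet \in \{+,-\}$,
\begin{equation*}
\FN(\mathsf{T}^\bullet_L \cdot \bXN) - \FN(\bXN) = \Ani[\psi_L^\bullet, \bXN] + \Error[\psi_L^\bullet, \bXN].
\end{equation*}
Averaging and using linearity of $\Ani[\cdot, \bXN]$,
\begin{equation*}
\hal\bigl(\FN(\TLp \cdot \bXN) + \FN(\TLm \cdot \bXN)\bigr) - \FN(\bXN)
= \hal \Ani[\Rem_L, \bXN] + \hal\bigl(\Error[\psiLp, \bXN] + \Error[\psiLm, \bXN]\bigr).
\end{equation*}
By \eqref{eq:RemL}, the remainder $\Rem_L$ satisfies $|\Rem_L|_\kk \leq \Cc L^{-1} \cdot L^{-\kk}$, i.e.\ it meets the hypothesis of \eqref{eq:AniErrorExpMom} with an effective constant $\Cc_\psi = \Cc L^{-1}$; this yields $\log \EN[\exp(c\beta |\Ani[\Rem_L, \bXN]|)] \leq \Cc \cdot L^{-1} \cdot L = \Cc_\beta$, uniformly in $L$, for any fixed $c$ in a bounded range. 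The two error terms are each directly bounded by $\Cc L^{-2} \log L = o_L(1)$ via the second half of \eqref{eq:AniErrorExpMom}.

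To conclude, I would use the triangle inequality to dominate the quantity inside the exponential in \eqref{eq:effetTransportFini} by $\hal|\Ani[\Rem_L, \bXN]| + \hal|\Error[\psiLp, \bXN]| + \hal|\Error[\psiLm, \bXN]|$, and then combine the three exponential moment bounds via Hölder's inequality with three equal exponents (paying a harmless factor of $3$ inside the exponent, which is absorbed into the constant $\Cc_\beta$). The main conceptual input is recognizing that linearity of $\Ani$ in $\psi$ turns the near-cancellation $\psiLp + \psiLm = \Rem_L$ into an exact cancellation at first order; the only technical point is checking that the smallness conditions on $|\psi|_\1, |\psi|_\2$ in \eqref{eq:AniErrorExpMom} are met uniformly for $\psiLp$, $\psiLm$, and $\Rem_L$ as $L \to \infty$, which is immediate from \eqref{eq:psipm} and \eqref{eq:RemL}. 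The rest is plugging quoted bounds into Hölder.
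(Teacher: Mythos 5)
Your proposal is correct and follows exactly the paper's argument: apply the expansion \eqref{eq:comparaisonFLa} to $\TLp$ and $\TLm$, average, use the linearity of $\Ani$ to replace $\Ani[\psiLp]+\Ani[\psiLm]$ by $\Ani[\Rem_L]$, and then invoke \eqref{eq:AniErrorExpMom} with the scaling constants from \eqref{eq:psipm} and \eqref{eq:RemL}. You merely make explicit two steps the paper leaves implicit (the H\"older combination of the three exponential moments, and the observation that $\Rem_L$'s effective prefactor $L^{-1}$ exactly cancels the factor $L$ in the $\Ani$ bound), which is a good sanity check but not a deviation.
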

\newcommand{\Rt}{\mathrm{R}_t}
\newcommand{\Rmt}{\mathrm{R}_{-t}}
\newcommand{\psimt}{\psi_{-t}}
\begin{proof}[Proof of Claim \ref{claim:EffetTransportFini}]
Write $\TLpm = \Id + \psiLpm$ as in Lemma \ref{lem:loc_trans}. Applying \eqref{eq:comparaisonFLa}, we get:
\begin{equation*}
\FN(\TLpm \cdot \bX) = \FN(\bX) + \Ani[\psiLpm, \bX] + \Error[\psiLpm, \bX].
\end{equation*}
Next, we combine the two expansions and use the linearity of $\Ani$:
\begin{multline*}
 \hal \left(\FN(\TLp \cdot \bXN) + \FN(\TLm \cdot \bXN)\right) - \FN(\bXN)  = \hal \left(\Ani[\psiLp + \psiLm, \bX] + \Error[\psiLp, \bX] + \Error[\psiLm, \bX]\right) \\
=  \hal \Ani\left[\Rem_L, \bX\right] + \hal \left(\Error[\psiLp, \bX] + \Error[\psiLm, \bX]\right),
\end{multline*}
with $\Rem_L$ as in \eqref{eq:RemL}. Using the control on exponential moments on $\Ani, \Error$ from \eqref{eq:AniErrorExpMom} and the bounds \eqref{eq:psipm}, \eqref{eq:RemL} on $\psiLpm$ and $\Rem_L$, we obtain \eqref{eq:effetTransportFini}.
\end{proof}

This gives us a control on the exponential moments of $\Diff^1$, but only when computed with respect to the interaction energy $\FN$ in the whole of $\LN$ - although the maps $\TLpm$ have an effect only at scale $L$. We now “localize” this estimate, which turns out to be a bit cumbersome, but will in fact prepare us for the proof of Lemma \ref{lem:controlDiff2}.

\newcommand{\bXNp}{\bXN^+}
\newcommand{\bXNm}{\bXN^-}
\newcommand{\Err}{\mathrm{Err}}

\paragraph{Step 2. Localization from $\FN$ to $\FLa$.}
A straightforward manipulation of the quadratic quantity defining the interaction energy yields:
\begin{multline}
\label{FiniteSize}
\FN(\bXN) - \hal \left(\FN(\TLp \cdot \bXN) + \FN(\TLm \cdot \bXN)\right) = \FLa(\bXN) - \hal \left(\FLa(\TLp \cdot \bXN) + \FLa(\TLm \cdot \bXN)\right) \\
+ \iint_{\La \times (\LN \setminus \La)} -\log|x-y| \dd \left(\bXN - \hal \left(\TLp \cdot \bXN + \TLm \cdot \bXN\right)\right)(x) \dd (\bXN - \Leb)(y).
\end{multline}
We now want to prove that the second line typically contributes to $\O(1)$, which will allow to transfer the information on the left-hand side (as stated in Claim \ref{claim:EffetTransportFini}) to the first line in the right-hand side, which is $\Diff^1_\La(\bXN)$. 

First, we make the following observation: by definition of a push-forward, we have
\begin{multline}
\label{Push}
\iint_{\La \times (\LN \setminus \La)} -\log|x-y| \dd \left(\bXN - \hal \left(\TLp \cdot \bXN + \TLm \cdot \bXN\right)\right)(x) \dd (\bXN - \Leb)(y) \\ \hspace{-0.2cm} = 
 \iint_{\La \times (\LN \setminus \La)} \left(- \log|x-y| - \hal \left(- \log|\TLp(x) -y | -  \log|\TLm(x) -y |\right)\right)  \dd \bXN(x) \dd (\bXN - \Leb)(y).
\end{multline}
Moreover, by construction, $\TLp$ and $\TLm$ are two area-preserving diffeomorphisms which coincide with the identity outside the disk $\DD(2L) \subset \La$, and hence we have, for any fixed $y$:
\begin{equation*}
\int_{\La} \left(- \log|x-y| - \hal \left(- \log|\TLp(x) -y | -  \log|\TLm(x) -y |\right)\right) \dd \Leb(x) = 0.
\end{equation*}
We may thus artificially introduce the background measure in the first integral (over $x \in \La$) of \eqref{Push} and re-write the second line of \eqref{FiniteSize}, \eqref{Push} as:
\begin{equation*}
\iint_{\La \times (\LN \setminus \La)} \left(- \log|x-y| - \hal \left(- \log|\TLp(x) -y| -  \log|\TLm(x) -y |\right)\right)  \dd \left(\bXN - \Leb\right)(x) \dd (\bXN - \Leb)(y).
\end{equation*}
Next, we use again the dyadic decomposition of space with respect to the $y$ variable: for $x,y$ in $\R^2$ and $i \geq 1$, define $\kappa_{i}(x,y)$ as:
\begin{equation}
\label{def:ki}
\kappa_{i}(x,y) \mapsto \left(- \log|x-y| - \hal \left(- \log|\TLp(x) -y | -  \log|\TLm(x) -y |\right)\right)    \chi_i(y).
\end{equation}
The second line of \eqref{FiniteSize}, \eqref{Push} can then be re-written as:
\begin{equation}
\label{dyadicki}
\sum_{i=1}^{+ \infty} \iint_{\LN \times \LN} \kappa_{i}(x,y)   \1_{\bLa}(y)  \dd \left(\bXN - \Leb\right)(x)  \dd (\bXN - \Leb)(y),
\end{equation} 
and our goal is now to prove the following, which in view of \eqref{FiniteSize} and combined with Claim \ref{claim:EffetTransportFini} would yield a finite-volume version of \eqref{eq:controlDiff1}:
\begin{claim}
\label{claim:FiniteSize2}
We have:
\begin{equation}
\label{eqfinitesize2}
\log  \E_{\PNbeta}\left[ \exp\left( 2 \beta \sum_{i=1}^{+ \infty} \left| \iint_{\LN \times \LN} \kappa_{i}(x,y) \1_{\bLa}(y)   \dd \left(\bXN - \Leb\right)(x)  \dd (\bXN - \Leb)(y) \right| \right) \right] \leq \Cc_\beta.
\end{equation}
\end{claim}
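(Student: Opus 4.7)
The aim is to extract from
\[
g(x,y) := -\log|x-y| + \tfrac{1}{2}\log|\TLp(x)-y| + \tfrac{1}{2}\log|\TLm(x)-y|
\]
the cancellation inherited from the identity $\psiLp + \psiLm = \Rem_L$ with $|\Rem_L|_\kk \leq \Cc L^{-\kk-1}$ provided by Lemma \ref{lem:loc_trans}. Taylor-expanding in the first argument yields
\[
g(x,y) = -\tfrac{1}{2}\langle \Rem_L(x), \nabla_x u(x,y)\rangle + Q(x,y), \qquad u(x,y) := -\log|x-y|,
\]
with $|Q(x,y)| \leq \Cc\, 2^{-2i}$ uniformly for $x \in \DD(2L)$ and $y \in \supp \chi_i$. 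The factor $|\Rem_L|_0 \leq \Cc L^{-1}$ in the leading term, absent in $Q$, is the source of the extra decay that will make the dyadic sum summable in $i$.

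For $i \geq i_0 := \lceil \log_2(20L)\rceil$ one has $|x| \leq 2L \leq \tfrac{1}{5}|y|$ on $\supp \chi_i$, so further Taylor-expand $\nabla_x u(x,y)$ around $x = 0$ to obtain a geometrically convergent separable expansion. Each main term contributes
\[
\iint \Rem_L^k(x)\, x^\alpha\, \phi_i^{m,k,\alpha}(y) \dd (\bXN - \Leb)(x) \dd (\bXN - \Leb)(y) = \Fluct[\Rem_L^k \cdot x^\alpha](\bXN) \cdot \Fluct[\phi_i^{m,k,\alpha}](\bXN),
\]
where $|\alpha| = m$ and $\phi_i^{m,k,\alpha}$ collects entries of $(\partial_x^{m+1} u)(0,y)\chi_i(y)$. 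The $x$-factor is smooth at scale $L$ with $|\cdot|_\kk \leq \Cc L^{m-1-\kk}$, and the $y$-factor is smooth at scale $2^i$ with $|\cdot|_\kk \leq \Cc 2^{-i(m+1+\kk)}$. Applying Lemma \ref{lem:fluct_smooth} to each factor and multiplying gives expected product size $\leq \Ccb L^{m-1} \cdot 2^{-i(m+1)} = \Ccb L^{-1} (L/2^i)^m \cdot 2^{-i}$, summable in $m$ (geometric, since $L/2^i \leq 1/5$) and in $i$ (from $i_0 \sim \log_2 L$), with total $\sum_{i \geq i_0} \E_{\PNbeta}|Y_i^{\mathrm{main}}| \leq \Ccb L^{-2}$. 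The quadratic $Q$, the $x$-Taylor remainder after truncation at some fixed order $M$, and the $O(1)$-many boundary indices $i \in [\log_2(5L), i_0]$ --- where $|x|/|y|$ is not yet small and where $\chi_i$ straddles $\partial \bLa$, handled via Lemma \ref{lem:fluct_Lipschitz} to absorb the $\1_{\bLa}$ discontinuity --- contribute strictly smaller terms of the same form.

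The main obstacle is upgrading these $L^1$ estimates to the \emph{exponential} moment bound \eqref{eqfinitesize2} uniformly in $N$ and $L$. Applying Lemma \ref{lem:bilinear} directly to $\kappa_i$ yields a sub-exponential rate that is \emph{uniform} in $i$, with no dyadic decay, so the required summability must genuinely come from the separable structure above. The plan is to combine the sub-Gaussian tails of Lemma \ref{lem:fluct_smooth} for each factor --- sharpened via the polarization $|\Fluct[a]\Fluct[b]| \leq \tfrac{1}{2}(\lambda^{-1}\Fluct[a]^2 + \lambda \Fluct[b]^2)$ with $\lambda$ matching the ratio of scales of $a$ and $b$, and exponential moments of $\Fluct^2$ coming from Lemma \ref{lem:fluct_Lipschitz} --- with a weighted Hölder inequality
\[
\E_{\PNbeta}\!\prod_{i \geq i_0} e^{2\beta |Y_i|} \leq \prod_{i \geq i_0} \left(\E_{\PNbeta} e^{2\beta p_i |Y_i|}\right)^{1/p_i}
\]
in which weights $p_i \propto 2^{i-i_0}$ satisfy $\sum 1/p_i < \infty$ while each per-$i$ exponential moment on the right-hand side, bounded via the above by a summable sequence of order $L^{-1} 2^{-i}$, keeps the product uniformly controlled in $N$ and $L$.
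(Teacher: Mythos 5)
Your outline correctly identifies the key structural ingredient: after Taylor-expanding around $\psi^\pm = 0$ and $x = 0$, the cancellation $\psiLp + \psiLm = \Rem_L$ with $|\Rem_L|_\kk \lesssim L^{-\kk-1}$ produces a separable leading term $\langle \Rem_L(x), J_i(y)\rangle$, whose expected size $\sim L^{-1}2^{-i}$ gives the needed dyadic decay. This matches the paper's decomposition $\kappa_i = \kappa_{i,1} + \kappa_{i,2}$. However, your summation argument for the exponential moment has a genuine gap.

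\textbf{The gap.} You propose a weighted H\"older inequality with exponents $p_i \propto 2^{i-i_0}$ and claim the per-$i$ exponential moment is ``bounded via the above by a summable sequence of order $L^{-1}2^{-i}$.'' This claim is not achievable with the tools you cite. Write $p_i Y_i = \frac{p_i}{L 2^i}\Fluct[L\Rem_L]\Fluct[2^iJ_i]$, with both factors normalized. Applying the polarization inequality together with the exponential moment of $\Fluct^2$ from Lemma \ref{lem:fluct_Lipschitz} (which is the only source of exp-moment bounds on \emph{squares} of fluctuations) and optimizing the Young parameter gives, after Jensen,
\begin{equation*}
\log \EN\bigl[e^{p_i Y_i}\bigr] \;\lesssim\; \min_\mu\Bigl(\mu + \frac{2^{2i}}{L^2\mu}\Bigr) \;\sim\; \frac{2^i}{L},
\end{equation*}
so that $p_i^{-1}\log\EN[e^{p_iY_i}] \sim \frac{2^i/L}{2^i/(20L)} = 20$, a constant. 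Summing over the $\Theta(\log N)$ dyadic layers gives a bound of order $\log N$, which diverges. The sub-exponential heuristic for the product $\Fluct[\Rem_L]\cdot\Fluct[J_i]$ would indeed give $\log\EN[e^{p_iY_i}] \lesssim L^{-2}$ uniformly, which would be summable --- but the square-moment bounds of Lemma \ref{lem:fluct_Lipschitz} are too lossy to deliver it.

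\textbf{What the paper does instead.} The paper's treatment (which is the hard part of this claim) uses Jensen with much more slowly growing weights $a_i \propto 2^{i/10}L^{-1/10}$ (so $\sum 1/a_i = 1$), and establishes $\EN[e^{C\beta a_i|Y_i|}] \leq C_\beta$ \emph{uniformly in $i$} via a case split on $|\Fluct[\Rem_L]| \lessgtr 2^i a_i^{-1}$. On the ``small'' event, $a_i|\Fluct[\Rem_L]||\Fluct[J_i]| \leq |\Fluct[2^iJ_i]|$, a single smooth statistic whose exponential moment is $O(1)$ by Lemma \ref{lem:fluct_smooth} --- crucially avoiding any exp-moment of a \emph{product}. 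On the ``large'' event, the probability is $\lesssim e^{-2^{2i}a_i^{-2}/C_\beta}$ by Markov, and the crucial balance is that this decay rate $2^{2i}a_i^{-2} = 2^{9i/5}L^{1/5}$ grows with $i$ at the \emph{same} rate as the exp-moment estimate $\frac{2^{2i}}{a_i^2}$ from the Young/Lipschitz bound; a tunable constant $M$ lets the decay win. With your $p_i \propto 2^i/L$, the threshold $2^i p_i^{-1} = 20L$ is \emph{constant} in $i$, so the probability decay does not scale up, while the exp-moment on the complement still grows like $\frac{2^{2i}}{L^4}$; the balance fails for $i \gtrsim 3\log_2 L$. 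The $1/10$ exponent in $a_i$ is not cosmetic: it is what keeps the threshold growing while keeping $\sum 1/a_i = 1$.

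\textbf{Secondary points.} (i) Your $p_i = 2^{i-i_0}$ gives $\sum 1/p_i = 2$, violating the H\"older constraint $\sum 1/p_i \leq 1$; trivially fixable. (ii) You mention the ``quadratic'' remainder $Q$ in passing; the paper's $\kappa_{i,2}$ is a genuinely bilinear term that requires a separate argument via Lemma \ref{lem:bilinear} (mixed-derivative control on bilinear statistics), with its own H\"older scheme and its own constraint $|s| \leq L_1/(C_\beta L_2)$ to verify. (iii) The ``last layer,'' where $\chi_i$ hits $\partial\LN$, is not addressed, but the paper dispatches it briefly along the same lines as the first layer.
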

\begin{proof}[Proof of Claim \ref{claim:FiniteSize2}]
We first study $\kappa_{i}$ (see \eqref{def:ki}). Recall that:
\begin{itemize}
    \item The support of $\kappa_i$ with respect to the first variable is included in $\DD(2L)$.
    \item The support of $\kappa_i$ with respect to the second variable is included in the annulus $\DD(2^{i+1}) \setminus \DD(2^{i-1})$.
    \item For $y$ in $\bLa$ and $|x| \leq 2L$, we have $|x-y| \geq 8L$.
    \item $\TLpm(x) = x + \psiLpm(x)$, with $\psiLpm$ satisfying \eqref{eq:psipm} and \eqref{eq:RemL}.
    \item $|\chi_i|_\kk \leq \Cc_\chi 2^{-i\kk}$
\end{itemize}
After a Taylor's expansion of $\log|x-y + \psiLpm(x)|$, we get:
\begin{equation}
\label{taylor1}
\kappa_i(x,y) = \left( \hal \frac{\langle \Rem_L(x), x-y \rangle }{|x-y|^2} + \left(|\psiLp(x)|^2 + |\psiLm(x)|^2\right) \times \O(|x-y|^{-2}) \right)  \chi_i(y),
\end{equation}
where the $\O(|x-y|^{-2})$ can be differentiated. We expand $\frac{x-y}{|x-y|^2}$ once more around $x = 0$ and obtain:
\begin{equation}
\label{taylor2}
\frac{\langle \Rem_L(x), x-y \rangle }{|x-y|^2} = \frac{\langle \Rem_L(x), y \rangle }{|y|^2} + \Rem_L(x) |x| \times \O(|x-y|^{-2}),
\end{equation}
where the $\O(|x-y|^{-2})$ can be differentiated. 

Combining \eqref{taylor1} and \eqref{taylor2} and observing that $|\psiLp(x)|^2 + |\psiLm(x)|^2$ and $\Rem_L(x) |x|$ have the same size in the sense that:
\begin{itemize}
    \item $\psiLpm$ is of size $1$ and each derivative gains a factor $L^{-1}$ (\eqref{eq:psipm}),
    \item $\Rem_L$ is of size $L^{-1}$ and each derivative gains a factor $L^{-1}$ (\eqref{eq:RemL}), and $|x|$ is of size $L$,
\end{itemize}
we finally write $\kappa_i$ as $\kappa_{i,1} + \kappa_{i,2}$ where:
\begin{equation}
\label{eq:Kappa_iRewrite}
\kappa_{i,1}(x,y) = \hal \langle \Rem_L(x) , \frac{y}{|y|^2} \chi_i(y) \rangle, \quad \kappa_{i,2}(x,y) = \Err(x)\times \O(|x-y|^{-2})   \chi_i(y),
\end{equation}
with $|\Err|_\kk \leq \Cc L^{-\kk}$. In particular we have:
\begin{equation}
\label{eq:supykai}
\sup_{y} |\kappa_{i,2}(\cdot, y)|_{\1} \leq \Cc_\chi  L^{-1} 2^{-2i}, \quad |\kappa_{i,2}|_{\1 + \1} \leq \Cc_\chi L^{-1} 2^{-3i}.
\end{equation}
In the sequel, it will be useful to think of \eqref{eq:supykai} as the fact that:
\begin{itemize}
    \item For all $y$, $\kappa_{i,2}(\cdot, y)$ satisfies $|\kappa_{i,2}(\cdot, y)|_{\1} \leq \Cc_\chi 2^{-2i}  \times L^{-1}$ with a “prefactor” of size $2^{-2i}$.
    \item $\kappa_{i,2}$ satisfies $|\kappa_{i,2}|_{\1 + \1} \leq \Cc_\chi 2^{-2i} \times L^{-1} 2^{-i}$ with a “prefactor” of size $2^{-2i}$.
\end{itemize}

We now study the various layers within the dyadic decomposition of \eqref{dyadicki}. The indices $i$ such that $\supp \chi_i \subset \La$ do not play any role, and then we distinguish between the “first layer”, where $\supp \chi_i$ intersects $\partial \La$ (which corresponds to $i = \log_2 L + \O(1)$), the “last layer” $\supp \chi_i$ intersects $\partial \LN$ (which corresponds to $i = \hal \log_2 N + \O(1)$) and all the others “intermediate layers”. Let us start with the latter.

\paragraph{Intermediate layers.} Denote by $i_1$ the second layer (the first one appearing in the decomposition and not intersecting $\partial \La$), with $i_1 = \log_2 L + \O(1)$ and by $i_N$ the previous-to-last layer ($i_N = \hal \log_2 N + \O(1)$).
Since $\supp \chi_i$ does not intersect $\partial \La$ we may remove the indicator $\1_{\bLa}(y)$ and study:
\begin{equation*}
\EN\left[  \exp\left(2 \beta \sum_{i=i_1}^{i_N} \left|\iint_{\LN \times \LN} \kappa_{i}(x,y)    \dd \left(\bXN - \Leb\right)(x)  \dd (\bXN - \Leb)(y)\right| \right) \right] 
\end{equation*}
Splitting each $\kappa_i$ into $\kappa_{i,1} + \kappa_{i,2}$ and using Cauchy-Schwarz's inequality, it is enough to bound separately
\begin{multline*}
\EN \left[ \exp\left(\Cc \beta \sum_{i=_1}^{i_N} \left|\iint_{\LN \times \LN} \kappa_{i,1}(x,y)    \dd \left(\bXN - \Leb\right)(x)  \dd (\bXN - \Leb)(y)\right| \right) \right], \text{ and } \\ \EN \left[ \exp\left(\Cc \beta  \sum_{i=_1}^{i_N} \left|\iint_{\LN \times \LN} \kappa_{i,2}(x,y)    \dd \left(\bXN - \Leb\right)(x)  \dd (\bXN - \Leb)(y) \right| \right) \right].
\end{multline*}

\subparagraph{Contribution of $\kappa_{i,1}$.} 
We write, in view of \eqref{eq:Kappa_iRewrite}:
\begin{multline*}
\iint_{\LN \times \LN} \kappa_{i,1}(x,y)   \dd \left(\bXN - \Leb\right)(x)  \dd (\bXN - \Leb)(y) \\
= \left\langle \int_{\LN}  \hal \Rem_L(x)   \dd \left(\bXN - \Leb\right)(x), \int_{\LN}  \frac{y}{|y|^2} \chi_i(y)  \dd (\bXN - \Leb)(y) \right \rangle  = \hal \left\langle \Fluct[\Rem_L ] \times \Fluct[J_i] \right \rangle ,
\end{multline*}
using the notation $J_i : y \mapsto \frac{y}{|y|^2}$ as in \eqref{Ji}. 

Define $a_i := \Cc 2^{\frac{i}{10}} L^{-\frac{1}{10}}$ with $\Cc$ such that $\sum_{i=i_1}^{i_N} \frac{1}{a_i} =1$.
By convexity of $\exp$, we have:
\begin{multline}
\label{contributionki}
\EN \left[ \exp\left(\Cc \beta \sum_{i=i_1}^{i_N} \left| \iint_{\LN \times \LN} \kappa_{i,1}(x,y)    \dd \left(\bXN - \Leb\right)(x)  \dd (\bXN - \Leb)(y) \right| \right) \right] \\
= \EN \left[ \exp\left(\Cc \beta  \sum_{i=_i1}^{i_N} \frac{1}{a_i} a_i \left|\iint_{\LN \times \LN} \kappa_{i,1}(x,y)    \dd \left(\bXN - \Leb\right)(x)  \dd (\bXN - \Leb)(y) \right| \right) \right] \\
\leq \Cc \sum_{i=_u1}^{i_N} \frac{1}{a_i} \EN \left[ \exp\left(\Cc \beta a_i  \left|\iint_{\LN \times \LN} \kappa_{i,1}(x,y)    \dd \left(\bXN - \Leb\right)(x)  \dd (\bXN - \Leb)(y) \right| \right) \right],
\end{multline}
so we are left to control an average of terms of the form:
\begin{equation}
\label{eachterm}
\EN \left[ \exp\left(\Cc \beta   a_i |\Fluct[\Rem_L]| \times |\Fluct[J_i]| \right) \right], \quad i = \log_2 L + \O(1), \dots, \hal \log_2 N + \O(1).
\end{equation}
Before diving into the computations, let us present the heuristics: 
\begin{itemize}
    \item $J_i$ is a smooth function whose fluctuations are typically of order $2^{-i}$ by Lemma \ref{lem:fluct_smooth} (because we have $|J_i|_\kk \leq \Cc 2^{-i} \times 2^{-i\kk}$, with a “prefactor” of size $2^{-i}$, and $J_i$ is compactly supported within $\LN$).
    \item $\Rem_L $ is a Lipschitz function whose fluctuations are typically bounded by $1$ by Lemma \ref{lem:fluct_Lipschitz} (because we have $|\Rem_L |_\kk \leq \Cc L^{-1} L^{-\kk}$ with a “prefactor” of size $L^{-1}$),
\end{itemize}
thus $|\Fluct[\Rem_L]| \times |\Fluct[J_i]|$ is typically bounded by $2^{-i}$ and since we chose $a_i \leq 2^i$ we can expect each term \eqref{eachterm} to be of order at most $1$, which then yields a total contribution of order $1$ in \eqref{contributionki}. 

The difficulty is that we have a product of fluctuations in the exponent of \eqref{eachterm}, and our way out is that we can take $a_i$ much smaller than $2^{i}$ in \eqref{contributionki}. We argue as follows:
\begin{itemize}
    \item If $\left| \Fluct[\Rem_L ]\right| \leq 2^i a_i^{-1}$, since $|2^{i} J_i|_{\kk} \leq \Cc$ we can use the bound of Lemma \ref{lem:fluct_smooth}:
    \begin{equation}
    \label{eq:RemLBounded}
\EN \left[ \exp\left(\Cc \beta   a_i |\Fluct[\Rem_L | \times |\Fluct[J_i]| \right) \1_{\left| \Fluct[\Rem_L ]\right| \leq 2^i a_i^{-1}} \right] \\
\leq 2 \EN \left[ \exp\left( \beta  \Fluct[2^{i} J_i]  \right)  \right] \leq \Cc_\beta.
    \end{equation}

\item If $\left| \Fluct[\Rem_L ]\right| > 2^i a_i^{-1}$, we use instead Cauchy-Schwarz's inequality and write:
\begin{multline}
\label{RemLPasBoundedA}
\EN \left[ \exp\left(\Cc \beta   a_i | \Fluct[\Rem_L ] | \times | \Fluct[J_i] | \right) \1_{\left| \Fluct[\Rem_L ]\right| > 2^i a_i^{-1}} \right] \\
\leq \EN \left[ \exp\left( 2 \Cc \beta   a_i | \Fluct[\Rem_L ]| \times |\Fluct[J_i]|\right)  \right]^\hal \PNbeta\left[ \{\left| \Fluct[\Rem_L ]\right| > 2^i a_i^{-1}\} \right]^\hal.
\end{multline}
\begin{itemize}
    \item Observe that by \eqref{eq:psipm}, the function $L \Rem_L $ belongs to $\mathcal{L}_L$ with the notation of Lemma \ref{lem:fluct_Lipschitz}, hence 
    \begin{equation*}
\log \EN[e^{\Fluct^2[\Rem_L ]}] = \log \EN[e^{\frac{1}{L^2} \Fluct^2[L \Rem_L ]}] \leq \Cc_\beta L^{-2} \times L^2 \leq \Cc_\beta
    \end{equation*}
Since we chose $a_i$ such that $2^i a_i^{-1} \gg 1$, Markov's inequality yields:
\begin{equation}
\label{RemLPasBoundedB}
\PNbeta\left[ \{\left| \Fluct[\Rem_L ]\right| > 2^i a_i^{-1}\} \right]^\hal \leq \exp\left(- \frac{1}{\Cc_\beta} 2^{2i} a_i^{-2}\right).
\end{equation}
\item On the other hand, we can write, with $M \geq 1$ to be chosen later:
\begin{multline*}
a_i |\Fluct[\Rem_L ] | \times |\Fluct[J_i]| = \frac{a_i}{L 2^i} |\Fluct[L \Rem_L ] | \times |\Fluct[2^i J_i] | \\
\leq \frac{a_i}{L 2^i}  \left( \frac{M a_i^3}{L 2^{i}} \Fluct^2[L\Rem_L ] + \frac{L 2^{i}}{M a_i^3} \Fluct^2[2^i J_i] \right),
\end{multline*}
and thus, using Hölder's inequality and the first exponential moment bound of \eqref{eq:bound_fluct_Lipschitz}
\begin{equation}
\label{RemLPasBoundedC}
\EN \left[ \exp\left( 2 \Cc \beta   a_i | \Fluct[\Rem_L ]| \times |\Fluct[J_i]|\right)  \right] \leq \exp\left(\Cc_\beta \left(\frac{M a_i^4}{L^2 2^{2i}} \times L^2 + \frac{1}{M a_i^2} \times 2^{2i} \right)\right).
\end{equation}
We have used here the fact that, with our choice of $a_i$, we have $\frac{a_i^4}{L^2 2^{2i}} \ll 1, \frac{1}{a_i^2} \ll 1$ so we are indeed in position to apply \eqref{eq:bound_fluct_Lipschitz}.

\item Combining \eqref{RemLPasBoundedA}, \eqref{RemLPasBoundedB}, \eqref{RemLPasBoundedC} we obtain:
\begin{multline}
\label{RemLPasBoundedD}
\EN \left[ \exp\left(\Cc \beta   a_i | \Fluct[\Rem_L ] | \times | \Fluct[J_i] | \right) \1_{\left| \Fluct[\Rem_L ]\right| > 2^i a_i^{-1}} \right] \\
\leq \exp\left(\Cc_\beta \left(\frac{M a_i^4}{2^{2i}} + \frac{1}{M a_i^2} \times 2^{2i}\right) - \frac{1}{\Cc_\beta} 2^{2i} a_i^{-2} \right)
\end{multline}
by our choice of $a_i$ we have $\frac{a_i^4}{2^{2i}} \ll 2^{2i} a_i^{-2}$ and we can now choose $M$ large enough so that the negative term in the exponent wins. We obtain:
\begin{equation*}
\EN \left[ \exp\left(\Cc \beta   a_i | \Fluct[\Rem_L ] | \times | \Fluct[J_i] | \right) \1_{\left| \Fluct[\Rem_L ]\right| > 2^i a_i^{-1}} \right]  \leq \exp\left(-\frac{1}{\Cc_\beta} 2^{2i} a_i^{-2}\right) \leq 1.
\end{equation*}
\end{itemize}
\item In summary, we have shown that:
\begin{equation*}
\EN \left[ \exp\left(\Cc \beta   a_i  |\Fluct[\Rem_L ]| \times |\Fluct[J_i] \| \right) \right] \leq \Cc_\beta, 
\end{equation*}
and inserting this into \eqref{contributionki} yields:
\begin{equation}
\label{eq:HighLayersConclusion}
\EN \left[ \exp\left(\Cc \beta \sum_{i=i_1}^{i_N} \left| \iint_{\LN \times \LN} \kappa_{i,1}(x,y)    \dd \left(\bXN - \Leb\right)(x)  \dd (\bXN - \Leb)(y) \right| \right) \right] \leq \Cc_\beta,
\end{equation}
so the contribution of $\kappa_{i,1}$ for the “intermediate layers” is bounded.
\end{itemize}

\subparagraph{Contribution of $\kappa_{i,2}$.} Let us start again with some heuristics. We have $|\kappa_{i,2}|_{\1 + \1} \leq \Cc 2^{-2i} \times (L 2^{i})^{-1}$ (see \eqref{eq:supykai}) and thus by the bound on “bilinear statistics” stated in Lemma \ref{lem:bilinear}, each term of the form
\begin{equation*}
\iint_{\LN \times \LN} \kappa_{i,2}(x,y)   \dd \left(\bXN - \Leb\right)(x)  \dd (\bXN - \Leb)(y)
\end{equation*}
is typically of size at most $2^{-2i} \times L 2^{i} = L 2^{-i}$, so that the sum over $i$ should also be bounded (recall that $i \geq i_1 = \log_2 L + \O(1)$). 

To handle all the terms together, we first use Hölder's inequality and write e.g.
\begin{multline*}
\EN\left[\exp\left( 2\beta \sum_{i=i_1}^{i_N} \left| \iint_{\LN \times \LN} \kappa_{i,2}(x,y)   \dd \left(\bXN - \Leb\right)(x)  \dd (\bXN - \Leb)(y) \right| \right) \right] \\
= \EN\left[\exp\left(  2\beta \sum_{i=i_1}^{i_N}   2^{-\frac{i}{2}} L^{\hal}  2^{\frac{i}{2}} L^{-\hal} \left| \iint_{\LN \times \LN} \kappa_{i,2}(x,y)   \dd \left(\bXN - \Leb\right)(x)  \dd (\bXN - \Leb)(y) \right| \right) \right]  \\
\leq \sum_{i=i_1}^{i_N}   2^{-\frac{i}{2}} L^{\hal} \EN\left[\exp\left( 2\beta 2^{\frac{i}{2}} L^{-\hal} \left| \iint_{\LN \times \LN} \kappa_{i,2}(x,y)   \dd \left(\bXN - \Leb\right)(x)  \dd (\bXN - \Leb)(y) \right| \right) \right] \\
= 
\sum_{i=i_1}^{i_N}   2^{-\frac{i}{2}} L^{\hal} \EN\left[\exp\left( 2\beta 2^{-\frac{3i}{2}} L^{-\hal} \left| \iint_{\LN \times \LN} 2^{2i} \kappa_{i,2}(x,y)   \dd \left(\bXN - \Leb\right)(x)  \dd (\bXN - \Leb)(y) \right| \right) \right]
.
\end{multline*}
Now, since $|2^{2i} \kappa_{i,2}|_{\1 + \1} \leq \Cc_\chi L^{-1} 2^{-i}$ by \eqref{eq:supykai}  and $s := 2^{-\frac{3i}{2}} L^{-\hal}$ clearly satisfies
\begin{equation*}
2^{-\frac{3i}{2}} L^{-\hal} \leq \frac{L}{2^i}, 
\end{equation*}
we can apply Lemma \ref{lem:bilinear} and get (recall that $i \geq i_1 = \log_2 L + \O(1)$):
\begin{multline*}
\log \EN\left[\exp\left( 2\beta 2^{-\frac{3i}{2}} L^{-\hal} \left| \iint_{\LN \times \LN} 2^{2i} \kappa_{i,2}(x,y)  \dd \left(\bXN - \Leb\right)(x)  \dd (\bXN - \Leb)(y) \right| \right) \right] \\ \leq \Cc_\beta 2^{\frac{-3i}{2}} L^{-\hal} \times  2^i L \leq \Cc_\beta.
\end{multline*}

\subparagraph{Conclusion.} The contribution to the left-hand side of \eqref{eqfinitesize2} due to the “intermediate” layers is bounded. 

\paragraph{First layer.} Let us now turn to the “first” layer, where $\supp \chi_i$ intersects $\partial \La$ (which corresponds to $i = \log L + \O(1)$). We split again $\kappa_i$ as $\kappa_{i,1} + \kappa_{i,2}$.

\subparagraph{The contribution of $\kappa_{i,1}$.}
In view of \eqref{eq:Kappa_iRewrite} we can bound the contribution of $\kappa_{i,1}$ by:
\begin{multline*}
\log \EN\left[\exp\left( \iint_{\LN \times \LN} \kappa_{i,1}(x,y)   \dd \left(\bXN - \Leb\right)(x)  \dd (\bXN - \Leb)(y)  \right)\right] \\
\leq \log \EN\left[ \exp\left( |\Fluct[\Rem_L]| \times |\Fluct[J_i]| \right) \right] \\
\leq \log \EN\left[\exp\left( L^{-2} \Fluct^2[L \Rem_L] \right) \right] + \log \EN\left[ \exp \left(2^{-2i} \Fluct^2[2^i J_i] \right) \right].
\end{multline*}
We apply Lemma \ref{lem:fluct_Lipschitz} to both terms and control the sum by $\Cc_\beta$.

\subparagraph{The contribution of $\kappa_{i,2}$.}
We use a rough bound of the type:
    \begin{multline*}
\iint_{\LN \times \LN} \kappa_{i,2}(x,y) \1_{\bLa}(y)   \dd \left(\bXN - \Leb\right)(x)  \dd (\bXN - \Leb)(y) \\
 \leq \left( \Points(\bXN, \supp \chi_i) + |\supp \chi_i| \right) \times \sup_{y \in \supp \chi_i} \left| \int_{\LN} \kappa_{i,2}(x,y)   \dd \left(\bXN - \Leb\right)(x) \right|, 
    \end{multline*}
and then use Cauchy-Schwarz's inequality in the right-hand side to write:
\begin{multline*}
\left( \Points(\bXN, \supp \chi_i) + |\supp \chi_i| \right) \times \sup_{y \in \supp \chi_i} \left| \int_{\LN} \kappa_{i,2}(x,y)   \dd \left(\bXN - \Leb\right)(x) \right| \\
\leq \frac{2^{-2i}}{\Cc_\beta} \left(\Points^2(\bXN, \supp \chi_i) + 2^{2i} \right) + \Cc_\beta  2^{2i} \sup_{y \in \supp \chi_i} \Fluct^2[\kappa_{i,2}(\cdot, y)],
\end{multline*}
followed by Hölder's inequality (within the expectation):
\begin{multline*}
\log \EN\left[\exp \left( \beta \iint_{\LN \times \LN} \kappa_{i}(x,y) \1_{\bLa}(y)   \dd \left(\bXN - \Leb\right)(x)  \dd (\bXN - \Leb)(y) \right) \right] \\
\leq \log \EN\left[\exp \left( \frac{2^{-2i}}{\Cc_\beta} \left(\Points^2(\bXN, \supp \chi_i) + 2^{2i} \right) \right) \right] + \log \EN\left[ \sup_{y \in \supp \chi_i} \exp\left(\Cc_\beta  2^{2i}  \Fluct^2[\kappa_i(\cdot, y)] \right) \right].
\end{multline*}
The bound on exponential moments of $\Points^2$  in \eqref{eq:bound_points_finite} tells us that:
\begin{equation*}
 \log \EN\left[\exp \left( \frac{2^{-2i}}{\Cc_\beta} \left(\Points^2(\bXN, \supp \chi_i) + 2^{2i} \right) \right) \right] \leq \Cc_\beta, 
\end{equation*}
so we focus on the other term. We have, for all $y$, $|2^{2i} \kappa_{i,2}(\cdot, y)|_\1 \leq L^{-1}$ (see \eqref{eq:supykai}), and thus
    \begin{equation*}
\log \EN\left[ \sup_{y \in \supp \chi_i} \exp\left(\Cc_\beta  2^{2i}  \Fluct^2[\kappa_{i,2}(\cdot, y)] \right) \right] \leq \log \EN\left[ \sup_{\varphi \in \mathcal{L}_{L}} \exp\left(\Cc_\beta 2^{-2i} \Fluct^2[ \varphi ] \right) \right]
    \end{equation*}
    where $\mathcal{L}_{L}$ is as in Lemma \ref{lem:fluct_Lipschitz}. Applying that lemma in its strongest version (the first bound in \eqref{eq:bound_fluct_Lipschitz}) gives:
    \begin{equation*}
 \log \EN\left[ \sup_{\varphi \in \mathcal{L}_{L}} \exp\left(\Cc_\beta 2^{-2i} \Fluct^2[ \varphi ] \right) \right] \leq \Cc_\beta 2^{-2i} L^2 \leq \Cc_\beta
    \end{equation*}
since $i = \log_2 L + \O(1)$.

The last layer can be treated the same way. This concludes the proof of Claim \ref{claim:FiniteSize2}.
\end{proof}

\subsubsection*{Step 3. Conclusion: proof of \eqref{eq:controlDiff1}.}
Combining \eqref{eq:effetTransportFini}, the identity \eqref{FiniteSize} and  Claim \ref{claim:FiniteSize2}, we deduce that:
\begin{equation}
\label{eq:FiniteSize3}
\log  \E_{\PNbeta}\left[ \exp\left( \beta \left| \FLa(\bXN) - \hal \left(\FLa(\TLp \cdot \bXN) + \FLa(\TLm \cdot \bXN)\right)  \right|  \right) \right] \leq \Cc_\beta,
\end{equation}
which expresses the fact that \emph{the effect of an (averaged) localized translation on the local interaction energy in $\La$ has bounded exponential moments under the finite-volume Gibbs measure.} 

In order to pass \eqref{eq:FiniteSize3} to the exp-tame limit, which would allow us to derive \eqref{eq:controlDiff1}, we need to check that the function $\bX \mapsto \beta \left| \FLa(\bX) - \hal \left(\FLa(\TLp \cdot \bX) + \FLa(\TLm \cdot \bX)\right)  \right|$ is indeed exp-tame. It is certainly local, so it remains to check that it does not grow faster than $\frac{1}{\Cc_\beta} \Points(\bX, \La)^2$.

Observe that since $|\psiLpm|_\1 \leq \Cc L^{-1}$ (see \eqref{eq:psipm}) we have, for $L$ large enough, that $|x-y|$ and $|\TLpm(x) - \TLpm(y)|$ are always comparable, and in particular:
\begin{equation*}
\inf_{t \in [0,1]} \left|x-y + t(\psiLpm(x) - \psiLpm(y))\right| \geq \hal |x-y|.
\end{equation*}
It implies that we may write a Taylor's expansion of $\log\left|x-y + \psiLpm(x) - \psiLpm(y)\right|$ of the form:
\begin{equation*}
\log\left|x-y + \psiLpm(x) - \psiLpm(y)\right| = \log|x-y| + \O\left(\frac{|\psiLpm(x) - \psiLpm(y)|}{|x-y|}\right) = \log |x-y| + \O(L^{-1}).
\end{equation*}
Hence with a crude bound we obtain indeed:
\begin{multline*}
\FLa(\bX) - \hal \left(\FLa(\TLp \cdot \bX) + \FLa(\TLm \cdot \bX) \right) \\
= \iint_{\La \times \La} -\log|x-y| \dd \left(\bX - \hal \left(\TLp \cdot \bX + \TLm \cdot \bX\right) \right)(x) \dd \left(\bX - \hal \left(\TLp \cdot \bX + \TLm \cdot \bX\right) \right) (y) \\ 
= \iint_{\La \times \La} \left(- \log|x-y| - \hal \left(- \log|\TLp(x) -y | -  \log|\TLm(x) -y |\right)\right) \dd \left(\bX  - \Leb\right)(x) \dd \left(\bX  - \Leb\right)(y) \\
= \iint_{\La \times \La} \O(L^{-1}) \dd \left(\bX  - \Leb\right)(x) \dd \left(\bX  - \Leb\right)(y)  \leq \Cc L^{-1} \left( |\La|^2 + \Points^2(\bX, \La) \right).
\end{multline*}
Thus $\Diff^1$ is indeed $\exp$-tame, which concludes the proof of Lemma \ref{lem:controlDiff1}.
\end{proof}

It remains to prove Lemma \ref{lem:controlDiff2}. As it turns out, this was essentially done in the course of the previous proof.
\begin{proof}[Proof of Lemma \ref{lem:controlDiff2}]
 Observe that, by definition, $\Diff^2_\La(\bX, \bX)$ is given by (cf. \eqref{Push}):
\begin{multline*}
\iint_{\La \times (\R^2 \setminus \La)} -\log|x-y| \dd \left(\bX - \hal \left(\TLp \cdot \bX + \TLm \cdot \bX \right)\right)(x) \dd (\bX - \Leb)(y) \\ = 
\iint_{\La \times (\R^2 \setminus \La)} \left(- \log|x-y| - \hal \left(- \log|\TLp(x) -y | -  \log|\TLm(x) -y |\right)\right)  \dd \bX(x) \dd (\bX - \Leb)(y),
\end{multline*}
and that for the same reason as above we may introduce the background in the first integral. We must thus control (cf. \eqref{dyadicki}) the exponential moments of:
\begin{multline*}
\iint_{\La \times (\R^2 \setminus \La)} \left(- \log|x-y| - \hal \left(- \log|\TLp(x) -y | -  \log|\TLm(x) -y |\right)\right)  \dd \left(\bX- \Leb\right)(x) \dd (\bX - \Leb)(y) \\
= \sum_{i=1}^{+\infty} \iint_{\R^2 \times \R^2} \kappa_{i}(x,y) \1_{\bLa}(y) \dd \left(\bX- \Leb\right)(x) \dd (\bX - \Leb)(y).
\end{multline*}
This is as in \eqref{dyadicki} but for an infinitely extended exterior configuration. Since we know that the results of Lemma \ref{lem:fluct_Lipschitz}, Lemma \ref{lem:fluct_smooth} etc. are valid for the infinite-volume limit as well as for the finite-$N$ system, the proof of Claim \ref{claim:FiniteSize2} applies readily. 
\end{proof}

\subsection{Definition of “good” processes}
\label{sec:good}
We are now in position to define our class of “good” point processes. We say that $\Pp$ is good when it satisfies the following properties for some constant $\Cc$ \emph{possibly depending on $\Pp$}.
\begin{enumerate}
    \item Control on the exponential moment of the number of points as in \eqref{ControlPointsInfini}:
\begin{equation*}
\log \E_{\Pp} \left[\exp\left( \frac{1}{\Cc} \Points^2(\bX, \DD(x,\ell)) \right)  \right] \leq \Cc \ell^4.
    \end{equation*}
    \item Control on the $1$-point correlation function as in \eqref{bound:CF_infinite_record}:
    \begin{equation*}
\sup_{y \in \R^2} \rho_{1, \Pp}(y) \leq \Cc.
    \end{equation*}
    \item Control on the exponential moment of fluctuations for Lipschitz test functions as in \eqref{eq:bound_Fluct_LIPINF}
\begin{equation}
\log \E_{\Pp} \left[ \sup_{\varphi \in \Lell} e^{\frac{1}{\Cc_\beta} \Fluct^2[\varphi]}  \right] \leq \Cc \ell^2,
\end{equation}
and for smooth test functions as in \eqref{eq:bound_Fluct_smoothINF}.    
    \item Control on the exponential moment of $\Diff^1, \Diff^2$ as in \eqref{eq:controlDiff1}, \eqref{eq:controlDiff2}:
    \begin{equation}
    \label{diffForGood}
\log \E_{\Pp} \left[ \exp\left( 2 \beta \Diff^1_L(\bX) \right) \right] \leq \Cc, \quad \log \E_{\Pp} \left[\exp\left(2\beta \Diff^2_L(\bX, \bX) \right) \right] \leq \Cc.
    \end{equation}
\end{enumerate}
We can already make some observations:
\begin{enumerate}
    \item Every weak limit point $\PI$ is “good”.
    \item Following the proof of Proposition \ref{prop:Zero} (see Remark \ref{rem:NeededForMove}), we know that the first three items guarantee that for all compact $\La \subset \R^2$, for $\Pp$-a.e. $\bX$ and for all configuration $\bX'$ in $\La$ \emph{such that $\bX'$ has the same number of points as $\bX$ in $\La$}, the quantity $\MLa(\bX', \bX)$ exists and is finite as in \eqref{eq:MLa}, hence \eqref{DLR} equations make sense.
    \item The mixture of two “good” processes is good.
\end{enumerate}

\subsection{The infinite-volume argument}
\newcommand{\GG}{\mathcal{G}}
\subsubsection*{Reminder on the geometry of Gibbs measures.}
The trick of \cite{bricmont1979equivalence} used in \cite{frohlich1981absence,MR1886241} (and many other subsequent works) relies on the structure of extremal Gibbs measures. 

Let $\GG$ be the subset of all probability measures $\Pp$ on $\Conf$ such that:
\begin{enumerate}
    \item $\Pp$ is “good” in the sense of Section \ref{sec:good}.
    \item $\Pp$ satisfies \eqref{DLR} (which makes sense thanks to the first item).
\end{enumerate}
$\GG$ clearly forms a convex subset and one can consider its extremal elements. Every measure in $\GG$ has an extremal decomposition i.e. can be written (in an essentially unique way) as a mixture of extremal ones. Extremal Gibbs measures enjoy interesting properties, in particular it is known (see e.g. \cite[Lemma 6.62.]{MR3752129}) that:
\begin{equation}
\label{ExtremalPpty} \text{Two distinct extremal elements of $\GG$ are mutually singular.}
\end{equation}
Let us quickly comment on the proof of \eqref{ExtremalPpty}, as we are imposing extra conditions by considering only “good” solutions to \eqref{DLR}, which is not quite the usual setting. 

\begin{proof}[Proof of \eqref{ExtremalPpty}] We follow the proof of \cite[Thm. 6.58]{MR3752129}.
The key fact leading to \eqref{ExtremalPpty} is that extremal elements of $\GG$ are tail-trivial: if $A$ is an event which is measurable with respect to $\R^2 \setminus \DD(R)$ for all $R > 0$, then $\Pp(A) = 0$ or $1$ for $\Pp$ extremal. The reason for this is that otherwise, one could consider $\Pp_0 := \frac{1}{\Pp(A)} \1_{A} \Pp$ and $\Pp_1 := \frac{1}{1 - \Pp(A)} \1_{\bar{A}} \Pp$ and write $\Pp$ as the mixture of $\Pp_0, \Pp_1$, which would contradict the extremality of $\Pp$ provided $\Pp_0$ and $\Pp_1$ are both in $\GG$.

Now, the fact that $\Pp_0, \Pp_1$ both satisfy \eqref{DLR} is a general fact that uses only the structure of DLR equations, see \cite[Prop. 6.61]{MR3752129}. It remains to observe that $\Pp_0, \Pp_1$ are also both “good” processes: this is true because we allowed the constant $\Cc$ in the definition of “good” processes to depend on $\Pp$.

Since two solutions of \eqref{DLR} which coincide on tail events are always equal, this concludes.
\end{proof}

Theorem \ref{theo:TI} can thus be reduced to showing that (recall that $\Ppp, \Ppm$ are the translates of $\Pp$ by $\pm \vu$):
\begin{proposition}
\label{prop:reduceTI}
Every $\Pp \in \GG$ is absolutely continuous with respect to $\hal\left( \Ppp + \Ppm \right)$.
\end{proposition}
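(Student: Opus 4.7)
The goal is to show that for every local event $A \subset \Conf$,
\begin{equation*}
\Pp(A) \leq C_\beta \sqrt{\hal(\Ppp(A) + \Ppm(A))},
\end{equation*}
with $C_\beta$ depending only on $\beta$ and the ``goodness'' constants of $\Pp$. Local events form an algebra generating the Borel $\sigma$-algebra on the Polish space $\Conf$, so a Carath\'eodory approximation against the finite measures $\Pp, \Ppp, \Ppm$ extends this inequality to all Borel events and yields $\Pp \ll \hal(\Ppp + \Ppm)$.

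\textbf{The key local inequality.} Fix $A$ depending only on $\bX|_{\DD(R)}$; choose $L$ with $R + |\vu| \leq L/2$ and set $\La = \DD(10L)$, so that the localized translations $\TLpm$ from Lemma~\ref{lem:loc_trans} agree with $\tau_{\pm\vu}$ on the support of $\1_A$ and of $\1_A \circ \tau_{\pm\vu}$, while $\TLpm = \Id$ on $\bLa$. Write $H := \FLa + \MLa$ and
\begin{equation*}
\nu_\bX(d\bX') := \frac{1}{\KLbeta(\bX)} e^{-\beta H(\bX', \bX)} d\Bin_{\La, \Points(\bX, \La)}(\bX'), \qquad \mu_\bX(A) := \int \1_A(\bX' \cup \bX_{\bLa}) d\nu_\bX(\bX').
\end{equation*}
By \eqref{DLR}, $\Pp(A) = \E_\Pp[\mu_\bX(A)]$. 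Applying \eqref{DLR} to $\1_A \circ \tau_\vu$ and then substituting $\bX' \mapsto \TLm(\bX')$ in the binomial integral (area-preserving, stabilizes $\La$) yields
\begin{equation*}
\Ppp(A) = \E_\Pp\!\left[\int \1_A(\bX' \cup \bX_{\bLa}) e^{-\beta \delta^-(\bX', \bX)} d\nu_\bX(\bX')\right], \quad \delta^\pm(\bX', \bX) := H(\TLpm \bX', \bX) - H(\bX', \bX),
\end{equation*}
and symmetrically $\Ppm(A) = \E_\Pp[\int \1_A(\bX' \cup \bX_{\bLa}) e^{-\beta \delta^+} d\nu_\bX]$. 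The crucial algebraic identity from \eqref{eq:Diff_1}, \eqref{eq:Diff_2} is $\hal(\delta^+ + \delta^-) = \Diff^1_\La(\bX') + \Diff^2_\La(\bX', \bX) =: \Diff(\bX', \bX)$. Cauchy-Schwarz on $\nu_\bX$ with the splitting $1 = e^{-\beta \Diff/2} \cdot e^{\beta \Diff/2}$, followed by the convexity estimate $e^{-\beta \Diff} \leq \hal(e^{-\beta \delta^+} + e^{-\beta \delta^-})$, gives the bound (pointwise in $\bX$)
\begin{equation*}
\mu_\bX(A) \leq \sqrt{\hal \int \1_A (e^{-\beta \delta^+} + e^{-\beta \delta^-}) d\nu_\bX} \cdot \sqrt{\int e^{\beta \Diff} d\nu_\bX}.
\end{equation*}
Taking $\E_\Pp$ and applying Cauchy-Schwarz on the outer expectation, the first factor yields $\sqrt{\hal(\Ppp(A) + \Ppm(A))}$. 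For the second, observe that $\bX' \mapsto e^{\beta \Diff(\bX', \bX)}$ depends on $(\bX', \bX)$ only through $\bX' \cup \bX_{\bLa}$, so a \emph{second} application of \eqref{DLR} (extended to this positive unbounded observable by monotone truncation) produces
\begin{equation*}
\E_\Pp\!\left[\int e^{\beta \Diff(\bX', \bX)} d\nu_\bX(\bX')\right] = \E_\Pp\!\left[e^{\beta(\Diff^1_\La(\bX) + \Diff^2_\La(\bX, \bX))}\right] \leq e^{\Cc_\beta},
\end{equation*}
by Cauchy-Schwarz and property~(4) of ``good'' processes \eqref{diffForGood}, uniformly in $L$.

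\textbf{Main obstacle.} The whole argument hinges on the double use of \eqref{DLR} --- the less obvious being the ``push-back'' of $e^{\beta \Diff}$ to an outer expectation --- together with the identity $\hal(\delta^+ + \delta^-) = \Diff^1 + \Diff^2$, which reflects the near opposition of the area-preserving maps $\TLp, \TLm$ (Lemma~\ref{lem:loc_trans}). The main technical subtlety is justifying \eqref{DLR} for the unbounded positive function $e^{\beta \Diff}$, which requires monotone truncation together with a posteriori integrability from \eqref{diffForGood}. Once the local inequality is obtained with an $L$-independent constant, the extension to Borel events via simultaneous approximation by local events is routine.
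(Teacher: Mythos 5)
Your proof is correct and follows essentially the same approach as the paper: the key identity $\hal(\delta^+ + \delta^-) = \Diff^1_\La + \Diff^2_\La$, Cauchy--Schwarz, convexity of $\exp$, the area-preserving change of variables pushing $\TLpm$ onto the indicator, locality of $A$, \eqref{DLR}, and the ``good'' bound \eqref{diffForGood}, together with an extension from local events to all Borel events. The only cosmetic difference is the order of operations: you disintegrate via \eqref{DLR} first and then apply Cauchy--Schwarz on $\nu_\bX$ and again on $\E_\Pp$, which forces a second application of \eqref{DLR} (to the unbounded positive observable $e^{\beta\Diff}$) to collapse the error factor, whereas the paper performs a single Cauchy--Schwarz on $\E_\Pp[\1_A]$ before invoking \eqref{DLR}, reaching the same two-factor bound $\Pp(A)^2 \le \hal\left(\Ppp(A)+\Ppm(A)\right) \E_\Pp\left[e^{\beta(\Diff^1_\La+\Diff^2_\La)}\right]$ more directly.
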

Indeed, to prove Theorem \ref{theo:TI}, namely that all elements of $\GG$ are translation-invariant, it is enough to show that all \emph{extremal} elements of $\GG$ are translation-invariant (as this property is stable under mixtures). Observe that for all $\La$ and all $\bX, \bX'$, since the interaction potential is translation-invariant, we have:
\begin{equation*}
\FLa(\bX) = \F_{\La + \u}(\bX + \u), \quad \MLa(\bX', \bX) = \mathsf{M}_{\La + \u}(\bX' + \u, \bX + \u),
\end{equation*}
from which it follows that if $\Pp$ satisfies \eqref{DLR}, so do $\Ppp$ and $\Ppm$. Moreover it is clear that “goodness” of a point process is invariant by translation, hence if $\Pp$ is in $\GG$ so are $\Ppp$ and $\Ppm$ - and if $\Pp$ is extremal, so are they. 

To conclude, let $\Pp$ be an extremal element of $\GG$ and assume the result of Proposition \ref{prop:reduceTI}. Then $\Pp$ is absolutely continuous with respect to a mixture of $\Ppp$ and $\Ppm$, and thus cannot be singular with both of them. We must have $\Pp = \Ppp$ or $\Pp = \Ppm$, and the two are in fact equivalent.

We now turn to proving Proposition \ref{prop:reduceTI}. The argument is similar in spirit to the classical ones of \cite{frohlich1981absence,MR1886241}, however here the energy cost of localized translations is not bounded uniformly but probabilistically (see Lemma \ref{lem:controlDiff1} and Lemma \ref{lem:controlDiff2}) which induces some extra manipulations.

\begin{proof}[Proof of Proposition \ref{prop:reduceTI}]
If we prove that for all events $A$ in $\Conf$, we have:
\begin{equation}
\label{AbsoCont}
\Pp^2(A) \leq \Cc \hal\left( \Ppp(A) + \Ppm(A) \right)
\end{equation}
with some constant $\Cc$ depending only on $\Pp$, then absolute continuity of $\Pp$ with respect to $\hal\left( \Ppp + \Ppm \right)$ will follow. In fact, it is enough to show \eqref{AbsoCont} for all events $A$ that are \emph{local} and to apply a monotone class argument: the set of all events $A$ such that \eqref{AbsoCont} holds clearly forms a monotone class, and contains local events, thus it contains the $\sigma$-algebra generated by them, which is the whole $\sigma$-algebra.

Let $A$ be a local event in $\Conf$ and $L \geq 1$ be such that $A$ is $\DD(\frac{L}{10})$-local. We will simply write $\Tp, \Tm$ as well as $\Diff^1, \Diff^2$ instead of $\Diff^1_{\La}, \Diff^2_\La$ when referring to the objects defined in \eqref{eq:Diff_1}, \eqref{eq:Diff_2}.

On the one hand, Cauchy-Schwarz's inequality gives:
\begin{multline}
\label{PpPpPm}
\Pp(A)^2 = \E_\Pp[\1_A]^2 = \E_\Pp\left[\1_A e^{-\frac{\beta}{2} \left(\Diff^1(\bX) + \Diff^2(\bX, \bX)\right)} e^{\frac{\beta}{2} \left(\Diff_1(\bX) + \Diff_2(\bX, \bX)\right)} \right]^2 \\
\leq \E_\Pp\left[\1_A e^{- \beta \left(\Diff_1(\bX) + \Diff_2(\bX, \bX) \right)} \right] \E_\Pp\left[e^{\beta \left(\Diff_1(\bX) + \Diff_2(\bX, \bX)\right)} \right], 
\end{multline}
and on the other hand, using the \eqref{DLR} equations and the definition of $\Diff^1, \Diff^2$:
\begin{multline*}
\E_\Pp\left[\1_A e^{- \beta \left(\Diff_1(\bX) + \Diff_2(\bX, \bX) \right)} \right] \\
= \E_\Pp\left[ \frac{1}{\KLbeta(\bX)} \int_{\bX' \in \Conf(\La, \Points(\bX, \La))} \1_A(\bX') e^{- \beta \left(\Diff^1(\bX') + \Diff^2(\bX', \bX) \right)} e^{- \beta\left( \FLa(\bX') + \MLa(\bX', \bX)    \right) } \dd \bX'  \right] \\
= \E_\Pp\left[ \frac{1}{\KLbeta(\bX)}  \int_{\bX' \in \Conf(\La, \Points(\bX, \La))} \1_A(\bX') e^{- \hal \beta\left( \FLa(\Tp \cdot \bX') + \MLa(\Tp \cdot \bX', \bX)  \right) - \hal \beta\left( \FLa(\Tm \cdot \bX') + \MLa(\Tm \cdot \bX', \bX)  \right) } \dd \bX'   \right].
\end{multline*}
For clarity, here and below we will write $\dd \bX'$ and add the condition $\bX' \in \Conf(\La, \Points(\bX, \La))$ instead of writing $\dd \Bin_{\La, \Points(\bX, \La)}(\bX')$.

Using the convexity of $\exp$, we obtain:
\begin{multline}
\label{usingConvexity}
\E_\Pp\left[\1_A e^{- \beta \left(\Diff_1(\bX) + \Diff_2(\bX, \bX) \right)} \right] \\
\leq \hal \E_\Pp\left[ \frac{1}{\KLbeta(\bX)} \int_{\bX' \in \Conf(\La, \Points(\bX, \La))} \1_A(\bX') e^{- \beta\left( \FLa(\Tp \cdot \bX') + \MLa(\Tp \cdot \bX', \bX)  \right)} \dd \bX'   \right] \\ 
+ \hal \E_\Pp\left[ \frac{1}{\KLbeta(\bX)}  \int_{\bX' \in \Conf(\La, \Points(\bX, \La))} \1_A(\bX') e^{- \beta\left( \FLa(\Tm \cdot \bX') + \MLa(\Tm \cdot \bX', \bX)  \right) } \dd \bX'   \right].
\end{multline}

Next, we change variables in both integrands and write, for fixed $\bX$:
\begin{multline*}
\int_{\bX' \in \Conf(\La, \Points(\bX, \La))} \1_A(\bX') e^{- \beta\left( \FLa(\Tp \cdot \bX') + \MLa(\Tp \cdot \bX', \bX)  \right)  } \dd \bX' \\
= \int_{\bX' \in \Conf(\La, \Points(\bX, \La))} \1_A(\Tm \cdot \bX') e^{- \beta\left( \FLa(\bX') + \MLa(\bX', \bX)  \right)  } \dd \bX'
\end{multline*}
and similarly for the other term. We have used here that $\Tp, \Tm$ are inverses of each other, are both area-preserving, and that applying $\Tp$ or $\Tm$ preserves the number of points in $\La$ since they map $\La$ to itself bijectively.

Since by construction $\Tm$ acts as a true translation on $\DD(\frac{L}{2})$ and since we assumed that $A$ is $\DD(\frac{L}{10})$-local, we have $\1_A(\Tm \cdot \bX') = \1_A(\bX' - \vu)$, hence using \eqref{DLR} equations and the definition of $\Ppm$ we get:
\begin{multline}
\label{CSDiffA}
\E_{\Pp}\left[ \frac{1}{\KLbeta(\bX)} \int_{\bX' \in \Conf(\La, \Points(\bX, \La))} \1_A(\Tm \cdot \bX') e^{- \beta\left( \FLa(\bX') + \MLa(\bX', \bX)  \right) } \dd \bX'  \right] \\
= \E_{\Pp} \left[ \frac{1}{\KLbeta(\bX)}  \int_{\bX' \in \Conf(\La, \Points(\bX, \La)} \1_A(\bX' - \vu) e^{- \beta\left( \FLa(\bX') + \MLa(\bX', \bX)  \right) } \dd \bX'  \right] \\
= \E_{\Pp}[\1_A(\bX' - \vu)] = \Ppm(A).
\end{multline}
A similar computation can be done with $\Tp$ instead of $\Tm$. We obtain:
\begin{equation*}
\E_\Pp\left[\1_A e^{- \beta \left(\Diff_1(\bX) + \Diff_2(\bX, \bX) \right)} \right] \leq \hal \left( \Ppm(A) + \Ppp(A) \right).
\end{equation*}

It remains to bound the second term in the right-hand side of \eqref{PpPpPm} by a constant. To do so, we rely on our assumption that $\Pp$ is “good” and apply \eqref{diffForGood}. This concludes the proof of Theorem \ref{theo:TI}.

\end{proof}
\appendix

\section{Reminders on the electric field formalism}
\label{sec:ElecFormalism}
The proof of Lemmas \ref{lem:fluct_Lipschitz}, \ref{lem:LipschitzHardWall} and \ref{lem:SmoothHardWall} require the use of the “electric” formalism for 2DOCP's as developed by Serfaty et al. We give here a concise presentation and refer to the forthcoming book \cite{serfaty2024lectures} for a deeper analysis.

\subsection{Electric fields and truncations}
We recall that $-\log$ is, up to a multiplicative constant, the solution of the Laplace equation in $\R^2$:
\begin{equation*}
- \Delta (-\log) = 2\pi \delta_0.
\end{equation*}

\begin{definition}[True electric potential and electric field]
\label{def:true}
Let $\bXN$ be a finite point configuration. We define the \textit{electric potential} $\HH^{\bXN}$ generated by $\bXN$ as the scalar field on $\R^2$ defined by:
\begin{equation*}
\HH^{\bXN}(x) := \int_{\LN} - \log |x-y| \dd (\bXN - \Leb)(y).
\end{equation*}
We let $\nHH^{\bXN}$ be the \textit{electric field} generated by $\bXN$, namely the vector field defined on $\R^2$ by:
\begin{equation*}
\nHH^{\bXN}(x) = \int_{\LN} - \nabla \log |x-y| \dd (\bXN - \Leb)(y) = \nabla \HH^{\bXN}.
\end{equation*}
\end{definition}
Note that the electric field satisfies:
\begin{equation}
\label{divnhh}
- \dive \nHH^{\bXN}(x) = 2\pi \left(\bXN - \1_{\LN} \Leb \right).
\end{equation}

\begin{definition}[Truncation and spreading out Dirac masses]
For $\eta > 0$ we let $\feta$ be the function:
\begin{equation*}
    \ff_\eta(x) := \min\left( \log \frac{|x|}{\eta}, 0\right).
\end{equation*}
For each point $x$ of $\bXN$, let $\eta(x)$ be a non-negative real number. The data of $\vec{\eta} = \{\eta(x), x \in \bXN\}$ is called a \textit{truncation vector}. We let $\HH^{\bXN}_{\veta}$ (resp. $\nHH^{\bXN}_{\veta}$) be the \textit{truncated} electric potential (resp. field) given by:
\begin{equation*}
\HH^{\bXN}_{\veta} := \HH^{\bXN} - \sum_{x \in \bXN} \ff_{\eta(x)}(\cdot - x), \quad \text{resp. } \nHH^{\bXN}_{\veta} = \nHH^{\bXN} - \sum_{x \in \bXN} \nabla \ff_{\eta(x)}(\cdot - x).
\end{equation*}
It has the effect of truncating the singularity near each point charge by \textit{spreading out} the point charge $\delta_x$.

Let $\delta^{(\eta)}_x$ be the uniform probability measure on the the circle of center $x$ and radius $\eta$, and let $\bXN^{(\eta)}$ be the spread-out measure
\begin{equation}
\label{measurespreadout}
\bXN^{(\veta)} := \sum_{x \in \bXN} \delta^{(\eta)}_x.
\end{equation}
The truncated electric field satisfies (cf. \eqref{divnhh}):
\begin{equation}
\label{divnhheta}
- \dive \nHH_{\veta}^{\bXN}(x) = 2\pi \left(\bXN^{(\veta)} - \1_{\LN} \Leb \right).
\end{equation}
\end{definition}

\begin{definition}[Nearest-neighbor distances]
Let $\bXN$ be a point configuration in $\LN$. For every point $x$ of $\bXN$ we define the “nearest-neighbor” distance $\rr(x)$ as:
\begin{equation}
\label{def:nn_distance}
\rr(x) := \frac{1}{4} \min \left( \min_{y \in \bXN, y \neq x} |x-y|, 1 \right).
\end{equation}
In particular $\rr(x)$ is always smaller than $1$. 
\end{definition}

\subsection{Global and local electric energy}
The following identity may be considered as the starting point of an approach to Coulomb gases based on electric fields and their “renormalized (electric) energy”.
\begin{proposition}[Logarithmic interaction energy as renormalized electric energy]
We see $\bXN$ as a point configuration in $\R^2$ and let $\HH^{\bXN}$ be the true electric potential generated by $\bXN$. Taking for each $x$ a truncation $\eta(x) \leq \rr(x)$, it holds:
\begin{equation*}
\FN(\bXN) = \hal \left( \frac{1}{2\pi} \int_{\R^2} |\nHH^{\bXN}_{\veta}|^2 + \sum_{x \in \bX} \log \eta(x)\right)  - \sum_{x \in \bX} \int_{\DD(x,\eta(x))} \ff_{\eta(x)}(t-x) \dd t.
\end{equation*} 
\end{proposition}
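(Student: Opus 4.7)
The statement is the classical ``renormalized electric energy'' rewriting of the logarithmic interaction, obtained by integration by parts on the divergence equation \eqref{divnhheta}. I will first record the key geometric consequence of the condition $\eta(x) \leq \rr(x)$: since $\rr(x) \leq \tfrac14 \min_{y \neq x}|x-y|$, the circles $\partial \DD(x,\eta(x))$ (on which the spread-out Dirac $\delta_x^{(\eta(x))}$ lives) are pairwise disjoint, and each such circle sees no other point $y \in \bXN$ in its interior or on its closure. This will allow free use of the mean-value property for $z \mapsto -\log|z-y|$, which is harmonic away from $y$.

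The main identity I would establish is
\begin{equation*}
\frac{1}{2\pi} \int_{\R^2} |\nHH^{\bXN}_{\vec{\eta}}|^2
= \iint -\log|z-w| \, d(\bXN^{(\vec{\eta})} - \ind_{\LN}\Leb)(z)\, d(\bXN^{(\vec{\eta})} - \ind_{\LN}\Leb)(w),
\end{equation*}
which comes from integrating $|\nHH_{\vec{\eta}}|^2 = \nHH_{\vec{\eta}} \cdot \nabla \HH_{\vec{\eta}}$ by parts and substituting \eqref{divnhheta}. The integration by parts is legitimate because $\bXN - \ind_{\LN}\Leb$ is a compactly supported neutral charge, so $\nHH^{\bXN}$ decays like $|z|^{-2}$ at infinity and $\HH^{\bXN}$ like $|z|^{-1}$; the truncation affects only a compact region and preserves this decay.

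Next I would expand the right-hand side via the decomposition $\bXN^{(\vec{\eta})} = \sum_x \delta_x^{(\eta(x))}$ and split it into: (i) the pairwise cross terms $\iint -\log|z-w|\, d\delta_x^{(\eta(x))}(z)\, d\delta_y^{(\eta(y))}(w)$ for $x \neq y$, which by the disjointness step above and two applications of the harmonic mean-value property equal $-\log|x-y|$; (ii) the self-terms $x = y$, for which a direct computation on the circle gives $\iint -\log|z-w|\, d\delta_x^{(\eta(x))\otimes 2} = -\log \eta(x)$; (iii) the cross terms between $\delta_x^{(\eta(x))}$ and $-\ind_{\LN}\Leb$, which I would rewrite as $-2 \int_{\LN} -\log|x-w|\,dw$ plus an error \emph{localized} on $\DD(x,\eta(x))$ (this is where the mean-value property fails, since $-\log|x-w|$ is singular at $w=x$, and the error is precisely $\int_{\DD(x,\eta(x))} \ff_{\eta(x)}(w-x)\,dw$, by the identity $-\log\max(|w-x|,\eta(x)) = -\log|w-x| + \ff_{\eta(x)}(w-x)$); (iv) the pure background self-term $\iint_{\LN \times \LN} -\log|z-w|\,dz\,dw$, unchanged by truncation.

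Collecting items (i)--(iv), multiplying by $\tfrac12 \cdot \frac{1}{2\pi}$ and comparing with
\begin{equation*}
\FN(\bXN) = \hal \sum_{x \neq y} -\log|x-y| - \sum_x \int_{\LN} -\log|x-w|\,dw + \hal \iint_{\LN \times \LN} -\log|z-w|\,dz\,dw,
\end{equation*}
gives the target identity after moving the $-\log \eta(x)$ and the localized correction $\int_{\DD(x,\eta(x))} \ff_{\eta(x)}$ to the appropriate side. The main technical nuisance (not an obstacle, but where care is needed) is bookkeeping in step (iii): one must carefully isolate the region $\DD(x,\eta(x))$ where the mean-value identity would fail, and express the resulting discrepancy precisely as $\int_{\DD(x,\eta(x))} \ff_{\eta(x)}(t-x)\,dt$; everything else reduces to Newton's theorem and the decay estimates making the integration by parts valid.
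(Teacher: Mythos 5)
The paper states this proposition without proof, referring to the book \cite{serfaty2024lectures} where the standard "renormalized electric energy" derivation is carried out; your approach (integration by parts on the divergence equation \eqref{divnhheta}, followed by expansion using Newton's theorem / the mean-value property for $-\log|\cdot|$ on circles) is exactly the standard one, and the outline is sound: the neutrality of $\mu^{(\veta)} := \bXN^{(\veta)} - \1_{\LN}\Leb$ gives the $|z|^{-1}$ decay of $\HH_\veta$ that justifies the integration by parts, the condition $\eta(x) \leq \rr(x)$ indeed keeps the circles $\partial\DD(x,\eta(x))$ pairwise disjoint and away from the other points so the cross terms give $-\log|x-y|$ exactly, and the self-term of $\delta_x^{(\eta)}$ is $-\log\eta(x)$.

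One item worth a careful second look is the sign of the final correction. Your identity $-\log\max(|w-x|,\eta) = -\log|w-x| + \ff_{\eta}(w-x)$ is consistent with the paper's explicit formula $\ff_\eta(x) := \min(\log\tfrac{|x|}{\eta},0)$, which makes $\ff_\eta \leq 0$. Carrying out the bookkeeping in step (iii) with this convention, the background cross-term contributes $-2\sum_x \int_{\LN} h_{\eta(x)}(w-x)\,\dd w = -2\sum_x \int_{\LN} (-\log|w-x|)\,\dd w - 2\sum_x \int_{\DD(x,\eta(x))}\ff_{\eta(x)}(w-x)\,\dd w$, and the final identity comes out as $\FN = \hal\bigl(\tfrac{1}{2\pi}\int|\nHH_\veta|^2 + \sum_x\log\eta(x)\bigr) + \sum_x\int_{\DD(x,\eta(x))}\ff_{\eta(x)}(t-x)\,\dd t$, i.e.\ with a plus sign. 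The minus sign in the proposition is consistent instead with the opposite convention $\ff_\eta := \max(\log\tfrac{\eta}{|x|},0) \geq 0$; that convention is in fact the one compatible with the paper's own claim that $\HH_\veta := \HH - \sum_x\ff_{\eta(x)}(\cdot - x)$ satisfies \eqref{divnhheta} (one needs $-\Delta\ff_\eta = 2\pi(\delta_0 - \delta_0^{(\eta)})$, which forces $\ff_\eta\geq 0$). So the discrepancy is a sign typo in the paper's reminders section, not a gap in your argument; you should just be sure that the convention you use for $\ff_\eta$ is the one compatible with both \eqref{divnhheta} and the stated proposition. A second, minor point: you should note that $\int_{\LN}\ff_{\eta(x)}(w-x)\,\dd w = \int_{\DD(x,\eta(x))}\ff_{\eta(x)}(w-x)\,\dd w$ requires $\DD(x,\eta(x))\subset\LN$, i.e.\ that the point $x$ is at distance at least $\eta(x)$ from $\partial\LN$; this should either be assumed or handled as a boundary remark.
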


\newcommand{\Ener}{\mathrm{Ener}}
We then define a notion of \emph{local electric energy} by setting, if $\Omega$ is a disk (or a square...):
\begin{equation*}
\Ener(\bXN, \Omega) := \int_{\Omega} |\nHH^{\bXN}_{\vec{\rr}}|^2.
\end{equation*}

The “local laws” of \cite{Armstrong_2021} ensure that the electric energy is always of the same order as the area of $\Omega$, in exponential moments.
\begin{proposition}[Local laws for the electric energy]
\label{prop:LL}
Let $x$ be a point in $\R^2$ and $\ell > 0$ be some lengthscale such that $(x, \ell)$ satisfies \eqref{condi:LL}. We have:
\begin{equation}
\label{eq:LocalLaw}
\log \EN\left[ \exp\left( \frac{\beta}{2} \Ener(\bXN, \DD(x, \ell)) \right) \right] \leq \Cc_\beta \ell^2.
\end{equation}
\end{proposition}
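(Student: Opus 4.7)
The plan is to invoke the main local law of \cite{Armstrong_2021} essentially verbatim. Their Theorem 1 (see also the companion exponential moment statement recorded as \cite[(1.20)]{Armstrong_2021}) provides precisely a bound of the form $\log \EN[\exp(\tfrac{\beta}{2} \int_{\DD(x,\ell)} |\nHH^{\bXN}_{\vec{\rr}}|^2)] \leq \Cc_\beta \ell^2$ for every disk $\DD(x,\ell)$ such that $(x,\ell)$ satisfies \eqref{condi:LL}. The fact that this holds all the way down to the microscopic scale $\ell \geq \rho_\beta$ is exactly the difficult content of \cite{Armstrong_2021}, substantially improving on the mesoscopic local laws of \cite{leble2017local,bauerschmidt2017local}, and we take it as a black box.

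The first bookkeeping step is to check that our scaling convention agrees with theirs: we both work in the microscopic normalization where the typical inter-particle distance is of order $1$ and the droplet $\LN$ has diameter $\sim \sqrt{N}$, so no rescaling is needed. The condition \eqref{condi:LL} we impose is the same as the condition under which \cite{Armstrong_2021} state their local laws.

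The second step is to verify that the specific choice of truncation vector $\vec{\rr}$ (nearest-neighbor distance clipped at $1$, see \eqref{def:nn_distance}) yields a renormalized energy comparable to the one used in \cite{Armstrong_2021}. Any admissible truncation $\eta(x) \leq \rr(x)$ gives an electric energy differing by a bounded additive quantity per point in $\DD(x,\ell)$ through the standard identity recalled just before the proposition, so the comparison is controlled by the number of points in a slightly larger disk. The exponential moments of this count are handled by Lemma \ref{lem:points}, so up to adjusting $\Cc_\beta$ the bounds translate from one truncation convention to the other by Hölder's inequality.

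The main obstacle is therefore nothing more than notational: there is no new analytic content to establish here, since all the hard work - the multiscale comparison to a screened reference field used by Armstrong-Serfaty to push the local laws down to the microscale - is imported. The role of Proposition \ref{prop:LL} in this appendix is simply to phrase the Armstrong-Serfaty input in the electric-field language used for the proofs of Lemmas \ref{lem:fluct_Lipschitz}, \ref{lem:LipschitzHardWall} and \ref{lem:SmoothHardWall}.
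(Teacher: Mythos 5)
Your proposal is correct and matches the paper's treatment exactly: the paper does not prove Proposition~\ref{prop:LL} either, but states it as a direct citation of the Armstrong--Serfaty local laws, with the preceding sentence (``The `local laws' of \cite{Armstrong_2021} ensure that the electric energy is always of the same order as the area of $\Omega$, in exponential moments'') playing the role of the proof. Your additional bookkeeping remarks (matching scaling conventions, and the fact that different admissible truncations $\eta(x)\leq\rr(x)$ can be compared up to a term controlled by the number of points and hence by Lemma~\ref{lem:points}) are correct and harmless, though the paper does not spell them out; the only small slip is that \cite[(1.20)]{Armstrong_2021} is the estimate for Lipschitz linear statistics rather than for the electric energy itself, but you hedge this with ``see also'' and it does not affect the argument.
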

In the sequel, we use the notation $\EnerPts$:
\begin{equation*}
\EnerPts(\bXN, \Omega) := |\Omega|^\hal \Ener(\bXN, \Omega)^\hal + \Points(\bXN, \Omega).
\end{equation*}
Note that by the local laws these two terms are of the same order $\O(|\Omega|)$ in exponential moments.

\section{Control on Lipschitz test functions}
\subsection{Deterministic bounds on fluctuations in terms of electric energy}
\label{sec:FluctEnergy}
The following lemma is the key for the controls on fluctuations of Lipschitz linear statistics stated in Section \ref{sec:ResFinite}.
\begin{lemma}
\label{lem:apriori}
\

\textbf{One variable.} Let $\varphi$ be a function in $C^1(\R^2)$ with compact support in $\DD(L)$. Let $\Omega$ be a disk containing a $1$-neighborhood of $\DD(L)$. We have the deterministic estimate:
    \begin{equation}
    \label{eq:apriori_one_var}
    \left|\int \varphi(x) \dd \left(\bXN - \1_{\LN} \Leb  \right)(x) \right| \leq \Cc |\varphi|_{\1} \times  \EnerPts(\bXN, \Omega).
    \end{equation}

\textbf{Two variables.} Let $(x,y) \mapsto \Ka(x,y)$ be a function in $C^2(\R^2 \times \R^2)$ with compact support in $\DD(L_1) \times \DD(L_2)$. Let $\Omega_1, \Omega_2$ be disks containing a $1$-neighborhood of $\DD(L_1), \DD(L_2)$. We have the deterministic estimate:
    \begin{multline}
    \label{eq:apriori_two_var}
    \left|\iint_{\R^2 \times \R^2} \Ka(x,y) \dd \left(\bXN - \1_{\LN} \Leb  \right)(x) \dd \left(\bXN - \1_{\LN} \Leb  \right)(y) \right| \\
    \leq \Cc |\Ka|_{\1 + \1} 
    \times  \EnerPts\left(\bXN, \Omega_2\right) \times \EnerPts\left(\bXN, \Omega_1\right).
    \end{multline}
\end{lemma}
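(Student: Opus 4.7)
The natural strategy is to use the electric field formalism from Appendix~\ref{sec:ElecFormalism}: express $\bXN - \1_{\LN}\Leb$ (up to a small correction) as the divergence of the truncated electric field $\nHH^{\bXN}_{\vec{\rr}}$ via \eqref{divnhheta}, then integrate by parts to transfer derivatives onto the test function, and finally apply Cauchy--Schwarz to pair $\nabla\varphi$ with $\nHH^{\bXN}_{\vec{\rr}}$ in $L^2(\Omega)$.

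For the one-variable statement \eqref{eq:apriori_one_var}, I would first split
\begin{equation*}
\int \varphi \dd\bigl(\bXN - \1_{\LN}\Leb\bigr) \;=\; \int \varphi \dd\bigl(\bXN^{(\vec{\rr})} - \1_{\LN}\Leb\bigr) \;+\; \int \varphi \dd\bigl(\bXN - \bXN^{(\vec{\rr})}\bigr),
\end{equation*}
where $\bXN^{(\vec{\rr})}$ is the spread-out measure with each truncation radius set to the nearest-neighbor distance $\rr(x)$. By \eqref{divnhheta} applied with $\veta = \vec{\rr}$, the first term equals $\tfrac{1}{2\pi}\int_{\R^2}\nabla\varphi\cdot\nHH^{\bXN}_{\vec{\rr}}\,\dd x$ (boundary terms at infinity vanish because $\varphi$ is compactly supported). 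Cauchy--Schwarz, together with $\supp\varphi\subset\DD(L)\subset\Omega$ and $\|\nHH^{\bXN}_{\vec{\rr}}\|_{L^2(\Omega)}^2 = \Ener(\bXN,\Omega)$, then bounds this piece by $\Cc|\varphi|_\1\cdot|\Omega|^{1/2}\Ener(\bXN,\Omega)^{1/2}$. For the correction term, each point $x$ of $\bXN$ contributes $\varphi(x) - \int \varphi \dd\delta_x^{(\rr(x))}$, which is bounded in absolute value by $|\varphi|_\1\,\rr(x) \leq |\varphi|_\1$ since $\rr(x)\leq 1$. Because $\delta_x^{(\rr(x))}$ is supported in $\DD(x,1)$, only points $x$ with $\dist(x,\supp\varphi)\leq 1$ can give a nonzero contribution, and by the $1$-neighborhood assumption on $\Omega$ all such points lie in $\Omega$. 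The total correction is therefore at most $|\varphi|_\1\Points(\bXN,\Omega)$, and summing the two contributions yields \eqref{eq:apriori_one_var}.

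For the two-variable statement \eqref{eq:apriori_two_var}, the plan is to iterate the one-variable bound. Introduce the partial linear statistic
\begin{equation*}
\Psi(y) := \int \Ka(x,y)\,\dd\bigl(\bXN - \1_{\LN}\Leb\bigr)(x),
\end{equation*}
which is compactly supported in $\DD(L_2)$ since $\Ka$ is. Differentiation under the integral (justified by the $C^2$ regularity of $\Ka$ and local finiteness of $\bXN + \1_{\LN}\Leb$) gives $\nabla_y\Psi(y) = \int \nabla_y\Ka(x,y)\dd(\bXN - \1_{\LN}\Leb)(x)$. For each fixed $y$, applying \eqref{eq:apriori_one_var} to each component of $\nabla_y\Ka(\cdot,y)$, whose Lipschitz seminorm in $x$ is bounded by $|\Ka|_{\1+\1}$, yields $|\nabla_y\Psi(y)|\leq \Cc|\Ka|_{\1+\1}\EnerPts(\bXN,\Omega_1)$, and hence $|\Psi|_\1\leq\Cc|\Ka|_{\1+\1}\EnerPts(\bXN,\Omega_1)$. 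A second application of \eqref{eq:apriori_one_var}, now with test function $\Psi$ and disk $\Omega_2$, produces \eqref{eq:apriori_two_var}.

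I do not anticipate a true obstacle: the argument is standard in the Coulomb gas literature once the renormalized electric energy is set up. The only place where one must be slightly careful is in tracking supports of the spread-out Dirac masses, which is precisely the role of the $1$-neighborhood hypothesis on $\Omega$ (resp.\ on $\Omega_1,\Omega_2$)---it is exactly what is needed to ensure that every point contributing to the correction $\bXN - \bXN^{(\vec{\rr})}$ is counted by $\Points(\bXN,\Omega)$.
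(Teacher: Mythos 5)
Your proof is correct and takes essentially the same approach as the paper: split the signed measure into the spread-out part, which is handled by integration by parts and Cauchy--Schwarz against the truncated electric field, plus a correction term controlled by $\Points(\bXN,\Omega)$, and then iterate for the bilinear case. The one minor difference is the choice of truncation: the paper sets $\eta(x)=0$ for points outside $\supp\nabla\varphi$ and $\eta(x)=\rr(x)$ inside, whereas you use $\veta=\vec{\rr}$ throughout, which is arguably the cleaner option since it makes $\int_\Omega|\nHH^{\bXN}_{\vec{\rr}}|^2$ literally equal to $\Ener(\bXN,\Omega)$ without having to compare two differently truncated fields on $\supp\nabla\varphi$.
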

\begin{proof}[Proof of Lemma \ref{lem:apriori}]
For a given point $x$ of the point configuration $\bX$ we define the truncation $\eta(x)$ as 
\begin{equation*}
\eta(x) = \begin{cases}
0 &  \text{if } x \notin \supp \nabla \varphi \\
\rr(x) & \text{if } x \in \supp \nabla \varphi,
\end{cases}
\end{equation*}
where $\rr$ denotes the “nearest-neighbor” distance as in \eqref{def:nn_distance}. Let us recall that, by definition, we always take $|\rr| \leq 1$. We replace each Dirac mass $\delta_x$ for $x \in \bXN$ by its “spread-out” version $\delta_{x}^{\eta(x)}$ as in \eqref{measurespreadout} and we let $\bXN^{(\veta)}$ be the “spread-out” configuration.

The measures $\bX_{\veta}$ and $\bX$ coincide outside a $1$-neighborhood of $\supp \varphi$  and the points have been spread out at a distance at most $1$, thus we may write:
\begin{equation}
\label{erreurReg}
\left|\int_{\R^2}\varphi(x) \dd \left(\bXN(x) - \bXN^{(\veta)}\right)(x) \right| \leq \Cc |\varphi|_{\1} \times \Points(\bX, \Omega).
\end{equation}

Using first the triangle inequality and then the identity \eqref{divnhheta} we may write:
\begin{multline*}
\left|\int \varphi(x) \dd \left(\bXN - \1_{\LN} \Leb  \right)(x) \right| \leq \left|\int_{\R^2} \varphi(x) \dd \left(\bXN^{\veta} - \Leb \right)(x) \right| + \Cc |\varphi|_{\1} \times \Points(\bX, \Omega) \\
\leq \Cc \left| \int_{\R^2} \varphi(x) \dive \nHH_{\veta}^{\bXN} \right| + \Cc |\varphi|_{\1} \times \Points(\bX, \Omega).
\end{multline*}
Integrating by parts and using Cauchy-Schwarz's inequality yields
\begin{multline*}
- \int_{\R^2} \varphi(x) \dive \nHH_{\veta}^{\bXN} = \int_{\supp \nabla \varphi} \nabla \varphi \cdot \nHH_{\veta}^{\bXN} \leq \left(\int_{\supp \nabla \varphi} |\nabla \varphi|^2 \right)^\hal \left(\int_{\supp \nabla \varphi} \left|\nHH_{\veta}^{\bXN}\right|^2 \right)^\hal \\
 \leq |\varphi|_{\1} |\supp \nabla \varphi|^{\hal} \times  \left(\int_{\supp \nabla \varphi} \left|\nHH_{\veta}^{\bXN}\right|^2 \right)^\hal,
\end{multline*}
which combined with \eqref{erreurReg} concludes the proof of \eqref{eq:apriori_one_var}.

The estimate for test functions of two variables is deduced by applying the previous control twice. We introduce an auxiliary map $A$ defined by:
\begin{equation*}
A : y \mapsto \int_{\LN} \Ka(x,y) \dd \left(\bXN- \Leb\right)(x).
\end{equation*}
This is clearly in $C^1_c(\R^2)$, and the quantity we are interested is nothing but 
\begin{equation*}
\int_{\LN} A(y) \dd \left(\bXN-\Leb\right)(x)
\end{equation*}
 We may apply the control \eqref{eq:apriori_one_var} and get:
\begin{equation*}
 \left|\iint_{\R^2 \times \R^2} \Ka(x,y) \dd \left(\bXN - \1_{\LN} \Leb  \right)(x) \dd \left(\bXN - \1_{\LN} \Leb  \right)(y) \right| \leq \Cc |A|_{\1} \EnerPts(\bXN, \Omega_2).
\end{equation*}
We are left to estimate the quantity $|A|_{\1}$. To do that, let us write:
\begin{equation*}
\nabla A(y) := \int_{\LN} \nabla_y \Ka(x,y) \dd \left(\bXN-\Leb\right)(x).
\end{equation*}
We are again in the position of applying \eqref{eq:apriori_one_var}:
\begin{equation*}
\left|\nabla A(y)\right| \leq \Cc |\Ka|_{1+1} \EnerPts(\bXN, \Omega_1).
\end{equation*}
This yields \eqref{eq:apriori_two_var}.
\end{proof}

Note that in the proof of Lemma \ref{lem:apriori} we never used the fact that the test functions were supported within $\LN$ or not.

\subsection{Application: proof of Lemmas \ref{lem:fluct_Lipschitz}, \ref{lem:bilinear}, \ref{lem:LipschitzHardWall}}
To prove Lemma \ref{lem:fluct_Lipschitz} we use \eqref{eq:apriori_one_var} and write:
\begin{multline*}
\log \EN \left[ \sup_{\varphi \in \Lell} e^{\frac{1}{\Cc_\beta} \Fluct^2[\varphi]}  \right] 
\leq \log \EN \left[ \sup_{\varphi \in \Lell} e^{\frac{1}{\Cc_\beta} \Cc^2 |\varphi|^2_{\1} \EnerPts(\bXN, \Omega)^2 } \right] \\
\leq \log \EN \left[ \sup_{\varphi \in \Lell} e^{\frac{1}{\Cc_\beta} \Cc^2 \ell^{-2} \left( \ell^2 \Ener(\bXN, \Omega) + \Points(\bXN, \Omega)^2 \right) } \right] \\
\leq \log \EN \left[ e^{\frac{\Cc^2}{\Cc_\beta} \Ener(\bXN, \Omega)} \right] + \log \EN \left[ e^{\frac{\Cc^2}{\Cc_\beta} \ell^{-2} \Points(\bXN, \Omega)^2} \right],
\end{multline*}
and combine this with the local laws \eqref{eq:LocalLaw} for the energy and \eqref{eq:bound_points_finite} for the number of points. The proof of Lemma \ref{lem:LipschitzHardWall} is the same. For the proof of Lemma \ref{lem:bilinear}, we use \eqref{eq:apriori_two_var} instead
\begin{multline*}
\log \EN \left[ \sup_{\Ka \in \LLLL} \exp\left( s \iint_{\LN \times \LN} \Ka(x,y) \dd \left(\bXN - \Leb  \right)(x) \dd \left(\bXN - \Leb  \right)(y) \right)  \right] 
\\
\leq \log \EN \left[ \sup_{\Ka \in \LLLL} \exp\left(  \Cc |s| |\Ka|_{\1 + \1} \times  \EnerPts\left(\bXN, \Omega_2\right) \times \EnerPts\left(\bXN, \Omega_1\right) \right) \right] \\
\leq \log \EN \left[ \exp\left(  \Cc |s| L_1^{-1} L_2^{-1} \times  \left( \frac{L^2_1}{L^2_2} \EnerPts^2\left(\bXN, \Omega_2\right) + \frac{L^2_2}{L^2_1} \EnerPts^2\left(\bXN, \Omega_1\right) \right) \right) \right] 
\\
\leq \log \EN \left[ \exp\left(  \Cc |s| \frac{L_1}{L_2^3} \EnerPts^2\left(\bXN, \Omega_2\right) \right) \right] + \log \EN \left[ \exp\left(  \Cc |s|\frac{L_2}{L_1^3} \EnerPts^2\left(\bXN, \Omega_1\right) \right) \right]. 
\end{multline*}
Since $|s| \leq \frac{L_1}{L_2}$ we may apply \eqref{eq:LocalLaw} (and Hölder's inequality) to control both terms by $\Cc_\beta |s| L_1L_2$.

\section{Radial linear statistics hitting the hard wall: proof of Lemma \ref{HardWallSmooth}}
\label{sec:ProofHardWall}
Let $\delta > 0$ to be chosen later, we can write, using the identity \eqref{divnhh}:
\begin{equation}
\label{eq:introdelta}
\Fluct[\varphi] = \frac{1}{2\pi} \int \nabla \varphi \cdot \nHH = \frac{1}{2\pi} \int_{\{\dist(x, \partial \LN) > \delta \sqrt{N} \}} \nabla \varphi \cdot \nHH + \frac{1}{2\pi} \int_{\{\dist(x, \partial \LN) \leq \delta \sqrt{N}\}} \nabla \varphi \cdot \nHH.
\end{equation}

Let us focus on the first term in the right-hand side of \eqref{eq:introdelta}. Using the rotational symmetry, write $\varphi(x) = \bphi(|x|)$ for some $\bphi : [0, + \infty) \to \R$, and work in polar coordinates:
\begin{equation*}
\frac{1}{2\pi}  \int_{\{\dist(x, \partial \La_N) > \delta\}} \nabla \varphi \cdot \nHH = \int_{r = 0}^{+\infty} \1_{|r - \RN| \geq \delta \sqrt{N}} \ \bphi'(r) \left( \frac{1}{2\pi}  \int_{x \in \partial \DD(0, r)} \nHH \cdot \vec{n} \right) \dd r,
\end{equation*}
where $\vec{n}$ is the outer normal to the disk $\DD(0, r)$. By definition of the discrepancy, we have
\begin{equation*}
\frac{1}{2\pi}  \int_{x \in \partial \DD(0, r)} \nHH \cdot \vec{n}= \Di(\bX, \DD_r)
\end{equation*}
and thus (using the fact that the discrepancy is zero outside $\LN$ as the system is globally neutral):
\begin{multline*}
\int_{\{\dist(x, \partial \LN) > \delta\}} \nabla \varphi \cdot \nHH = \int_{r = 0}^{+\infty} \1_{|r - \RN| \geq \delta \sqrt{N}}  \bphi'(r) \Di(\bX, \DD_r) \dd r = \int_{r = 0}^{\RN -  \delta \sqrt{N}}  \bphi'(r) \Di(\bX, \DD_r) \dd r \\
\leq |\varphi|_\1 \times \int_{r = 0}^{\RN -  \delta \sqrt{N}} |\Di(\bX, \DD_r)| \dd r.
\end{multline*}
The usual controls on $\Di$ found on \cite{Armstrong_2021} are not enough, we rely instead of the hyperuniformity result of \cite{leble2021two}, which guarantees that as long as $r = r(N)$ tends to infinity with $N$ while satisfying $r(N) \leq \RN - \delta \sqrt{N}$, we have: 
\begin{equation*}
\lim_{N \to \infty} \frac{\EN[|\Di(\bX, \DD_r)|] }{r} = 0,
\end{equation*}
but with a speed of convergence that depends on $\delta$. Since $|\varphi|_\1 = \O(\sqrt{N})$ by assumption, this ensures, for $\delta$ fixed, that:
\begin{equation}
\label{loindubord}
\lim_{N \to \infty} \frac{1}{N} \int_{\{\dist(x, \partial \LN) > \delta\}} \nabla \varphi \cdot \nHH = 0.
\end{equation}

We now study the second term in the right-hand side of \eqref{eq:introdelta}. First, we regularize the electric field with some arbitrary truncation $\eta > 0$ and write:
\begin{multline*}
\int_{\{\dist(x, \partial \LN) \leq \delta \sqrt{N}\}} \nabla \varphi \cdot \nHH  \leq |\varphi|_\1 \int_{\{\dist(x, \partial \LN) \leq \delta \sqrt{N}\}} |\nHH_{\veta}| + |\varphi|_\1 \times N \times |\feta|_{L^1} \\
\preceq |\varphi|_\1 \times \left(\sqrt{N} \times \delta \sqrt{N}\right)^\hal \times \left( \int_{\R^2} \left|\nHH_{\veta}\right|^2 \right) + |\varphi|_\1 \times N  \times o_\eta(1).
\end{multline*}
The global law for the electric energy ensures that $\EN\left[ \int_{\R^2}  \left|\nHH_{\veta}\right|^2  \right] \leq \Cc(\beta, \eta) \times N$. We thus obtain:
\begin{equation}
\label{presdubord}
\EN\left[  \int_{\{\dist(x, \partial \La_N) \leq \delta \sqrt{N}\}} \nabla \varphi \cdot \nHH \right] \leq \delta^\hal \Cc(\beta, \eta) \times |\varphi|_\1 \times N + |\varphi|_\1 \times N  \times o_\eta(1).
\end{equation}

Combining \eqref{eq:introdelta}, \eqref{loindubord} and \eqref{presdubord} and choosing first $\eta$ small enough, then $\delta$ small enough (depending on $\eta$) and finally sending $N$ to infinity, we see that $\limsup_{N \to \infty} \frac{1}{\sqrt{N}} \EN [|\Fluct[\varphi]|]$ can be made arbitrarily small, which concludes.

\section{Upgrading the convergence to exp-tame: proof of Lemma \ref{lem:upgrading}}
\label{sec:ProofUpgrading}
\begin{proof} We first need bounds of the type \eqref{bound:CF}  on correlation functions for the infinite-volume processes.

\textbf{1. Passing correlation upper bounds to the limit.}  It is mentioned in \cite[Rem. 1.4]{thoma2022overcrowding} that any infinite-volume weak limit point $\PI$ will enjoy the conclusion of Lemma \ref{lem:bound_CF}, let us sketch the argument here.

The correlation functions are defined (see \eqref{def:rho_k}) by using indicator functions of disks, but they can easily be bounded using smoother test functions: for all $r > 0$ we have:
\begin{equation}
\label{chimrrhok}
\EN\left[ \sum_{i_1 \neq i_2 \dots \neq i_k} \prod_{l=1}^k \chimr(x_{i_l} - y_l) \right]
\leq  \|\rho_{k, \PNbeta}\|_{L^\infty} \times r^{2k}
\end{equation}
and on the other hand, for all $y_1, \dots, y_k$:

\begin{equation}
\label{rhokchim}
\rho_{k, \PNbeta}(y_1, \dots, y_k) \leq \liminf_{r \to 0} \frac{1}{\pirk} \EN\left[ \sum_{i_1 \neq i_2 \dots \neq i_k} \prod_{l=1}^k \chipr(x_{i_l} - y_l) \right]. 
\end{equation}
For all $r > 0$ and all fixed $y_1, \dots, y_k$ in $\R^2$, the map
\begin{equation*}
\bX \mapsto \sum_{x_{i_1} \neq x_{i_2} \dots \neq x_{i_k} \in \bX} \prod_{l=1}^k \chipr(x_{i_l} - y_l)
\end{equation*}
is continuous on $\Conf$ and bounded. We may thus pass to the weak limit as $N \to \infty$ for fixed $r$ before sending $r \to 0$, using \eqref{bound:CF}, \eqref{chimrrhok} and \eqref{rhokchim}. It yields (for some constants depending on $k$ and $\beta$):
\begin{equation}
\label{bound:CF_infinite}
\sup_{y_1, \dots, y_k} \rho_{k, \PI}(y_1, \dots, y_k) \leq \Cc_k \|\rho_{k, \PNbeta}\|_{L^\infty} \leq \Cc'_k.
\end{equation} 

\newcommand{\tfk}{\tilde{f}_k}
\newcommand{\tfkep}{\tilde{f}_{k, \epsilon}}
\newcommand{\fkep}{f_{k, \epsilon}}

\textbf{2. An approximation argument.}
Let $f : \Conf \to \R$ be a local and exp-tame test function. Without loss of generality, we can assume that $f$ is $\La$-local for some disk $\La$. In order to pass to the weak limit as $N \to \infty$, we want to approximate $f$ by a continuous function on $\Conf$. 

Let $\epsilon, \delta > 0$. Let $d_\epsilon : \R^2 \to [0,1]$ be a smooth cut-off function equal to $1$ on a $\epsilon$-neighborhood of $\partial \La$ and to $0$ outside a $2\epsilon$-neighborhood of $\partial \La$. The corresponding linear statistics $T_\epsilon : \Conf \to [0, + \infty)$ defined as:
\begin{equation*}
T_\epsilon(\bX) := \sum_{x \in \bX} d_\epsilon(x)
\end{equation*}
is continuous on $\Conf$ by definition of the vague topology. Note that $T_\epsilon$ is larger than the number of points of $\bX$ in the $\epsilon$-neighborhood of $\partial \La$.

 Additionally, let $I : [0, + \infty) \to [0,1]$ be a smooth version of $x \mapsto \ind_{x = 0}$ obtained by setting $I(x) = 1$ if $|x| \leq \frac{1}{100}$ and $I(x) = 0$ if $|x| \geq \frac{1}{10}$. 

 The map $\bX \mapsto I \circ T_\epsilon(\bX)$ is continuous on $\Conf$ as the composition of continuous maps, and bounded. By construction: 
\begin{multline}
\label{PptyITep}
\begin{cases}
\text{ There is a point of $\bX$ in a $\epsilon$-neighborhood of $\partial \La$} & \implies I \circ T_\epsilon(\bX) = 0 \\ 
\text{ There is no point of $\bX$ in a $2\epsilon$-neighborhood of $\partial \La$} & \implies I \circ T_\epsilon(\bX) = 1.
\end{cases}
\end{multline}

Notice that for any $k \geq 0$, the map $\bX \mapsto \ind_{\Points(\bX, \La) = k}$ is not continuous for the weak topology (because point can move in/out through the boundary), but the map $\bX \mapsto \ind_{\Points(\bX, \La) = k} I \circ T_\epsilon(\bX)$ is continuous, because points that approach the boundary of $\partial \La$ will now be detected by $I \circ T_\epsilon$.

We now proceed in two steps:

First, observe (using the second implication in \eqref{PptyITep}) that, for some constant $\Cc_\La$ depending on (the perimeter of) $\La$:
\begin{multline*}
\left|\EN[f] - \EN[f I \circ T_\epsilon]\right| \leq |f|_{\0} \times \PNbeta\left( \text{There is at least one point of $\bX$ in a $2\epsilon$-neighborhood of $\partial \La$} \right)\\
 \leq |f|_{\0} \times \Cc_\La \epsilon \times \|\rho_{1, \PNbeta} \|_{L^\infty},
\end{multline*}
and a similar estimate holds for $\PI$. Since $\sup_{N \geq 1} \|\rho_{1, \PNbeta} \|_{L^\infty}$ and $\|\rho_{1, \PI} \|_{L^\infty}$ are finite, we obtain for some constant $\Cc$ depending on $\La, f$ etc. but uniform in $N$: 
\begin{equation}
\label{IntroduireMollif}
\left|\EN[f] - \E_{\PI}[f]\right| \leq \left|\EN[f I \circ T_\epsilon] - \E_{\PI}[f I \circ T_\epsilon]\right| + \Cc \epsilon,
\end{equation}

Next, by definition of an exp-tame function, and in view of Lemma \ref{lem:points} (finite-volume) and Lemma \ref{lem:ControlPointsInfini} (infinite-volume), there exists $K \geq 1$ such that:
\begin{equation}
\label{queuesdespoints}
\limsup_{N \to \infty} \E_{\PpNx}[|f(\bX)| \ind_{\Points(\bX, \La) \geq K}] \leq \epsilon, \quad \E_{\PI}[|f(\bX)| \ind_{\Points(\bX, \La) \geq K}] \leq \epsilon.
\end{equation}

We now proceed to the approximation argument in itself. For $k = 0, \dots, K-1$, let $f_k$ be the function $f_k := f \ind_{\Points(\bX, \La) = k}$. It can be seen as a measurable map $\tfk$ on $\La^k$, with $\tfk(x_1, \dots, x_k) := f_k(\sum_{i=1}^k \delta_{x_i})$, which is bounded ($|\tfk| \leq e^{\frac{1}{\Ccb}k^2}$, by definition of exp-tame functions). For each $k$, we may thus find a continuous function $\tfkep : \La^k \to \R$ such that $\|\tfkep - \tfk\|_{L^1(\La^k)} \leq \frac{\epsilon}{K}$. Up to making an arbitrarily small error in $L^1(\La^k)$ we can assume that $\tfkep$ is compactly supported within $\La^k$, and extend it continuously by $0$ on $(\R^2)^k \setminus \La^k$, and then turn it back into a continuous function $\fkep$ on $\Conf$ by setting:
\begin{equation*}
\fkep(\bX) := \frac{1}{k!} \sum_{x_1 \neq \dots \neq x_k \in \bX} \tfkep(x_1, \dots, x_k)
\end{equation*}
Note that, by construction and by definition of the correlation functions, we have:
\begin{equation*}
\E_{\PpNx}\left[ \left|f(\bX) \ind_{\Points(\bX, \La) = k}  - \fkep(\bX) \ind_{\Points(\bX, \La) = k} \right| \right] \leq \|\rho_{k, \PNbeta}\|_{L^{\infty}} \times \|\tfkep - \tfk\|_{L^1(\La^k)} \leq \|\rho_{k, \PNbeta}\|_{L^{\infty}} \times  \frac{\epsilon}{K}.
\end{equation*}
As a consequence, we obtain that the map defined on $\Conf$ by:
\begin{equation*}
\bX \mapsto \fkep(\bX) \ind_{\Points(\bX, \La) = k} I \circ T_\epsilon(\bX)
\end{equation*}
is continuous, bounded, and satisfies:
\begin{equation*}
\E_{\PpNx}\left[ \left|f(\bX) \ind_{\Points(\bX, \La) = k} I \circ T_\epsilon(\bX)  - \fkep(\bX) \ind_{\Points(\bX, \La) = k}  I \circ T_\epsilon(\bX) \right| \right] \leq \|\rho_{k, \PNbeta}\|_{L^{\infty}} \times  \frac{\epsilon}{K}.
\end{equation*}
Summing over $k = 0, \dots, K-1$ we may thus form a continuous, bounded function $f_\epsilon$ on $\Conf$ such that:
\begin{equation*}
\left|\E_{\PpNx}\left[\left(f(\bX)  I \circ T_\epsilon(\bX)  - f_\epsilon(\bX)\right) \ind_{\Points(\bX, \La) < K} \right] \right| \leq \Cc_K \epsilon, 
\end{equation*}
and, with the same argument, $\left|\E_{\PI}\left[ \left(f(\bX)  I \circ T_\epsilon(\bX)  - f_\epsilon(\bX)\ind_{\Points(\bX, \La) < K}\right) \right] \right| \leq \Cc_K \epsilon$. 

Combined with \eqref{IntroduireMollif} and \eqref{queuesdespoints}, and passing $\E_{\PpNx}[f_\epsilon]$ to the weak limit as $N \to \infty$, we obtain that $\EN[f]$ and $\E_{\PI}[f]$ are arbitrarily close, which concludes the proof.
\end{proof}
\clearpage

\bibliographystyle{alpha}
\bibliography{DLR2d}

\begin{thebibliography}{DHLM21}

\bibitem[ACC24]{ameur2024random}
Yacin Ameur, Christophe Charlier, and Joakim Cronvall.
\newblock Random normal matrices: eigenvalue correlations near a hard wall.
\newblock {\em Journal of Statistical Physics}, 191(8):98, 2024.

\bibitem[AS21]{Armstrong_2021}
Scott Armstrong and Sylvia Serfaty.
\newblock Local laws and rigidity for {C}oulomb gases at any temperature.
\newblock {\em The Annals of Probability}, 49(1), 2021.

\bibitem[BBNY17]{bauerschmidt2017local}
Roland Bauerschmidt, Paul Bourgade, Miika Nikula, and Horng-Tzer Yau.
\newblock Local density for two-dimensional one-component plasma.
\newblock {\em Communications in Mathematical Physics}, 356:189--230, 2017.

\bibitem[BBNY19]{MR4063572}
Roland Bauerschmidt, Paul Bourgade, Miika Nikula, and Horng-Tzer Yau.
\newblock The two-dimensional {C}oulomb plasma: quasi-free approximation and
  central limit theorem.
\newblock {\em Adv. Theor. Math. Phys.}, 23(4):841--1002, 2019.

\bibitem[BLP79]{bricmont1979equivalence}
Jean Bricmont, Joel~L Lebowitz, and Charles~E Pfister.
\newblock On the equivalence of boundary conditions.
\newblock {\em Journal of Statistical Physics}, 21:573--582, 1979.

\bibitem[BNQ19]{bufetov2019number}
Alexander~I Bufetov, Pavel~P Nikitin, and Yanqi Qiu.
\newblock On number rigidity for {P}faffian point processes.
\newblock {\em Moscow Mathematical Journal}, 19(2):217--274, 2019.

\bibitem[BQ17]{bufetov2017conditional}
Alexander~I Bufetov and Yanqi Qiu.
\newblock Conditional measures of generalized {G}inibre point processes.
\newblock {\em Journal of Functional Analysis}, 272(11):4671--4708, 2017.

\bibitem[BS17]{bufetov2017quasi}
Alexander~Igorevich Bufetov and Tomoyuki Shirai.
\newblock Quasi-symmetries and rigidity for determinantal point processes{
  associated with de Branges spaces}.
\newblock {\em Proceedings of the Japan Academy}, 93(1):1--5, 2017.

\bibitem[Buf20]{bufetov2020conditional}
Alexander~I Bufetov.
\newblock Conditional measures of determinantal point processes.
\newblock {\em Functional Analysis and Its Applications}, 54:7--20, 2020.

\bibitem[CN18]{chhaibi2018rigidity}
Reda Chhaibi and Joseph Najnudel.
\newblock Rigidity of the {S}ine-$\beta$ process.
\newblock {\em Electronic Communications in Probability}, 23, 2018.

\bibitem[DE02]{dumitriu2002matrix}
Ioana Dumitriu and Alan Edelman.
\newblock Matrix models for beta ensembles.
\newblock {\em Journal of Mathematical Physics}, 43(11):5830--5847, 2002.

\bibitem[DHLM21]{dereudre2021dlr}
David Dereudre, Adrien Hardy, Thomas Lebl{\'e}, and Myl{\`e}ne Ma{\"\i}da.
\newblock {DLR} equations and rigidity for the {S}ine-beta process.
\newblock {\em Communications on Pure and Applied Mathematics}, 74(1):172--222,
  2021.

\bibitem[DV23]{dereudre2023number}
David Dereudre and Thibaut Vasseur.
\newblock Number-rigidity and $\beta$-circular {R}iesz gas.
\newblock {\em The Annals of Probability}, 51(3):1025--1065, 2023.

\bibitem[DVJ07]{daley2007introduction}
Daryl~J Daley and David Vere-Jones.
\newblock {\em An introduction to the theory of point processes: volume II:
  general theory and structure}.
\newblock Springer Science \& Business Media, 2007.

\bibitem[Dys62]{dyson1962statistical}
Freeman~J Dyson.
\newblock Statistical theory of the energy levels of complex systems. i.
\newblock {\em Journal of Mathematical Physics}, 3(1):140--156, 1962.

\bibitem[FH99]{forrester1999exact}
Peter~J Forrester and Graeme Honner.
\newblock Exact statistical properties of the zeros of complex random
  polynomials.
\newblock {\em Journal of Physics A: Mathematical and General}, 32(16):2961,
  1999.

\bibitem[FP81]{frohlich1981absence}
J{\"u}rg Fr{\"o}hlich and Charles Pfister.
\newblock On the absence of spontaneous symmetry breaking and of crystalline
  ordering in two-dimensional systems.
\newblock {\em Comm. Math. Phys}, 81:277--298, 1981.

\bibitem[FP86]{frohlich1986absence}
J{\"u}rg Fr{\"o}hlich and Charles-Edouard Pfister.
\newblock Absence of crystalline ordering in two dimensions.
\newblock {\em Comm. Math. Phys}, 104:697--700, 1986.

\bibitem[FV18]{MR3752129}
S.~Friedli and Y.~Velenik.
\newblock {\em Statistical mechanics of lattice systems}.
\newblock Cambridge University Press, Cambridge, 2018.
\newblock A concrete mathematical introduction.

\bibitem[Geo76]{georgii1976canonical}
Hans-Otto Georgii.
\newblock Canonical and grand canonical {G}ibbs states for continuum systems.
\newblock {\em Communications in Mathematical Physics}, 48:31--51, 1976.

\bibitem[Geo99]{MR1886241}
Hans-Otto Georgii.
\newblock Translation invariance and continuous symmetries in two-dimensional
  continuum systems.
\newblock In {\em Mathematical results in statistical mechanics ({M}arseilles,
  1998)}, pages 53--69. World Sci. Publ., River Edge, NJ, 1999.

\bibitem[Geo11]{MR2807681}
Hans-Otto Georgii.
\newblock {\em Gibbs measures and phase transitions}, volume~9 of {\em De
  Gruyter Studies in Mathematics}.
\newblock Walter de Gruyter \& Co., Berlin, second edition, 2011.

\bibitem[Gin65]{ginibre1965statistical}
Jean Ginibre.
\newblock Statistical ensembles of complex, quaternion, and real matrices.
\newblock {\em Journal of Mathematical Physics}, 6(3):440--449, 1965.

\bibitem[GL17]{ghosh2017fluctuations}
Subhroshekhar Ghosh and Joel~L Lebowitz.
\newblock Fluctuations, large deviations and rigidity in hyperuniform systems:
  a brief survey.
\newblock {\em Indian Journal of Pure and Applied Mathematics}, 48:609--631,
  2017.

\bibitem[GP17]{Ghosh_2017}
Subhroshekhar Ghosh and Yuval Peres.
\newblock {Rigidity and tolerance in point processes: Gaussian zeros and
  Ginibre eigenvalues}.
\newblock {\em Duke Mathematical Journal}, 166(10), jul 2017.

\bibitem[HKPV06]{hough2006determinantal}
J~Ben Hough, Manjunath Krishnapur, Yuval Peres, and B{\'{a}}lint Vir{\'{a}}g.
\newblock Determinantal processes and independence.
\newblock {\em Probability surveys}, 3:206--229, 2006.

\bibitem[Jan82]{jancovici1982classical}
Bernard Jancovici.
\newblock Classical coulomb systems near a plane wall. i.
\newblock {\em Journal of Statistical Physics}, 28:43--65, 1982.

\bibitem[JLM93]{jancovici1993large}
B~Jancovici, Joel~L Lebowitz, and G~Manificat.
\newblock Large charge fluctuations in classical {C}oulomb systems.
\newblock {\em Journal of statistical physics}, 72:773--787, 1993.

\bibitem[KMD19]{kuijlaars2019universality}
Arno~BJ Kuijlaars and Erwin Mi{\~n}a-D{\'\i}az.
\newblock Universality for conditional measures of the sine point process.
\newblock {\em Journal of Approximation Theory}, 243:1--24, 2019.

\bibitem[KS09]{killip2009eigenvalue}
Rowan Killip and Mihai Stoiciu.
\newblock Eigenvalue statistics for cmv matrices: From poisson to clock via
  random matrix ensembles.
\newblock {\em Duke Mathematical Journal}, 146(3), 2009.

\bibitem[Leb17]{leble2017local}
Thomas Lebl{\'e}.
\newblock Local microscopic behavior for {2D Coulomb gases}.
\newblock {\em Probability Theory and Related Fields}, 169:931--976, 2017.

\bibitem[Leb21a]{leble2021clt}
Thomas Lebl{\'e}.
\newblock {CLT for Fluctuations of Linear Statistics in the Sine-beta Process}.
\newblock {\em International Mathematics Research Notices}, 2021(8):5676--5756,
  2021.

\bibitem[Leb21b]{leble2021two}
Thomas Lebl{\'e}.
\newblock The two-dimensional one-component plasma is hyperuniform.
\newblock {\em arXiv preprint arXiv:2104.05109}, 2021.

\bibitem[Lew22]{lewin2022coulomb}
Mathieu Lewin.
\newblock Coulomb and {Riesz} gases: The known and the unknown.
\newblock {\em Journal of Mathematical Physics}, 63(6), 2022.

\bibitem[LLZ24]{lambert2024law}
Gaultier Lambert, Thomas Lebl{\'e}, and Ofer Zeitouni.
\newblock Law of large numbers for the maximum of the two-dimensional {C}oulomb
  gas potential.
\newblock {\em Electronic Journal of Probability}, 29:1--36, 2024.

\bibitem[LS17]{MR3735628}
Thomas Lebl\'{e} and Sylvia Serfaty.
\newblock Large deviation principle for empirical fields of log and {R}iesz
  gases.
\newblock {\em Invent. Math.}, 210(3):645--757, 2017.

\bibitem[LS18]{leble2018fluctuations}
Thomas Lebl{\'e} and Sylvia Serfaty.
\newblock Fluctuations of two dimensional {C}oulomb gases.
\newblock {\em Geometric and Functional Analysis}, 28:443--508, 2018.

\bibitem[Osa24]{osada2024vanishing}
Hirofumi Osada.
\newblock Vanishing self-diffusivity in {G}inibre interacting brownian motions
  in two dimensions.
\newblock {\em Probability Theory and Related Fields}, pages 1--48, 2024.

\bibitem[Ric05]{richthammer2005two}
Thomas Richthammer.
\newblock Two-dimensional {G}ibbsian point processes with continuous spin
  symmetries.
\newblock {\em Stochastic processes and their applications}, 115(5):827--848,
  2005.

\bibitem[Ric07]{richthammer2007translation}
Thomas Richthammer.
\newblock Translation-invariance of two-dimensional {G}ibbsian point processes.
\newblock {\em Communications in mathematical physics}, 274:81--122, 2007.

\bibitem[Ric09]{richthammer2009translation}
Thomas Richthammer.
\newblock Translation invariance of two-dimensional {G}ibbsian systems of
  particles with internal degrees of freedom.
\newblock {\em Stochastic processes and their applications}, 119(3):700--736,
  2009.

\bibitem[Ser23]{serfaty2020gaussian}
Sylvia Serfaty.
\newblock Gaussian fluctuations and free energy expansion for {C}oulomb gases
  at any temperature.
\newblock In {\em Annales de l'Institut Henri Poincare (B) Probabilites et
  statistiques}, volume~59, pages 1074--1142, 2023.

\bibitem[Ser24]{serfaty2024lectures}
Sylvia Serfaty.
\newblock Lectures on {Coulomb and Riesz} gases.
\newblock {\em arXiv preprint arXiv:2407.21194}, 2024.

\bibitem[Suz24]{suzuki2024ergodicity}
Kohei Suzuki.
\newblock On the ergodicity of interacting particle systems under number
  rigidity.
\newblock {\em Probability Theory and Related Fields}, 188(1):583--623, 2024.

\bibitem[Tho23]{thoma2023non}
Eric Thoma.
\newblock Non-rigidity properties of the {C}oulomb gas.
\newblock {\em arXiv preprint arXiv:2303.11486}, 2023.

\bibitem[Tho24]{thoma2022overcrowding}
Eric Thoma.
\newblock Overcrowding and separation estimates for the coulomb gas.
\newblock {\em Communications on Pure and Applied Mathematics},
  77(7):3227--3276, 2024.

\bibitem[VV09]{valko2009continuum}
Benedek Valk{\'o} and B{\'a}lint Vir{\'a}g.
\newblock Continuum limits of random matrices and the {B}rownian carousel.
\newblock {\em Inventiones mathematicae}, 177:463--508, 2009.

\end{thebibliography}

\end{document}